\newtheorem{theorem}{Theorem}[section]
\newtheorem{corollary}[theorem]{Corollary}
\newtheorem{lemma}[theorem]{Lemma}
\def\Const{{\rm Const}}
\def\eps{{\varepsilon}}
\def\brc{{\bar c}}
\def\brtau{{\bar\tau}}
\def\cO{\mathcal{O}}
\def\Tt{{\widetilde t}}
\begin{document}

\title{Motion in a Random Force Field}

\author{ Dmitry Dolgopyat, Leonid Koralov}

\date{}
\maketitle

\centerline{Department of Mathematics, University of Maryland}
\centerline{College Park, MD 20742} \centerline{
dmitry@math.umd.edu, koralov@math.umd.edu}

\begin{abstract}
 We consider the motion of a particle in a random isotropic
  force field.
 Assuming that the force field arises from a Poisson field in $\mathbb{R}^d$, $d \geq 4$, and the
 initial velocity of the particle is sufficiently large, we
 describe the asymptotic behavior of the particle.
\medskip
\end{abstract}

{\it Mathematical Subject Classification:} 60K37

\section {Introduction}
\label{se1} Let $F$ be a random force field on $\mathbb{R}^d$
defined on a probability space $(\Omega', \mathcal{F}',
\mathrm{P}')$.  The motion of a particle is described by the
equation
\begin{equation} \label{large}
\ddot{X}(t) = F(X(t)),
\end{equation}
where $X(t)$ denotes the position of the particle at time $t$. Let
$V(t) = \dot{X}(t)$ be the velocity of the particle at time $t$.
As initial conditions we take $X(0) = 0$ and $V(0) = v_0 $, where
$v_0$ is a non-random vector. The force field is assumed to be
stationary and isotropic. The precise form of the force field will
be discussed below.

We shall be interested in the asymptotic behavior of $X(t)$ and
$V(t)$ as $t \rightarrow \infty$. The process $V(t)$ can be
written in the integral form as
\begin{equation} \label{vel}
V(t) = v_0 +  \int_0^t F(X(s)) d s.
\end{equation}
Formal arguments, based on the near-independence of contributions
to the integral on the right-hand side of (\ref{vel}) from
non-intersecting sub-intervals, suggest that $V(t)$ behaves as a
diffusion process, if time is re-scaled appropriately. In fact, we
shall prove that there is an event $\Omega'_{v_0}$ in the
underlying probability space $\Omega'$, such that
$\mathrm{P}'(\Omega' \setminus \Omega'_{v_0}) \rightarrow 0$ as
$|v_0| \rightarrow \infty$, and $ V(c ^3  t) /c $ converges, as $t
\rightarrow \infty$, to a diffusion process on $\Omega'_{v_0}$
(the probability measure on $\Omega'_{v_0}$ is defined by
conditioning  $\mathrm{P}'$ on the event $\Omega'_{v_0}$). In
particular, the kinetic energy of the particle will be shown to
tend to infinity as $t \rightarrow \infty$. The precise
formulation of these results will be provided in Section~\ref{mr}.

We cannot, however, expect  that  $ V(c ^3  t) /c $ converges to
the diffusion process for almost all realizations of the force
field, if $v_0$ is fixed. Indeed, depending on the assumptions
imposed on $F$, the trajectory may remain in a bounded region of
space and the velocity may remain bounded with positive
probability.

It must be noted that we must exclude the case  $F =\nabla H$,
where $H$ is a stationary field, since in this case $(X(t), V(t))$
is a Hamiltonian flow with the Hamiltonian $ \overline{H}(k,x) =
|k|^2/2 - H(x)$, and $|V(t)|^2/2 - H(X(t))$ is constant on the
solutions of (\ref{large}).

Earlier papers primarily studied the behavior of $X(t)$ and $V(t)$
on long time intervals, whose length, however, depended on
$|v_0|$, where the initial velocity $v_0$ was treated as a large
parameter. We shall assume that $v_0$ is fixed and $t$ tends to
infinity. The trade-off is that we need to exclude an event of
small but positive measure from the underlying probability space.

Let us mention some of the earlier results concerning the
long-time behavior of $X(t)$ and $V(t)$. In \cite{KP}, Kesten and
Papanicolaou considered the equation
\begin{equation} \label{small}
\ddot{x}(t) = \varepsilon F(x(t))
\end{equation}
with the initial data $x(0) = 0$ and $v(0) = \widetilde{v}$.
Certain mixing assumptions were imposed on the force field $F$. It
was shown that if $d \geq 3$, the process $v(t/\varepsilon^2)$
converges weakly to a diffusion process $\overline{v}(t)$ with the
initial data $\overline{v}(0) = \widetilde{v}$. The generator of
the limiting process can be written out explicitly. The process
$\varepsilon^2 x(t/\varepsilon^2)$ converges weakly to
$\overline{x}(t) = \int_0^t \overline{v}(s) d s$.

Note that equations (\ref{large}) and (\ref{small}) are related
via the change of variables
\begin{equation}
\label{V-F}
X(t) = x (t /\sqrt{\varepsilon}),~~v_0 =
\widetilde{v}/\sqrt{\varepsilon}.
\end{equation}
Therefore, the convergence result for $v(t/\varepsilon^2)$ can be
formulated in terms of $V(t)$ as follows: the process $ V(|v_0|^3
t) / |v_0| $ converges to a diffusion process when $v_0$ tends to
infinity in such a way that $v_0 /|v_0| = \widetilde{v}$ remains
fixed. Similarly, $X(|v_0|^3 t)/ |v_0|^4$ converges weakly to a
limiting process.

In \cite{DGL}, Durr, Goldstein, and Lebowitz extended the
convergence results to the two-dimensional case. The field $F$ was
assumed to be a gradient of $H(x) = \sum_i h(x - p_i)$, where $h$
is a smooth function with compact support, and the points $p_i$
form a Poisson field on the plane. An additional difficulty in the
two-dimensional case is that, unlike the case with $d \geq 3$,
typical trajectories of (\ref{large}) will self-intersect. In
\cite{KR2}, Komorowski and Ryzhik proved the two-dimensional
result in the case when $H$ is sufficiently mixing, but is not
necessarily generated by a Poisson field.

In \cite{KR}, Komorowski and Ryzhik considered the process
(\ref{large}) on a longer time scale. Namely, they demonstrated
that $X(|v_0|^{3+ 8 \alpha} t)/ |v_0|^{4(1 +\alpha)}$ converges to
a Brownian motion for all sufficiently small $\alpha > 0$. It was
assumed that $F = \nabla H$, where $H$ is sufficiently mixing.

Unlike the above papers, we shall consider the asymptotic behavior
of $V(t)$ when $v_0$ is  fixed and $t \rightarrow \infty$. First,
however, assume that $v_0 \rightarrow \infty$, $v_0/|v_0| =
\widetilde{v}$, and let $\overline{V}(t)$ be the limiting process
for $ V(|v_0|^3 t) / |v_0| $ as $v_0 \rightarrow \infty$. (It
satisfies the stochastic differential equation (\ref{diff1}),
below.) As has been noted by Dolgopyat and De La Llave in
\cite{DD}, the process $\overline{V}(t)$ is self-similar, that is
for $c > 0$ the process $ \overline{V}(c^3 t) /c$ satisfies the
same stochastic differential equation with initial condition $
\overline{V}(0) /c$. Therefore, for $c$ fixed, $ V(c ^3 |v_0|^3 t)
/(c  |v_0|) $ tends to the diffusion process (\ref{diff1})
starting at $\widetilde{v}/c$. If, instead, we assume that $v_0$
is large but fixed, and take the limit as $c \rightarrow \infty$,
we formally obtain that $ V(c ^3 t) /c $ tends to the diffusion
process (\ref{diff1}) starting at the origin. We remark that the diffusion
processes satisfying the self-similarity property described above are well
understood (see e.g. \cite{RY}, Section XI). In particular the fact that
this process is non-recurrent for $d>3$ plays a crucial role in our analysis.

\section{The Force Field}
\label{forfie}

Let $S_{R, m}$ be the space of smooth functions $f: \mathbb{R}^d
\rightarrow \mathbb{R}^d$ which are supported inside the ball or
radius $R$ centered at the origin and satisfy $||f||_{C^2(
\mathbb{R}^d)} \leq m$. Let
 $\mu$ be a probability measure on  $S_{R, m}$. We assume that
 $\mu$ is symmetric in the sense that if $\psi: S_{R, m}
 \rightarrow S_{R, m}$ is a mapping that maps a function $f$ into
 $-f$, then
 \begin{equation} \label{symmetr}
 \mu(U) = \mu(\psi(U))
 \end{equation}
 for any measurable set $U \subseteq S_{R, m}$.
 We also assume that $\mu$ is isotropic, that is the vectors
 $(O^{-1} f ( Ox_1),...,O^{-1} f(Ox_n))$   and $(f(x_1),...,f(x_n))$ have the
same distribution for each orthogonal matrix $O$ and points
$x_1,...,x_n \in \mathbb{R}^d$. Suppose that on a probability
space $(\Omega', \mathcal{F}', \mathrm{P}')$ we have a sequence of
functions $f_i : \Omega' \rightarrow S_{R, m}$ which are
independent and identically distributed with distribution~$\mu$.
We shall consider random vector fields $F$ of the form
\begin{equation} \label{ff}
F(x) = \sum_{i =1}^\infty f_i(x - r_i),
\end{equation}
where
 $r_i$ form a Poisson point field with unit intensity on $
\mathbb{R}^d$. We assume that the Poisson field
is independent of the sequence $f_i$. Note that the force
field~$F$ defined by (\ref{ff}) is stationary, isotropic, and has
zero mean. We shall denote the $j$-th coordinate of the vector~$F$
by~$F^j$.

\section{Formulation of the Main Result}
\label{mr}

Let $W_t$ be a standard $d$-dimensional Brownian motion. Consider
the $d$-dimensional process $\overline{V}(t)$ which satisfies the
diffusion equation
\begin{equation} \label{diff1}
d \overline{V}(t) =  \frac{1}{\sqrt{|\overline{V}(t)|}} \left(
\lambda d W_t + (\sigma -\lambda
)\frac{\overline{V}(t)}{|\overline{V}^2(t)|} (\overline{V}(t), d
W_t)\right) + \frac{((d-2)\sigma^2-(d-1)\lambda^2)
\overline{V}(t)}{ 2 |\overline{V}(t)|^3} d t,
\end{equation}
where
\begin{equation} \label{sig}
\sigma^2 = \int_{-\infty}^\infty \mathrm{E} (F^1 (0) F^1(e_1 t)) d
t,~~\lambda^2 = \int_{-\infty}^\infty \mathrm{E} (F^2 (0) F^2(e_1
t)) d t,
\end{equation}
and $e_1$ is the first coordinate vector. It is clear that the
integrals defining $\sigma^2$ and $\lambda^2$ are non-negative. We
shall require that
\begin{equation} \label{csi}
\int_{-\infty}^\infty \mathrm{E} (F^1 (0) F^1(e_1 t)) d t
> 0.
\end{equation}
Thus, the case when $F = \nabla H$, where $H$ is a stationary
random field, is excluded from consideration.
 The generator of the process $\overline{V}(t)$ is
\[
L = \frac{1}{2} \sum_{i,j =1}^d \frac{\partial}{\partial v_i}
a_{ij}(v) \frac{\partial}{\partial v_j},
\]
where
\[
a_{ij}(v) = \int_{-\infty}^\infty \mathrm{E} (F^i (0) F^j(v t)) d
t.
\]
By examining the stochastic differential equation satisfied by
$|\overline{V}(t)|^2/2$ (see formula (\ref{energy}) below), it is
it follows that
the origin is an inaccessible point for the
process $\overline{V}(t)$ if $d \geq 3$ (see \cite{RY}, Section XI).
Therefore the solution of
(\ref{diff1}) with initial condition $\overline{V}(0) \neq 0$
exists for all $t$. By the solution with the initial condition
$\overline{V}(0) = 0$ we shall mean the limit in distribution, as
$\overline{V}(0) \rightarrow 0$, of solutions with initial
condition $\overline{V}(0)$. We shall prove the following theorem.

\begin{theorem} \label{t1}
Let $F$ be a  vector field in $\mathbb{R}^d$, $d \geq 4$, given by
(\ref{ff}), which satisfies (\ref{csi}). For each sufficiently
large $v_0$ there is a set $\Omega'_{v_0}$ such that $\lim_{|v_0|
\rightarrow \infty} \mathrm{P}'(\Omega'_{v_0})=1$ and if
$\Omega'_{v_0}$ is viewed as a probability space with the measure
obtained by conditioning $P'$ on the event $\Omega'_{v_0}$, i.e.
${\mathrm{P}}_{v_0}'(A) =
{\mathrm{P}'(A)}/{\mathrm{P}'(\Omega'_{v_0})}$, then

(a)  the processes $X(t)$ and $V(t)$ tend to infinity almost
surely,

(b) the processes $V(c^3t)/c$ on $\Omega'_{v_0}$ converge in
distribution, as $c \rightarrow \infty$, to the solution
of~(\ref{diff1}) with the initial condition $\overline{V}(0) = 0$.
\end{theorem}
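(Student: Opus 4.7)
The plan is to iteratively apply a Kesten--Papanicolaou (KP) type diffusive limit and to exploit both the self-similarity of $\overline{V}$ and its transience, which holds for $d\geq 4$. The input from \cite{KP,DGL} (adapted to the Poisson model of Section~\ref{forfie}) is that for a particle started with large initial velocity $u$, the rescaled process $V(|u|^3 s)/|u|$ is uniformly close on compact $s$-intervals, in distribution, to the solution of~(\ref{diff1}) started at $u/|u|$, with quantitative error as $|u|\to\infty$. A form of such a statement, with enough control on the error to be summed in what follows, will have to be proved in earlier sections of the paper.

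To construct $\Omega'_{v_0}$, I would decompose the trajectory into epochs. Set $\tau_0=0$ and recursively let $\tau_{k+1}$ be the first time after $\tau_k$ at which $|V|$ crosses the level $2^{k+1}|v_0|$. Let $A_k$ be the event that $\tau_{k+1}-\tau_k \leq T_* (2^k|v_0|)^3$ and that $|V(t)|\in[\eta\cdot 2^k|v_0|,\,M\cdot 2^k|v_0|]$ throughout $[\tau_k,\tau_{k+1}]$, for suitable constants $\eta,M,T_*$. Applying KP on $[\tau_k,\tau_{k+1}]$ with reference velocity $V(\tau_k)$ (of magnitude $\sim 2^k|v_0|$), the event $A_k^c$ has probability at most some $\epsilon_k\to 0$ as $2^k|v_0|\to\infty$, because on $A_k^c$ the approximating diffusion $\overline{V}$ would have to perform an excursion forbidden by its transience. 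Setting $\Omega'_{v_0}=\bigcap_{k\geq 0}A_k$, a summability estimate on $\epsilon_k$ gives $\mathrm{P}'(\Omega'_{v_0})\to 1$ as $|v_0|\to\infty$.

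Conclusion (a) then follows from the construction: on $\Omega'_{v_0}$ the magnitudes $|V(\tau_k)|=2^k|v_0|$ diverge, $|V(t)|$ is uniformly bounded below between epochs, and $|X(t)|\to\infty$ follows from the transient behavior of $\overline{V}$ for $d\geq 4$ (which has an almost sure limiting direction). For (b), fix $t>0$ and $\delta\in(0,t)$. Applying the same epoch decomposition to the interval $[0,c^3\delta]$, together with the self-similar relation $\overline{V}(c^3 s)/c \stackrel{d}{=}\overline{V}(s)$ started at $\overline{V}(0)/c$, shows that $V(c^3\delta)/c$ converges in distribution as $c\to\infty$ to $\overline{V}(\delta)$ started at $0$, since the rescaled initial datum $v_0/c\to 0$. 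Conditionally on $V(c^3\delta)$, a further application of KP with reference velocity $\sim c$ on $[c^3\delta,c^3 t]$ yields that $V(c^3 s)/c$ for $s\in[\delta,t]$ converges to $\overline{V}(s)$ started at $\overline{V}(\delta)$ (with $\overline{V}$ itself started at $0$). Tightness on $[0,t]$ follows from the same epoch bounds. Letting $\delta\downarrow 0$ and invoking the definition of the solution of~(\ref{diff1}) from the origin as the distributional limit from small initial data completes the argument.

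The principal obstacle is controlling the errors in the iterated KP approximation uniformly across scales, so that $\sum_k\epsilon_k$ actually decays with $|v_0|$, while at each epoch one restarts KP from a random velocity. This demands a quantitative KP estimate with explicit dependence on the starting velocity and time window, and careful handling of the boundary behavior between epochs, where the reference velocity fluctuates and the Markov property must be invoked at a random time. The transience of $\overline V$ for $d\geq 4$ is precisely what makes both the summability of the $\epsilon_k$ and the extraction of the initial condition $\overline V(0)=0$ in part (b) succeed; without it, the whole scheme would collapse.
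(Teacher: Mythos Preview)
Your outline captures the self-similarity/transience heuristic correctly, and the epoch decomposition with doubling levels is essentially what the paper uses in Section~\ref{SSDerGr} (the process $\xi_n$). But there is a genuine gap: you repeatedly invoke ``KP on $[\tau_k,\tau_{k+1}]$ with reference velocity $V(\tau_k)$'' and speak of ``the Markov property at a random time.'' The pair $(X,V)$ is \emph{not} Markov in the sense you need. The force field $F$ is quenched; after time $\tau_k$ the trajectory may return to within distance $2R$ of its past, and then $F(X(s))$ for $s>\tau_k$ is correlated with the pre-$\tau_k$ history. The KP limit theorem relies on the particle seeing essentially fresh randomness, so you cannot simply restart it at $V(\tau_k)$ and treat the errors as summable without controlling these returns.

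This is exactly the difficulty the paper's machinery is built to handle. The paper introduces the auxiliary process $Y$ of Section~\ref{tdis}, for which the field is \emph{refreshed} at stopping times $\tau_n$ chosen so that no Poisson points lie within $2R$ of $Y(\tau_n)$; this manufactures a genuine Markov renewal structure (formula~(\ref{smp2})), and KP (Lemma~\ref{ole}) is applied to $Y$, not to $X$. The transfer back to $X$ requires Lemma~\ref{nntt}, which shows that near self-intersections of $Y$ are rare enough that $X$ and $Y$ coincide on an event of high probability; the set $\Omega'_{v_0}$ is defined through this no-self-intersection condition (see~(\ref{omga})), not through your speed-band events $A_k$. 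The summability you hope for in $\sum_k\epsilon_k$ is, in the paper, the estimate~(\ref{FinNSI}), and it comes from the self-intersection probability bound~(\ref{ede}), which in turn uses the rotational invariance of the Poisson force in an essential way (Lemma~\ref{le1}). Without something playing the role of $Y$ and Lemma~\ref{nntt}, your iterated-KP scheme has no mechanism to decouple epochs, and the argument does not close.
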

Let $E(t) = |V(t)|^2/2$ be the kinetic energy of the particle at
time $t$, and $\overline{E}(t) = |\overline{V}(t)|^2/2$, where
$\overline{V}(t)$ is the solution of (\ref{diff1}) with initial
condition $\overline{V}(0) = 0$. By the Ito formula,
$\overline{E}(t)$ is the solution of
\begin{equation} \label{energy}
d \overline{E}(t) = \sigma (2 \overline{E}(t))^{1/4} d B_t +
\frac{\sigma^2 (d-1)}{2 \sqrt{2} \overline{E}(t)} d t
\end{equation}
with the initial condition $\overline{E}(0) = 0$, where $B_t$ is a
standard one-dimensional Brownian motion.  Let
\begin{equation} \label{position}
\overline{X}(t) = \int_0^t \overline{V}(s) d s.
\end{equation}

We observe that the fact that our force is Poisson and rotation
invariant (rather than a general strongly mixing force) is
primarily used in subsection \ref{SSSI}. An alternative approach
would be to estimate the rate of convergence in the averaging
theorem (our Lemma \ref{ole}) using the techniques of \cite{DK} or
\cite{KR} but this would make the proof much more complicated.
Therefore in this paper to we consider the simplest possible force
distribution leaving the extension to more general force fields as
an open question.

\noindent {\bf Remark.} Up to a change of time by a constant
factor, $\overline{E}^{3/4}(t)$, and consequently
$|\overline{V}(t)|^{3/2}$, are Bessel processes with dimension
$2d/3$. Therefore if $d>3$, then from the properties of the Bessel
processes (see Chapter 3.3.C of \cite{KS}) it follows that $\ln
|\overline{V}(t)|$ is a diffusion process with a positive drift,
and therefore
\begin{equation}
\label{BiasUp} \mathrm{P}\left(|\overline{V}(t)| \text{ reaches }
2|v_0| \text{ before } |v_0|/2\right)
>\frac{1}{2}.
\end{equation}
 Moreover, $~\lim_{t \rightarrow \infty} |\overline{V}(t)|
= \infty~$ almost surely for $~d  >3$. We shall also see that
$~{\lim_{t \rightarrow \infty} |{V}(t)| = \infty}$ with high
probability with respect to the measure $ \mathrm{P}'$ if the
initial velocity is large (see Lemma~\ref{ffi}). These properties
will allow us to conclude that with high probability the
trajectories of $\overline{X}(t)$ and $X(t)$ do not ``come close''
to self-intersecting if the initial velocity is large (see
Lemma~\ref{nntt}). This avoidance of near self-intersections is
essential to the proof of Theorem~\ref{t1}.


Theorem~\ref{t1} immediately implies the following.
\begin{corollary} \label{t2}
Let $F$ be a  vector field in $\mathbb{R}^d$, $d \geq 4$, given by
(\ref{ff}), which satisfies (\ref{csi}). For each sufficiently
large $v_0$ there is a set $\Omega'_{v_0}$ such that $\lim_{|v_0|
\rightarrow \infty} \mathrm{P}'(\Omega'_{v_0}) = 1 $ and if
$\Omega'_{v_0}$ is viewed as a probability space with the measure
obtained by conditioning $P'$ on the event $\Omega'_{v_0}$, then

(a) the processes $E(c^3t)/c^2$ on $\Omega'_{v_0}$ converge in
distribution, as $c \rightarrow \infty$, to the solution
of~(\ref{energy}) with the initial condition $\overline{E}(0) =
0$. The processes $X(c^3 t)/c^4$ on $\Omega'_{v_0}$ converge in
distribution, as $c \rightarrow \infty$, to the process
$\overline{X}(t)$ defined by (\ref{position}).

(b) There exists a constant $\brc$ such that
$E(t)/\brc t^{2/3}$ converges in distribution to a random variable
with density
 \[
p(x)=\frac{3}{2 \Gamma(d/3)} x^{\frac{d}{2} - 1}
\exp\left({-x^{\frac{3}{2}}}\right).
\]
\end{corollary}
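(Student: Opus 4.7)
Part (a) of the corollary follows directly from Theorem~\ref{t1}(b) by the continuous mapping theorem. Since $v\mapsto |v|^2/2$ is continuous on $\mathbb{R}^d$, the convergence in distribution of the paths $V(c^3 t)/c$ to $\overline{V}(t)$ immediately yields $E(c^3 t)/c^2 = |V(c^3 t)/c|^2/2 \Rightarrow |\overline{V}(t)|^2/2 = \overline{E}(t)$. For the position statement I would rewrite
\[
\frac{X(c^3 t)}{c^4} \;=\; \frac{1}{c^4}\int_0^{c^3 t} V(s)\,ds \;=\; \int_0^t \frac{V(c^3 u)}{c}\,du,
\]
which exhibits $X(c^3\cdot)/c^4$ as the image of $V(c^3\cdot)/c$ under the continuous map $w\mapsto \int_0^{\cdot}w(s)\,ds$ on $C([0,T],\mathbb{R}^d)$ equipped with the topology of uniform convergence on compacts; a second application of the continuous mapping theorem gives convergence to $\int_0^{\cdot}\overline{V}(s)\,ds = \overline{X}$.

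For part (b), the plan is to reduce to a one-time distributional statement and then invoke the Bessel characterization of $\overline{E}$ recalled in the Remark. Specializing part (a) to $t=1$ and writing $T=c^3$ gives $E(T)/T^{2/3}\Rightarrow \overline{E}(1)$ as $T\to\infty$, so it suffices to identify the law of $\overline{E}(1)$ up to a single multiplicative constant (which will become $\brc$). To do so I would apply Ito's formula to $R(t):=\overline{E}(t)^{3/4}$ using (\ref{energy}); after cancellation this produces a one-dimensional equation of the form $dR = \alpha\,dB_t + (\beta/R)\,dt$ whose constants satisfy $2\beta/\alpha^2 + 1 = 2d/3$, identifying $R$ as the spatial rescaling $\alpha\widetilde R$ of a standard Bessel process $\widetilde R$ of dimension $n:=2d/3$ started at $0$. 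The well-known one-time density of $\widetilde R$ then yields that $R(1)$ has density proportional to $r^{n-1}\exp(-r^2/(2\alpha^2))$.

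The remaining task is a change of variables. Setting $x=(r/\kappa)^{4/3}$ for an appropriate $\kappa$ chosen so that the exponent becomes $-x^{3/2}$ transforms the density of $R(1)$ into a density for $\overline{E}(1)/\brc$ with $\brc=(2\alpha^2)^{2/3}$; the Jacobian $dr/dx$ combines with the factor $r^{n-1}$, after substituting $n=2d/3$, to give exactly $x^{d/2-1}$, and a short computation with $\int_0^\infty y^{d/2-1}e^{-y^{3/2}}dy=\tfrac{2}{3}\Gamma(d/3)$ shows that the overall normalization equals $3/(2\Gamma(d/3))$, yielding the density $p$ stated in the corollary. I expect no fundamental obstacle beyond Theorem~\ref{t1}; the only delicate point is the bookkeeping in the Ito computation and the subsequent power-law change of variables, so that the constants in $\alpha$, $\kappa$, and $\brc$ match and all three exponents $d/2-1$, $3/2$, and the $\Gamma(d/3)$ factor come out exactly as required.
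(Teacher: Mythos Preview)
Your proposal is correct and matches the paper's approach: the paper does not give a separate proof of Corollary~\ref{t2} at all, simply asserting that ``Theorem~\ref{t1} immediately implies the following,'' with the Bessel identification of $\overline{E}^{3/4}$ supplied in the subsequent Remark. You have merely spelled out the two ingredients the paper leaves implicit---the continuous mapping theorem for part~(a) and the Bessel density computation (via $R=\overline{E}^{3/4}$ and the change of variables $x=(r/\kappa)^{4/3}$) for part~(b)---and your bookkeeping, in particular $2\beta/\alpha^2+1=2d/3$ and $\brc=(2\alpha^2)^{2/3}$, checks out.
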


If $d=2$ or $3$ then the situation is more delicate since
$\overline{V}$ is recurrent. it seems that the methods of the
present paper can be modified to show that
$P(|\dot{Y}(t)|\to\infty)=0$ (in the more difficult case $d=2$
where the trajectories of $X(t)$ self sintersect the methods of
\cite{DGL, KR2} should be used). We also beleive that Theorem
\ref{t1}(b) remains valid for $d=2, 3$ (the theorem is false if
$d=1$ since in that case all orbits are periodic). However we do
not have a proof of this since the cases of large $V$ and small
$V$ need to be considered separately and new ideas are necessary
to handle the latter case.

\section{Auxiliary Processes}
\subsection{Time Discretization} \label{tdis}
 Let $X(t)$ be the solution of  (\ref{large}) with
initial conditions $X(0) = 0$, $V(0) = v_0$. We assume that the
field $F$ and, consequently, the process $X(t)$  are  defined on a
probability space $(\Omega', \mathcal{F}', \mathrm{P}')$. Assume,
momentarily, that the trajectories of $X(t)$ always ``keep
exploring" new regions of $ \mathbb{R}^d$ in the sense that for
each $t \geq 0$ the tail  of the trajectory $X(s), s \geq t+1$, is
separated from the initial part of the trajectory $X(s), s \leq
t$, by a distance larger than $2R$. Then, for large $t$, the
interval $[0,t]$ can be split into sub-intervals, such that the
contribution to the integral on the right-hand side of (\ref{vel})
from different sub-intervals are almost independent. This fact
will be helpful when proving that $V(c^3 t) /c$ converges to a
diffusion process.

We shall demonstrate that with high probability the trajectories
of the process $X(t)$ indeed have the desired property  if the
initial velocity is large.  To this end, we shall construct an
auxiliary process $Y(t)$ on a probability space $({\Omega},
{\mathcal{F}}, {\mathrm{P}})$. The process $Y(t)$ is defined as
the solution of
\begin{equation} \label{ypro}
\ddot{Y}(t) = \widetilde{F}(t, Y(t)),~~Y(0) = 0,~\dot{Y}(0) = v_0,
\end{equation}
where $ \widetilde{F}(t, x)$ can be obtained from $F(x)$ by
``switching on" new  independent versions of $F(x)$ at stopping
times $\tau_n$, as described below. Since the force field
$\widetilde{F}$ on the right-hand side of (\ref{ypro}) is
time-dependent, the increments $\dot{Y}(\tau_n) -
\dot{Y}(\tau_{n-1})$ and $\dot{Y}(\tau_k) - \dot{Y}(\tau_{k-1})$
will be almost independent if $|n - k|$ is large. This way, we
don't need to be concerned about possible self-intersections of
the process $Y(t)$ when studying the long-time behavior of the
process $\dot{Y}(t)$. Moreover, the introduction of the stopping
times $\tau_n$ will allow us to use a kind of Markov property: the
distribution of $ Y({\tau_n + \cdot}) - Y(\tau_n)$ will depend on
the events prior to $\tau_n$ only through $\dot{Y}({\tau_n})$ (see
Section~\ref{zpr} below).

On the other hand, we shall prove that the processes $X(t)$ and
$Y(t)$ will have  the same distribution if certain events with
small probabilities are excluded from their respective probability
spaces. More precisely, there are events $\Omega'_{v_0} \subseteq
\Omega'$ and ${\Omega}_{v_0} \subseteq {\Omega}$  such that the
processes $X(t,\omega') \chi_{{\Omega}'_{v_0}}(\omega')$ and
$Y(t,\omega) \chi_{{\Omega}_{v_0}}(\omega)$ have the same
distributions. The probabilities of $\Omega'_{v_0}$ and
${\Omega}_{v_0}$ tend to one when $|v_0| \rightarrow \infty$.

 Below we give a rigorous definition of the field
$\widetilde{F}(t, x)$. Roughly speaking, we follow the trajectory
$X(t)$ till time $\tau_1$ such that there are no points $r_i$, $i
\geq 1$, in the $2R$-neighborhood of $X(\tau_1)$. Then we replace
the force field $F$ by an independent version, also generated by
Poisson points with unit intensity on $ \mathbb{R}^d \setminus
B_{2R}(X(\tau_1))$, but zero intensity on $ B_{2R}(X(\tau_1))$,
where $B_{2R}(X(\tau_1))$ is the ball of radius $2R$ centered at
the $X(\tau_1)$. We can then treat $X(\tau_1)$ as the new initial
point and define the following stopping times by induction. More
precisely, let $i, n \geq 1$, and  $f^n_i$   be independent
identically distributed functions with distribution~$\mu$. Let
$F_0 = F$. Define the sequence of random fields
 $F_1,F_2,...$  as follows:
\[
F_n(x) = \sum_{i =1}^\infty f^n_i(x - r^n_i),
\]
where,  for each $n \geq 1$,  $r^n_i$ form a Poisson point field
with unit intensity on $ \mathbb{R}^d \setminus B_{2R}(0)$ and
zero intensity on $ B_{2R}(0)$, and $B_{2R}(0)$ is the ball of
radius $2R$ centered at the origin. The Poisson fields $r^0 , r^1,
r^2,...$ are assumed to be independent of each other and of
$f^n_i$ (here $r^0 = r$). We can assume that the random fields
$F_n$ are defined on a probability space $({\Omega},
{\mathcal{F}}, {\mathrm{P}})$, which is an extension of the
original probability space $(\Omega', \mathcal{F}', \mathrm{P}')$.
%
%
%
 Let $\tau_0
= 0$, $ \widetilde{F}_0 = F_0$, $Y_0 = 0$, and $v_0$ be the
initial condition for the process~$X(t)$. Assuming that
$\tau_{n-1}$, $ \widetilde{F}_{n-1}$, $Y_{n-1}$, and $v_{n-1}$
have been defined for some $n \geq 1$, we inductively define
$\tau_{n}$, $ \widetilde{F}_{n}$,  $Y_{n}$, and $v_n$. Let $y(t)$
be the solution of the equation
\[
\ddot{y}(t) = \widetilde{F}_{n-1}( y(t)),~~t \geq \tau_{n-1}
\]
with the initial conditions  $y(\tau_{n-1}) = Y_{n-1}$, $
\dot{y}(\tau_{n-1}) = v_{n-1}$. Let $l = 4R+1$. Let $\tau_n$ be
the first time after $\tau_{n-1}+ l |v_{n-1}|^{-1}$ when there are
no points $r^{n-1}_i$, $i \geq 1$, within the $2R$-neighborhood of
$y(t) - Y_{n-1}$, that is
\[
\tau_n = \inf\{t \geq \tau_{n-1}+ l |v_{n-1}|^{-1}: \inf_{i \geq
1}
 |y(t)- Y_{n-1} -r^{n-1}_i| \geq 2R \}.
\]
%
%
If $\tau_n = \infty$, then $Y_i$, $v_i$ and $
\widetilde{F}_{i}(x)$ are undefined for $i \geq n$. Otherwise,
define $Y_n = y(\tau_n)$, $v_n = \dot{y}(\tau_n)$, and $
\widetilde{F}_{n}(x) = F_n(x - Y_n)$.

Now we can set $\widetilde{F}(t, x) = \widetilde{F}_{n-1}(x)$ for
$\tau_{n-1} \leq t < \tau_n$. Then the solution $Y(t)$
of~(\ref{ypro}) satisfies $Y(\tau_n) = Y_n$ and $\dot{Y}(\tau_n) =
v_n$.  The relation of $Y(t)$ to the original process $X(t)$ is
explained by the following lemma.
\begin{lemma} \label{lh1}
Let $Y(t)$ be the solution of (\ref{ypro}) on the probability
space $({\Omega}, {\mathcal{F}}, {\mathrm{P}})$. For each
sufficiently large $v_0$ there are events $\Omega'_{v_0} \subseteq
\Omega'$ and ${\Omega}_{v_0} \subseteq {\Omega}$ with the
following properties:

(a)  $\lim_{|v_0| \rightarrow \infty} \mathrm{P}'(\Omega'_{v_0}) =
\lim_{|v_0| \rightarrow \infty} {\mathrm{P}}({\Omega}_{v_0}) =1$.

(b) The processes $X(t)$ and $Y(t)$ have the same distribution if
restricted to the spaces $\Omega'_{v_0}$ and $
 {\Omega}_{v_0} $,
respectively.

 (c) The processes $\dot{Y}(t)$ and $Y(t)$ tend to
infinity almost surely on ${\Omega}_{v_0}$.

(d) If ${\Omega}_{v_0}$ is viewed as a probability space with the
measure obtained by conditioning $P$ on the event $\Omega_{v_0}$,
then the processes $\dot{Y}(c^3t)/c$ on ${\Omega}_{v_0}$ converge
in distribution, as $c \rightarrow \infty$, to the solution
of~(\ref{diff1}) with the initial condition $\overline{V}(0) = 0$.
\end{lemma}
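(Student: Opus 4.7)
The plan is to analyze the auxiliary process $Y(t)$ in isolation first, exploiting the Markov structure at the stopping times $\tau_n$, and only at the end transfer the conclusions to $X(t)$ by a coupling on the event of no near self-intersection. By construction, the field $F_n$ is a fresh Poisson field of unit intensity on the complement of $B_{2R}(0)$, independent of the entire history $(\tau_k, Y_k, v_k)_{k \leq n}$. Hence the sequence $(Y_n, v_n)$ is a Markov chain, and by isotropy and translation invariance of $\mu$ the conditional distribution of $v_{n+1} - v_n$ given the past depends only on $|v_n|$, in an $O(d)$-equivariant way.

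For part (d), I would combine this Markov property with the averaging estimate \lemref{ole} applied to a single step. Conditionally on $v_n$, the increment $v_{n+1} - v_n$ is the integral of a freshly sampled Poisson force along a nearly straight segment of length of order $l/|v_n|$, and \lemref{ole} produces one-step drift and diffusion coefficients whose leading part, under the rescaling $v \mapsto v/c$, $t \mapsto c^3 t$, matches the coefficients of (\ref{diff1}); the coefficients reduce to the integrals $\sigma^2, \lambda^2$ of (\ref{sig}) by isotropy. A martingale CLT for Markov chains over the corresponding large number of steps in $[0, c^3 t]$ then yields convergence of $\dot Y(c^3 t)/c$ to the stated diffusion, with initial condition $\overline V(0)=0$ arising from $v_0/c \to 0$.

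For part (c), the engine is (\ref{BiasUp}): for $d \geq 4$ the limiting diffusion is transient and satisfies $\mathrm P(|\overline V|\text{ doubles before halving}) > 1/2$. Feeding this back through the one-step approximation of (d) at each dyadic scale $|v_n| \sim 2^k |v_0|$, a Borel--Cantelli argument shows that for $|v_0|$ large there is an event $\Omega_{v_0}$ with $\mathrm P(\Omega_{v_0}) \to 1$ on which $|v_n| \to \infty$, hence $|\dot Y(t)| \to \infty$; since $|Y_{n+1} - Y_n| \geq l$, also $|Y(t)| \to \infty$. For parts (a) and (b), I would intersect $\Omega_{v_0}$ with the ``no near self-intersection'' event of \lemref{nntt}, whose probability still tends to one. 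On this refined $\Omega_{v_0}$ the $2R$-neighborhoods of the successive arcs $Y([\tau_{n-1}, \tau_n])$ are pairwise disjoint, so the points of $r^n$ that influence $\widetilde F_n$ along its arc lie in a region disjoint from those influencing any $\widetilde F_{n'}$, $n' \neq n$. By the independence of a Poisson field on disjoint sets, the joint law of the effective forces seen by $Y$ along its trajectory coincides with the law of $F_0$ restricted to the same tube; setting $\Omega'_{v_0}$ to be the analogous no-near-self-intersection event for $X$ realizes $X|_{\Omega'_{v_0}}$ and $Y|_{\Omega_{v_0}}$ as having the same distribution, and the same identification also gives $\mathrm P'(\Omega'_{v_0}) \to 1$.

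The main obstacle is part (d). Since the step $v_{n+1} - v_n$ is not small relative to $v_n$ when $|v_n|$ is of order one, the averaging estimate \lemref{ole} is only useful while $|v_n|$ stays large, so one must maintain quantitative control that prevents the chain from collapsing to the small-velocity regime where averaging fails and where memory between steps reappears. This is precisely why one conditions on $\Omega_{v_0}$ and why the assumption $d \geq 4$ (i.e.\ transience of $\overline V$) is essential.
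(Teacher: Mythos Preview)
Your outline for parts (a) and (b) --- intersecting with the no-near-self-intersection event of Lemma~\ref{nntt} and coupling via Poisson independence on disjoint tubes --- is exactly what the paper does. In (c), the step ``since $|Y_{n+1}-Y_n|\geq l$, also $|Y(t)|\to\infty$'' is wrong as stated: large step lengths do not by themselves prevent recurrence. The paper instead observes that on $\Omega_{v_0}$ the no-near-self-intersection constraint rules out any finite limit point of $t\mapsto Y(t)$, which is what forces $|Y(t)|\to\infty$.

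The substantive gap is in (d). First, Lemma~\ref{ole} is not a one-step estimate: it is already the full Kesten--Papanicolaou weak convergence of $\dot Y(|v_0|^3 t)/|v_0|$ to $\overline V$ started at a unit vector. The paper never reruns a martingale CLT on the chain $(v_n)$; that is precisely the content of Lemma~\ref{ole}, imported as a black box. Second, and more seriously, even a correct martingale-problem argument would not by itself deliver the limit with the singular initial condition $\overline V(0)=0$, since the coefficients of (\ref{diff1}) blow up at the origin. The paper's device is a two-scale stopping-time argument: for small $\kappa>0$ let $\tau_{n_*}$ be the first $\tau_n$ with $|\dot Y(\tau_n)|\geq\kappa c$, set $U_{\kappa,c}(t)=\dot Y(\tau_{n_*}+c^3 t)/c$, apply Lemma~\ref{ole} at scale $\kappa c$ to get $U_{\kappa,c}\Rightarrow\overline V$ started at modulus $\kappa$, use Lemma~\ref{last} (the sharp $|\dot Y(t)|\asymp t^{1/3}$ estimate, strictly finer than the $(|v_0|+t^{1/3})^{1\pm\delta}$ bound in $D^\delta_{v_0}$) together with tightness to show that $U_{\kappa,c}$ and $\dot Y(c^3\,\cdot)/c$ are uniformly close, and finally send $\kappa\downarrow 0$. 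You correctly identify the small-$|v|$ regime as the obstacle, but ``conditioning on $\Omega_{v_0}$'' is not a substitute for this bridging step.
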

It is clear that Theorem~\ref{t1} follows from Lemma~\ref{lh1}.
%
%
 We shall study some of the properties of  $Y(t)$ in
Section~\ref{papr} and prove parts (a), (b) and (c) of
Lemma~\ref{lh1} in Section~\ref{ltbe}. We then prove part (d) of
Lemma~\ref{lh1} in Section~\ref{YY}.
\subsection{Another Auxiliary Process} \label{zpr}
Note that the distribution  of vector field $F_0$ is slightly
different from the distribution of the fields $F_n$, $n \geq 1$.
Namely, $F_0$ is a based on a Poisson field on  $ \mathbb{R}^d$,
while $F_n$, $n \geq 1$, are based on Poisson fields on  $
\mathbb{R}^d \setminus B_{2R}(0)$.

Consider the vector field $\overline{{F}}$, which is defined in
the same way as $ \widetilde{F}$, except that now we assume $F_0$
to be defined by a Poisson field with unit intensity on  $
\mathbb{R}^d \setminus B_{2R}(0)$ and zero intensity on $
B_{2R}(0)$. The process $Z(t)$ is defined as the solution of
\[
\ddot{Z}(t) = \overline{{F}}(t, Z(t)),~~Z(0) = 0,~\dot{Z}(0) =
w_0,
\]
where $w_0$ is a random vector independent of $\overline{{F}}$.
The reason to consider $Z(t)$ is  the following Markov property.

Let $ \mathcal{G}_n$ be the $\sigma$-algebra generated by $
\widetilde{F}_{i}$, $i \leq n -1$. For each $n \geq 1$, $A \in
\mathcal{B}(\mathbb{R}^d)$, and $B \in \mathcal{B}(C([0,
\infty)))$ we have:
\begin{equation} \label{smp}
 \mathrm{P}( Y({\tau_n +
\cdot}) - Y(\tau_n) \in B | \mathcal{G}_{n} )
\chi_{\{\dot{Y}({\tau_n}) \in A\}} =  \mathrm{P} (Z(\cdot) \in B)
\chi_{\{ w_0 \in A \}}~~{\rm in}~{\rm distribution,}
\end{equation}
 where the initial
velocity vector $w_0$ for the process $Z(t)$ is assumed to be
distributed as~$\dot{Y}(\tau_n)$.

If a random variable $\tau$ is such that $\tau(\omega) \in \{
\tau_1(\omega),\tau_2(\omega),...\}$ for each $\omega$, and the
set $\{\tau \leq \tau_n\}$ is $\mathcal{G}_n$-measurable for each
$n$, then from (\ref{smp}) it follows that
\begin{equation} \label{smp2}
 \mathrm{P}( Y({\tau +
\cdot}) - Y(\tau) \in B | \mathcal{G} ) \chi_{\{\dot{Y}({\tau})
\in A\}} =  \mathrm{P} (Z(\cdot) \in B) \chi_{\{ w_0 \in A
\}}~~{\rm in}~{\rm distribution,}
\end{equation}
where $\mathcal{G} = \{A \in \mathcal{F}: A \cap \{\tau \leq
\tau_n\} \in \mathcal{G}_n~~{\rm for}~{\rm each}~n\}$.

\section{Preliminaries}
In this section we recall some results about diffusion approximation for
the process $\dot{Y}(t)$ and provide bounds on probabilities of
some unlikely events.

\label{papr}
\subsection{Behavior of $Y(t)$ and $\dot{Y}(t)$  on the Time Interval $[\tau_n, \tau_{n+1}]$}
\label{SSST}
In this subsection
we shall prove that with high probability the velocity vector does
not change significantly between the times $\tau_n$ and
$\tau_{n+1}$ if $|v_n|$ is large. Therefore $Y(t)$ can be well
approximated by a straight line on this time interval.

Let $z_n(t) = Y_n +  (t - \tau_n)v_n $, that is $z_n(t)$ is the
solution of
\[
\ddot{z}_n(t) = 0,~~z_n(\tau_n) = Y_n,~\dot{z}_n(\tau_n) = v_n.
\]
Let $\eta_n$, $n \geq 1$ be the first time after $\tau_{n-1} + l
|v_{n-1}|^{-1}$ when there are no points $r^{n-1}_i$, $i \geq 1$,
within the $2R$-neighborhood of $z_{n-1}(t) - Y_{n-1}$, that is
\[
\eta_n = \inf\{t \geq \tau_{n-1}+ l|v_{n-1}|^{-1}: \inf_{i \geq 1}
 |z_{n-1}(t)- Y_{n-1} -r^{n-1}_i| \geq 2R \}.
\]
Let
\[
\xi_n(t) = \int_{\tau_n}^{t} \widetilde{F}_{n}(z_n(s)) d
s,~~~\zeta_n(t) = \int_{\tau_n}^t \xi_n(s) d s,~~t \geq \tau_n.
\]
Let us first examine the behavior of $Y(t)$ on the interval $[0,
\tau_1]$.
\begin{lemma} \label{ftt01}
For each $N$ and $\delta > 0$ we have
\begin{equation} \label{aa1}
{\mathrm{P}}\left( \tau_1 > |v_0|^{-1+\delta}\right) \leq
|v_0|^{-N},
\end{equation}
\begin{equation} \label{aa2}
{\mathrm{P}}\left( \sup_{0 \leq t \leq \tau_1} |\dot{Y}(t) - v_0|
> |v_0|^{-1 + \delta}\right) \leq |v_0|^{-N},
\end{equation}
\begin{equation} \label{uu1}
{\mathrm{P}}\left(  \sup_{0 \leq t \leq \tau_1 } |\dot{Y}(t) - v_0
-\xi_0(t)| > |v_0|^{- 3 + \delta} \right) \leq |v_0|^{-N},
\end{equation}
\begin{equation} \label{uu2}
{\mathrm{P}}\left( |\xi_0(\tau_1) - \int_0^{\eta_1}
{F}_{0}(z_0(s)) d s | > |v_0|^{-1 + \delta} \right) \leq
|v_0|^{-N},
\end{equation}
for all sufficiently large $|v_0|$.
\end{lemma}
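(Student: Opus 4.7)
The four bounds share ingredients which I record first. Pointwise $|F(x)|\leq mN(x)$, where $N(x)$ counts Poisson points $r_i^0$ in $B_R(x)$; Poisson concentration shows that on the $3R$-tube around $z_0([0,|v_0|^{-1+\delta}])$ (volume $O(|v_0|^\delta)$), one has $N(x)\leq\log|v_0|$ uniformly and total count $N_{\rm tube}\leq|v_0|^{2\delta}$, outside an event of probability $\leq|v_0|^{-N}$. Since $z_0$ traverses each Poisson point's $R$-ball in time $\leq 2R|v_0|^{-1}$,
\[
\int_0^{|v_0|^{-1+\delta}}|F(z_0(s))|\,ds\leq 2mRN_{\rm tube}|v_0|^{-1}\leq|v_0|^{-1+\delta}
\]
after renaming $\delta$. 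A Gronwall comparison transfers this to the $Y$-trajectory on $[0,\tau_1\wedge|v_0|^{-1+\delta}]$, giving $|\dot Y(t)-v_0|\leq|v_0|^{-1+\delta}$ and hence $|Y(t)-z_0(t)|\leq|v_0|^{-2+\delta}$; together with (aa1) this yields (aa2).

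For (aa1), let $G_t=\{\text{no }r_i^0\text{ within }2R\text{ of }z_0(t)\}$ and $S_t=\{\text{no }r_i^0\text{ in the shell }\{x:2R\leq|x-z_0(t)|\leq 2R+|v_0|^{-2+\delta}\}\}$; these are independent, with $\mathrm{P}(G_t)=p_0:=e^{-\mathrm{vol}(B_{2R})}>0$ and $\mathrm{P}(S_t)\geq 1/2$ for large $|v_0|$. On the good Poisson event above, if $t\leq\tau_1\wedge|v_0|^{-1+\delta}$ and $G_t\cap S_t$ holds, then no Poisson point lies within $2R+|v_0|^{-2+\delta}$ of $z_0(t)$, so (using $|Y(t)-z_0(t)|\leq|v_0|^{-2+\delta}$) none within $2R$ of $Y(t)$, forcing $\tau_1\leq t$. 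Hence $\{\tau_1>|v_0|^{-1+\delta}\}$ implies $G_t^c\cup S_t^c$ at every sample time $t$. Picking $K=c|v_0|^\delta$ sample times in $[l|v_0|^{-1},|v_0|^{-1+\delta}]$ spaced $>4R|v_0|^{-1}$ apart (independent events),
\[
\mathrm{P}(\tau_1>|v_0|^{-1+\delta})\leq(1-p_0/2)^K+|v_0|^{-N}\leq|v_0|^{-N}.
\]

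For (uu1), write $\dot Y(t)-v_0-\xi_0(t)=\int_0^t[F(Y(s))-F(z_0(s))]\,ds$; the integrand is $\leq\|\nabla F\|_\infty|Y-z_0|\leq m\log|v_0|\cdot|v_0|^{-2+\delta}$ and supported where $z_0(s)$ or $Y(s)$ is within $R$ of a Poisson point, a time-set of measure $\leq 2R|v_0|^{-1}\cdot N_{\rm tube}$. The product gives the bound $|v_0|^{-3+\delta}$ after renaming. For (uu2), the difference equals the integral of $F\circ z_0$ over a subinterval of $[0,|v_0|^{-1+\delta}]$, and so is dominated by $\int_0^{|v_0|^{-1+\delta}}|F(z_0(s))|\,ds\leq|v_0|^{-1+\delta}$ already obtained.

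The main obstacle is (aa1): the naive approach of controlling $\eta_1$ directly and then replacing it by $\tau_1$ via the discrepancy $|Y-z_0|$ gives only a polynomial rate, since the shell event at a single gap time has polynomial probability. The fix is to couple $\{\tau_1\leq t\}$ directly to the product event $G_t\cap S_t$; because the joint events at $K=c|v_0|^\delta$ well-separated sample times are independent and each has probability $\geq p_0/2$, their joint non-occurrence has probability $\leq(1-p_0/2)^K$, recovering super-polynomial decay.
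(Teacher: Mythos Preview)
Your argument is correct and in fact more elementary than the paper's. The paper first establishes a moderate-deviation (CLT-type) bound $\sup_{t\leq T_0}|\xi_0(t)|\leq|v_0|^{(\alpha-1)/2+\delta}$ by exploiting the i.i.d.\ mean-zero structure of the increments of $\xi_0$ over segments of length $3R/|v_0|$, then feeds this into Gronwall to get $|y-z_0|\leq|v_0|^{(3\alpha-1)/2+\delta}$, and for (\ref{uu1}) uses a first-order Taylor expansion of $F(y)-F(z_0)$ together with an integration by parts. You bypass all of this cancellation: the crude Poisson-counting bound $\int_0^{|v_0|^{-1+\delta}}|F(z_0)|\leq|v_0|^{-1+\delta}$ and the observation that $F(Y)-F(z_0)$ is supported on a time-set of measure $\lesssim|v_0|^{-1}N_{\rm tube}$ already yield the required exponents once the horizon is fixed at $|v_0|^{-1+\delta}$. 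For (\ref{aa1}) the paper simply asks for a $4R$-gap along $z_0$ (probability $e^{-\mathrm{vol}(B_{4R})}>0$ at each of $\sim|v_0|^\delta$ independent sites), which automatically clears the $2R$-ball around $Y(t)$ since $|Y-z_0|\ll R$; your $G_t\cap S_t$ construction achieves the same end but is slightly more elaborate than necessary---your own bound $|Y-z_0|\leq|v_0|^{-2+\delta}$ would have let you use the paper's $4R$-gap directly. What the paper's route buys is the sharper intermediate estimate on $\xi_0$, which it reuses later (see the quadratic-variation bound for the martingale (\ref{SVMT}) in the proof of Lemma~\ref{ccf}); your approach is cleaner for the lemma itself but does not furnish that estimate.
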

\proof Let $y(t)$, $t \geq 0$, be the solution of the equation
\begin{equation} \label{yyya}
\ddot{y}(t) = {F}_{0}(y(t)),~~y(0) = 0,~\dot{y}(0) = v_0,
\end{equation}
 Note that $Y(t)$ satisfies this equation on
the interval $[0,\tau_{1})$.

We shall say that an event (which depends on $v_0$) happens with
high probability if for each $N$ the probability of the complement
does not exceed $|v_0|^{-N}$ for all  sufficiently large $|v_0|$.
  Let us show
that for each $\delta
> 0$
\begin{equation} \label{pp1aa}
||{F}_{0}||_{C^2(B_{|v_0|}(0))} \leq \delta \ln |v_0|
\end{equation}
with high probability.  Recall the definition of ${F}_{0} = F$
from Section~\ref{forfie}, and note that
$||{F}_{0}||_{C^2(B_{|v_0|}(0))}$ may be larger than $\delta \ln
|v_0|$ only if there is a point $x \in B_{|v_0|}(0)$ such that the
ball of radius $R$ centered at~$x$ contains at least $[\delta \ln
|v_0|]/m$ points out of $r_i$, $i \geq 1$. The probability of this
event is easily seen to decay faster than any power of $|v_0|$
since $\delta$, $m$ and $R$ are constants and $r_i$, $i \geq 1$,
form a Poisson field with unit intensity.

Take $-1 < \alpha < 0$, which will be specified later, and  let
${T_0} = |v_0|^\alpha$.
 Let us show   that for each $\delta
> 0$
\begin{equation} \label{pp2a11}
\sup_{0 \leq t \leq T_0} |\xi_0(t)| \leq |v_0|^{(\alpha -1)/2 +
\delta}
\end{equation}
with high probability. Indeed, let
\[
\overline{\xi}^k = \xi_0(\frac{3 (k+1) R}{|v_0|}) - \xi_0(\frac{3
k R}{|v_0|}),~~k \geq 0,
\]
where $R$ was defined in Section~\ref{forfie}. Note that
$\{\overline{\xi}^{2k} \}_{k \geq 0}$ and $\{\overline{\xi}^{2k+1}
\}_{k \geq 0}$ are sequences of independent identically
distributed random variables since the force field is uncorrelated
at distances larger than $2R$. Their tails decay faster than
exponentially since $F$ is based on a Poisson random field.
Therefore, the moderate deviation bounds (see, for example,
Theorem 9.4 of \cite{Bill}) imply that
\[
\sup_{m \leq |v_0|^{\alpha+1}/3R} (|\sum_{k =0}^m
\overline{\xi}^{2k} | + |\sum_{k =0}^m \overline{\xi}^{2k+1}|)
\leq |v_0|^{-1} |v_0|^{\frac{\alpha + 1}{2}  + \delta}
\]
since the standard deviation of $\overline{\xi}^{k}$ is of order
$1/|v_0|$. This easily implies (\ref{pp2a11}) since ${T_0 =
|v_0|^\alpha}$.

From (\ref{pp2a11}) it immediately follows that for each $\delta
> 0$
\begin{equation} \label{pp211}
\sup_{0 \leq t \leq T_0} |\zeta_0(t)| \leq |v_0|^{(3\alpha -1)/2 +
\delta}
\end{equation}
with high probability.  Let $\sigma = \inf\{t: y(t) \notin
{B_{|v_0|}(0)} \}$ (with the convention that the infimum of the
empty set is $+\infty$). By (\ref{yyya}),
\[
y(t) = z_0(t) + \zeta_0(t) + \int_{0}^t \int_{0}^u (F_0(y(s)) -
F_0(z_0(s))) d s d u.
\]
 Therefore,
\[
|y(t)- z_0(t)| \leq |\zeta_0(t)| + {T_0} ||F_0||_{ C^1
(B_{|v_0|}(0))} \int_{0}^t |y(s)-z_0(s)| d s~~~{\rm for}~~0 \leq t
\leq  T_0 \wedge \sigma.
\]
By (\ref{pp1aa}) and (\ref{pp211}), this implies that for each
$\delta
> 0$,
\[
\sup_{0 \leq t \leq T_0 \wedge \sigma} |y(t) - z_0(t)| \leq
|v_0|^{(3\alpha -1)/2 + \delta} +  \delta |v_0|^{2 \alpha}  \ln
|v_0| \sup_{0 \leq t \leq T_0 \wedge \sigma} |y(t) - z_0(t)|
\]
with high probability. Since  $\delta |v_0|^{2 \alpha}  \ln |v_0|
< 1/2$ for large enough $|v_0|$, this implies that
\[
\sup_{0 \leq t \leq T_0 \wedge \sigma} |y(t) - z_0(t)| \leq 2
|v_0|^{(3\alpha -1)/2 + \delta}
\]
with high probability and, consequently, $\sigma > T_0$ with high
probability.

 Since $\delta$ was an
arbitrary positive number, we obtain that for each $\delta > 0$
\begin{equation} \label{gr1aa}
\sup_{0 \leq t \leq T_0} |y(t) - z_0(t)| \leq |v_0|^{(3\alpha
-1)/2 + \delta}
\end{equation}
with high probability. By (\ref{yyya}),
\[
\dot{y}(t) - v_0 = \xi_0(t) + \int_{0}^t (F_0(y(s)) - F_0(z_0(s)))
d s.
\]
Due to (\ref{pp2a11}) and (\ref{gr1aa}),  for each $\delta
>0$
\begin{equation} \label{gr2aa}
\sup_{0 \leq t \leq T_0} |\dot{y}(t) - v_0| \leq |v_0|^{(\alpha
-1)/2 + \delta}
\end{equation}
with high probability.
 By  the expression $\langle \nabla F_0, v
\rangle$, where $v$ is a vector, we shall mean the vector $w$ with
components $w^j = \sum_{i = 1}^d {F_0^j}_{x_i} v^i$. If $y(t) \in
B_{|v_0|}(0)$ for $0 \leq t \leq T_0$, then by the Taylor formula
\[
 \sup_{0 \leq t \leq T_0} |\dot{y}(t) - v_0 -\xi_0(t)| \leq
\]
\[
\leq  \sup_{0 \leq t \leq T_0} |\int_{0}^t \langle \nabla
F_0(z_0(s)),(y(s) -z_0(s)) \rangle d s| +  \sup_{0 \leq t \leq
T_0} \frac{1}{2} \int_{0}^t ||F_0||_{ C^2 ( B_{|v_0|}(0))} |y(s)
-z_0(s)|^2 d s.
\]
From (\ref{pp1aa}) and (\ref{gr1aa}) it follows that for each
$\delta>0$ the second term in the right-hand side does not exceed
$|v_0|^{4 \alpha - 1 + \delta}$ with high probability. To estimate
the first term we use the fact that
\[
 \sup_{0 \leq t \leq T_0} |\int_{0}^t  {F_0^j}_{x_i}(z_0(s)) d s | \leq  |v_0|^{(\alpha
-1)/2  + \delta}~,~~1 \leq i, j \leq d,
\]
with high probability. Then, after integrating by parts and using
(\ref{gr1aa}) and (\ref{gr2aa}), we obtain that the first term in
the right-hand side does not exceed $|v_0|^{2 \alpha - 1 +
\delta}$ with high probability. Therefore, for each $\delta
>0$
\begin{equation} \label{r20aa}
 \sup_{0 \leq t \leq T_0} |\dot{y}(t) - v_0 -\xi_0(t)| \leq |v_0|^{2 \alpha - 1 +
\delta}
\end{equation}
with high probability.

Note that the set $\{z_0(t), t \in [T_0/4, T_0/2]\}$ is a straight
segment of length $|v_0|^{1+\alpha}/4$, and the points $r^0_i$, $i
\geq 1$, form a Poisson field. This implies that with high
probability there is a moment of time $t \in [T_0/4, T_0/2]$ such
that there are no points $r^0_i$, $i \geq 1$, within the
$4R$-neighborhood of $z_0(t)$. Therefore,
\begin{equation} \label{teyyx1}
\eta_{1}  \leq  T_0
\end{equation}
with high probability. Moreover, from the
 proximity of $y(t)$ and $z_0(t)$
(formula (\ref{gr1aa})) and the definition of $\tau_1$ it  now
follows
\begin{equation} \label{teyy}
\tau_{1}  \leq  T_0
\end{equation}
with high probability. Since $\alpha \in (0,1)$ was arbitrary,
this implies (\ref{aa1}). Combining (\ref{teyy}) with
(\ref{gr2aa}) and (\ref{r20aa}), we obtain (\ref{aa2}) and
(\ref{uu1}), respectively. Combining (\ref{teyyx1}) and
(\ref{teyy}) with~(\ref{pp2a11}), we obtain that for arbitrary
$\delta > 0$ we have
\[
|\int_0^{\eta_1} {F}_{0}(z_0(s)) d s| +  |\xi_0(\tau_1)| \leq
|v_0|^{-1 + \delta}
\]
with high probability, which implies (\ref{uu2}). \qed
\\

\noindent
 {\bf Remark.} Obviously, the same result holds if the process $Y(t)$ is
replaced by the process $Z(t)$ with initial velocity $v_0$.
Therefore, we have the following.
\begin{corollary} \label{fttt}
For each $N$ and $\delta > 0$ there is $r > 0$ such that for each
$n$
\begin{equation} \label{bb1}
{\mathrm{P}}\left( \tau_{n+1} - \tau_n > |v_n|^{-1+\delta} |
\mathcal{G}_{n} \right) \leq |v_n|^{-N},
\end{equation}
\begin{equation} \label{bb2}
{\mathrm{P}}\left( \sup_{\tau_n \leq t \leq \tau_{n+1}}
|\dot{Y}(t) - v_n|
> |v_n|^{-1 + \delta} |
\mathcal{G}_{n}\right) \leq |v_n|^{-N},
\end{equation}
\begin{equation} \label{bb3}
{\mathrm{P}}\left(  \sup_{\tau_n \leq t \leq \tau_{n+1} }
|\dot{Y}(t) - v_n -\xi_n(t)| > |v_n|^{- 3 + \delta} |
\mathcal{G}_{n}\right) \leq |v_n|^{-N},
\end{equation}
\begin{equation} \label{bb4}
{\mathrm{P}}\left( |\xi_n(\tau_{n+1}) - \int_{\tau_n}^{\eta_{n+1}}
\widetilde{{F}}_{n}(z_n(s)) d s | > |v_n|^{-1 + \delta} |
\mathcal{G}_{n}\right) \leq |v_n|^{-N},
\end{equation}
hold almost surely on the event $|v_n| > r$.
\end{corollary}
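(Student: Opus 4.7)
The plan is to reduce the corollary to \lemref{ftt01} via the auxiliary process $Z(t)$ of \secref{zpr} and the Markov-type identity \eqref{smp}. The first step is to establish a $Z$-analogue of \lemref{ftt01}: for a non-random initial velocity $w_0$ with $|w_0|$ large, the bounds \eqref{aa1}--\eqref{uu2} hold verbatim with $v_0$ replaced by $w_0$ and the quantities $\tau_1, \eta_1, \xi_0, z_0$ interpreted in terms of $Z$ rather than $Y$. The only change in the underlying setup is that the initial Poisson field for $Z$ has been removed on $B_{2R}(0)$, but this affects none of the three ingredients used in the proof of \lemref{ftt01}: the uniform $C^2$-bound \eqref{pp1aa} still holds since the intensity remains bounded by one; the moderate deviation bound \eqref{pp2a11} still applies since the increments $\overline{\xi}^{2k}$ and $\overline{\xi}^{2k+1}$ remain i.i.d.\ with faster-than-exponential tails once $z_0(t)$ is at distance more than $2R$ from the origin, which happens for all $t > 2R/|w_0|$ (negligible compared to $T_0 = |w_0|^\alpha$ for large $|w_0|$); and the argument producing a $4R$-free window on the segment $\{z_0(t): t \in [T_0/4, T_0/2]\}$ goes through unchanged since this segment lies entirely in the region of unit intensity.

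The second step is to transfer the $Z$-estimates to $Y$ via \eqref{smp}. Each of the four events in \eqref{bb1}--\eqref{bb4} can be rewritten as $\{Y(\tau_n + \cdot) - Y(\tau_n) \in B\}$ for a Borel set $B$ in path space determined by the shifted straight-line reference $z_n(\tau_n + \cdot) - Y_n$ and the shifted force $\widetilde{F}_n(\cdot + Y_n)$; under the correspondence $w_0 \leftrightarrow v_n$, these sets $B$ are exactly the events from the $Z$-version of \lemref{ftt01}. Given $N$ and $\delta$, choose $r$ so large that the $Z$-version yields probability bounded by $|w_0|^{-N}$ whenever $|w_0| > r$. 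Taking $A = \{v \in \mathbb{R}^d: |v| > r\}$ in \eqref{smp}, we obtain, almost surely on $\{|v_n| > r\}$,
\[
\mathrm{P}\bigl(\{Y(\tau_n + \cdot) - Y(\tau_n) \in B\} \mid \mathcal{G}_n\bigr) = \mathrm{P}(Z(\cdot) \in B) \Big|_{w_0 = v_n} \leq |v_n|^{-N},
\]
which is precisely the content of \eqref{bb1}--\eqref{bb4}.

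The main obstacle is purely bookkeeping: one must match the shifted quantities $\xi_n, \zeta_n, \eta_{n+1}, \tau_{n+1}$ to their $Z$-counterparts after translating time by $\tau_n$ and position by $Y_n$, and verify that the $\mathcal{G}_n$-measurability of $v_n$ lets the conditional form of \eqref{smp} be applied path-by-path on the event $\{|v_n| > r\}$. No new analytic estimate is needed beyond the transfer of \lemref{ftt01} to $Z$.
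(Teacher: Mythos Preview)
Your proposal is correct and follows exactly the route the paper takes: the paper simply remarks that \lemref{ftt01} obviously holds for the process $Z(t)$ as well, and then invokes the Markov-type identity \eqref{smp} to pass from $Z$ to the conditional statements about $Y$ on $[\tau_n,\tau_{n+1}]$. Your write-up supplies the details the paper omits (why removing $B_{2R}(0)$ from the Poisson field does not disturb any of the estimates in the proof of \lemref{ftt01}, and the bookkeeping behind the transfer), with the only minor caveat that the events in \eqref{bb3}--\eqref{bb4} involve $\widetilde{F}_n$ and the Poisson points $r^n_i$ in addition to the path, so \eqref{smp} must be read as an identity for the joint law of the shifted process together with the shifted field---which is indeed what the construction gives.
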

Let $H_n$ be the following event
\[
H_n = \{|\dot{Y}(\tau_{n+1}) - \dot{Y}(\tau_{n}) -
\int_{\tau_n}^{\eta_{n+1}} \widetilde{{F}}_{n}(z_n(s)) d s | \leq
|v_n|^{-1 + \delta}\}.
\]
The following Lemma will be proved in the Appendix.
\begin{lemma} \label{nlnl}
For each $\delta > 0$ we have
\begin{equation}
\label{star} \mathrm{E}\left( \chi_{H_0} |\dot{Y}(\tau_{1}) - v_0
- \int_0^{\eta_1} {F}_{0}(z_0(s)) d s  | \right)\leq |v_0|^{-3 +
\delta}
\end{equation}
for all sufficiently large $|v_0|$.

For each $\delta > 0$ there is $r > 0$ such that for each $n$
\begin{equation} \label{bb5}
{\mathrm{E}}\left( \chi_{H_n}  |\dot{Y}(\tau_{n+1}) -
\dot{Y}(\tau_{n}) - \int_{\tau_n}^{\eta_{n+1}}
\widetilde{{F}}_{n}(z_n(s) d s) |  | \mathcal{G}_{n}\right) \leq
  |v_n|^{-3 + \delta}
\end{equation}
almost surely on the event $|v_n| > r$.

\end{lemma}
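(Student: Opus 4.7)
The plan is to upgrade the crude a priori bound $|Q| \leq |v_0|^{-1+\delta}$ built into the event $H_0$, where $Q := \dot{Y}(\tau_1) - v_0 - \int_0^{\eta_1} F_0(z_0(s))\,ds$, to a pointwise bound of order $|v_0|^{-3+C\delta}$ on a very high probability event, and then to absorb the residual contribution of the complementary event using $H_0$ itself. Integrating the equation of motion $\ddot Y = F_0(y)$ on $[0,\tau_1]$ and noting that $\xi_0(\tau_1) = \int_0^{\tau_1} F_0(z_0(s))\,ds$, one has the decomposition
\[
Q \;=\; \bigl[\dot{Y}(\tau_1) - v_0 - \xi_0(\tau_1)\bigr] \;+\; \int_{\eta_1}^{\tau_1} F_0(z_0(s))\,ds.
\]
Choosing $\alpha$ slightly greater than $-1$ in the proof of Lemma~\ref{ftt01}, the bound (\ref{r20aa}) gives $|\dot{Y}(\tau_1) - v_0 - \xi_0(\tau_1)| \leq |v_0|^{-3+C\delta}$ with high probability. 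So the task reduces to estimating the expectation (on $H_0$) of the correction $\int_{\eta_1}^{\tau_1} F_0(z_0(s))\,ds$.

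The key observation is that this correction is identically zero on a very high probability event. By definition of $\eta_1$, one has $|z_0(\eta_1) - r_i^0| \geq 2R$ for every $i$. Since $z_0$ moves along a straight line at speed $|v_0|$, for every $s$ with $|s-\eta_1| \leq R/|v_0|$ and every $i$ one has $|z_0(s) - r_i^0| \geq R$, and so $F_0(z_0(s)) = 0$ (each $f_i$ is supported in $B_R(0)$). Thus on the event $\{|\tau_1 - \eta_1| \leq R/|v_0|\}$ the correction vanishes pointwise. To estimate the complementary event, let $\epsilon_0 := |v_0|^{(3\alpha-1)/2+\delta}$ be the proximity estimate from (\ref{gr1aa}). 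Let $r^\ast$ be the Poisson point whose $2R$-sphere the line $z_0$ exits at $\eta_1$, and let $d^\ast$ be the distance from $r^\ast$ to that line; the radial outward speed of $z_0$ at $\eta_1$ is then $|v_0|\sqrt{1-(d^\ast/2R)^2}$, and a short geometric computation shows $|\tau_1 - \eta_1| \approx 2R\epsilon_0/(|v_0|\sqrt{4R^2 - (d^\ast)^2})$. Hence $\{|\tau_1 - \eta_1| > R/|v_0|\}$ forces $d^\ast$ into a shell of width $O(\epsilon_0^2)$ just below $2R$; since the transverse distance of a Poisson point in the tube around the trajectory has density proportional to $d^{D-2}$ on $[0,2R]$, this event has probability $O(\epsilon_0^2) = O(|v_0|^{-4+C\delta})$.

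Putting it together: let $G$ be the intersection of $\{|\tau_1 - \eta_1| \leq R/|v_0|\}$ with the high probability events from Lemma~\ref{ftt01}. On $G \cap H_0$ the above yields $|Q| \leq C|v_0|^{-3+C\delta}$ deterministically; on $G^c \cap H_0$ the defining inequality of $H_0$ gives $|Q| \leq |v_0|^{-1+\delta}$, and since $\PROB(G^c) \leq C|v_0|^{-4+C\delta}$, the contribution to the expectation is $O(|v_0|^{-5+C\delta})$. Summing gives (\ref{star}). The conditional version (\ref{bb5}) is then an immediate corollary: by the strong Markov property (\ref{smp}), the conditional law of $Y(\tau_n + \cdot) - Y(\tau_n)$ given $\mathcal{G}_n$ with $\dot{Y}(\tau_n) = v_n$ coincides with the law of $Z$ started at $v_n$, so the argument just outlined applies verbatim with $v_n$ in place of $v_0$ on the event $\{|v_n| > r\}$, with $r$ chosen so large that all the asymptotic statements of Lemma~\ref{ftt01} kick in. I expect the main difficulty to be the quantitative tangency estimate: controlling $|\tau_1 - \eta_1|$ via the Poisson geometry of the $2R$-shells crossed by $z_0$ is the only place where the specific Poisson structure of $\{r_i^0\}$ is used at a refined level, and one needs some care to handle the biased selection of $r^\ast$ among all Poisson points in the trajectory tube.
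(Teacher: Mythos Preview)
Your decomposition of $Q$ and the observation that $\int_{\eta_1}^{\tau_1} F_0(z_0(s))\,ds$ vanishes identically whenever $|\tau_1-\eta_1|\le R/|v_0|$ are both correct; the paper does not exploit this vanishing. Instead, the paper simply bounds $|F_0|$ pointwise (by $\delta\ln|v_0|$ on $B_{|v_0|}(0)$, cf.\ \eqref{pp1aa}) and reduces the whole estimate to
\[
\mathrm{E}\bigl(|\tau_1-\eta_1|\,\chi_{\{\max(\tau_1,\eta_1)\le T_0\}}\bigr)\le |v_0|^{-3+\delta},
\]
which it then proves by a dyadic decomposition in the ``depth'' parameter $q$ of the closest Poisson point to the $2R$-cylinder (the sets $S_{q,\gamma}$, $S^+_{q,\gamma}$, $U_{q,\gamma}$ and the events $\mathcal{E}^S_{q,\gamma}$, $\mathcal{E}^U_{q,\gamma}$ in the Appendix). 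So your route is genuinely different and, if it worked as stated, arguably cleaner.

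However, your tangency estimate has a real gap. You bound $|\tau_1-\eta_1|$ solely in terms of the exit geometry from the single ball $B(r^\ast,2R)$ that $z_0$ leaves at time $\eta_1$, and conclude that $\{|\tau_1-\eta_1|>R/|v_0|\}$ forces $2R-d^\ast=O(\epsilon_0^2)$. This ignores the scenario in which a \emph{second} Poisson point $r'$ sits in the thin shell $\{r:\ 2R\le |z_0(\eta_1)-r|\le 2R+\epsilon_0,\ r \text{ ahead of } z_0(\eta_1)\}$. In that case $y(\eta_1)$ may already lie in $B(r',2R)$ (or $y$ may slip from $B(r^\ast,2R)$ into $B(r',2R)$ without any clear instant), so that $\tau_1$ is delayed until $y$ traverses $B(r',2R)$; for generic $d'={\rm dist}(r',\text{line})$ this takes time of order $R/|v_0|$, regardless of $d^\ast$. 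The shell in question has volume of order $\epsilon_0$, so its Poisson probability is of order $\epsilon_0\sim|v_0|^{-2+\delta}$, not $\epsilon_0^2$. This is exactly the obstruction the paper handles with the event $\mathcal{E}^U_{q,\gamma}$ (a Poisson point near the forward ``target disk'' $\Gamma_{q,\gamma}$). Your argument, as written, does not address it, and the claimed $O(\epsilon_0^2)$ bound is therefore unjustified.

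Two further remarks. First, the weaker bound $\mathrm{P}(|\tau_1-\eta_1|>R/|v_0|)\le |v_0|^{-2+\delta}$ would already suffice for your scheme (since $|Q|\le |v_0|^{-1+\delta}$ on $H_0$), so the overreach to $\epsilon_0^2$ is not needed --- but you still have to control the ``extra ball'' scenario to get even this weaker bound. Second, $d^\ast$ is the transverse distance of the \emph{last} Poisson point determining $\eta_1$, not of a uniformly chosen one; its conditional law is biased towards small values (long chords), so the density argument ``$\propto d^{d-2}$ on $[0,2R]$'' cannot be used without further justification. You flag this selection bias at the end, but it too is left open. In short: the plan is sound and the vanishing trick is nice, but the heart of the matter --- the Poisson geometry controlling $|\tau_1-\eta_1|$ --- still requires essentially the same case analysis the paper carries out.
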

 \subsection{Behavior of $Y(t)$ and $\dot{Y}(t)$ on a Time
Interval Proportional to $|v_0|^3$}

 Recall that $x(t) = X(t/|v_0|)$ satisfies (\ref{small})
with $\varepsilon = 1/|v_0|^2$ and initial data $x(0) = 0,
\dot{x}(0) = v_0/|v_0|$. As discussed in the Introduction, the
scale on which we see the diffusion for $\dot{x}(t)$ is of order
$t \sim 1/\varepsilon^2 = |v_0|^4$. Since  $\dot{x}(|v_0| t) =
\dot{X}(t)/|v_0|$, one needs time of order $t \sim |v_0|^3$ to see
fluctuations of order one for the process $\dot{X}(t)/|v_0|$.

In this section we recall the effective equation for $\dot{Y}$ on
the scale $|v_0|^3$ and provide estimates for the probability that
$\dot{Y}$ changes much faster or much slower than expected.


For $a, r > 0$, $b > 1$, and $n \geq 0$, let
\[
\hat{\tau}^a_n =  \min_{k \geq n}\{\tau_k: \tau_k - \tau_n \geq
ar^3 \},
\]
\[
\check{\tau}^b_n = \min_{k \geq n}\{\tau_k: | \dot{Y}(t)| \notin
(r,b r) ~~{\rm for}~{\rm some}~\tau_n \leq t \leq \tau_k \},
\]
\[
\overline{\tau}_n = \min\{ \hat{\tau}^a_n, \check{\tau}^b_n \}.
\]
(As always, the minimum over the empty set is $+ \infty$.) In what
follows $a$ and $b$ will be fixed.  The constant $r$ will serve as
a large parameter, and $|v_0|$ will be assumed to be of order $r$.
Thus $\hat{\tau}^a_0$ is the first of the stopping times $\tau_k$
which is larger than $a r^3$. Roughly speaking, $\hat{\tau}^a_0$
is very close to $a r^3$. The stopping time $\check{\tau}^b_0$ is,
roughly speaking, the first time when $| \dot{Y}(t)|$ changes from
$|v_0|$ to either $r$ or $b r$ (assuming that $|v_0| \in (r, b
r)$).

%

Assume that $r$ is large and $|v_0| \in (r,b r)$. Let us first
describe the behavior of the process $Y(t)$ on the time interval
$[0, \overline{\tau}_{0}]$.
\begin{lemma} \label{fl}
For each $N$, $\delta > 0$, $a > 0$, and $b >1$,  we have
\begin{equation} \label{cc1}
{\mathrm{P}}\left( \tau_{n+1} - \tau_n > |v_n|^{-1+\delta}~{\it
for}~{\it some}~n~{\it such}~{\it that}~\tau_n < \overline{\tau}_0
\right) \leq r^{-N},
\end{equation}
\begin{equation} \label{cc2}
{\mathrm{P}}\left( \sup_{\tau_n \leq t \leq \tau_{n+1}}
|\dot{Y}(t) - v_n|
> |v_n|^{-1 + \delta}~{\it for}~{\it some}~n~{\it such}~{\it that}~\tau_n <
\overline{\tau}_0 \right) \leq r^{-N},
\end{equation}
\begin{equation} \label{cc5}
{\mathrm{P}}\left(\overline{\tau}_0 = \infty \right) \leq r^{-N}
\end{equation}
for all sufficiently large $r$ (i.e. for all $r \geq r_0$, where
$r_0$ depends on the distribution of the force field and on $N$,
$\delta$, $a$ and $b$) and all $|v_0| \in (r, b r)$.
\end{lemma}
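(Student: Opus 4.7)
The strategy is to combine the one-step bounds of Corollary~\ref{fttt} with a deterministic upper bound on how many stopping times $\tau_n$ can precede $\overline{\tau}_0$, then apply a union bound. The three estimates (cc1), (cc2) and (cc5) are then obtained in sequence, with (cc5) following formally from (cc1).

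\textbf{Step 1 (deterministic count of relevant $n$).} I would first show the a priori bound: if $\tau_n<\overline{\tau}_0$ then $n<abr^4/l$. Indeed, $\tau_n<\overline{\tau}_0\le\hat{\tau}^a_0$ implies $\tau_n<ar^3$ (else $\tau_n$ would lie in the set defining $\hat{\tau}^a_0$, forcing $\hat{\tau}^a_0\le\tau_n$). Also, for each $k\le n$, $\tau_k<\check{\tau}^b_0$ yields $|v_k|\in(r,br)$, so by construction $\tau_{k+1}-\tau_k\ge l/|v_k|>l/(br)$. Telescoping, $\tau_n>nl/(br)$, and combining the two inequalities gives $n<abr^4/l$. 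In particular the event $\{\tau_n<\overline{\tau}_0\}$ is empty surely for $n\ge\lceil abr^4/l\rceil$.

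\textbf{Step 2 (proof of (cc1) and (cc2)).} The event $\{\tau_n<\overline{\tau}_0\}$ is $\mathcal{G}_n$-measurable (it is determined by the path of $Y$ up to $\tau_n$, hence by $\widetilde{F}_0,\dots,\widetilde{F}_{n-1}$), and on it $|v_n|>r$. Fixing $N'=N+5$ and applying (bb1) of Corollary~\ref{fttt} conditionally on $\mathcal{G}_n$ and taking expectations,
\[
\mathrm{P}\bigl(\tau_{n+1}-\tau_n>|v_n|^{-1+\delta},\,\tau_n<\overline{\tau}_0\bigr)\le\mathrm{E}\bigl[r^{-N'}\chi_{\{\tau_n<\overline{\tau}_0\}}\bigr]\le r^{-N'}.
\]
Summing over $n=0,1,\dots,\lceil abr^4/l\rceil$ (Step 1 rules out larger $n$) gives a bound of order $r^{4-N'}\le r^{-N}$ for $r$ large, which is (cc1). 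The same argument applied to (bb2) in place of (bb1) yields (cc2).

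\textbf{Step 3 (proof of (cc5)).} I would argue contrapositively. On the complement of the exceptional event in (cc1), $\tau_{k+1}-\tau_k\le|v_k|^{-1+\delta}\le r^{-1+\delta}$ for every $k$ with $\tau_k<\overline{\tau}_0$. Suppose $\overline{\tau}_0=\infty$; then $\tau_k<\overline{\tau}_0$ whenever $\tau_k$ is finite. A straightforward induction starting from $\tau_0=0$ then shows that all $\tau_n$ are finite with $\tau_n\le nr^{-1+\delta}$. Choosing $n\ge ar^{4-\delta}$ gives $\tau_n\ge ar^3$, so $\hat{\tau}^a_0\le\tau_n<\infty$, contradicting $\overline{\tau}_0=\infty$. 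Hence $\{\overline{\tau}_0=\infty\}$ is contained in the exceptional event of (cc1), and (cc5) follows.

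\textbf{Main obstacle.} The one subtle ingredient is obtaining a \emph{deterministic} polynomial bound on the number of stopping times $\tau_n$ lying before $\overline{\tau}_0$; without this, summing the conditional estimates from Corollary~\ref{fttt} would not produce a bound of the form $r^{-N}$. This is precisely what the joint definition $\overline{\tau}_0=\hat{\tau}^a_0\wedge\check{\tau}^b_0$ delivers: the cap $|v_n|\le br$ (from $\check{\tau}^b_0$) forces a minimum spacing $l/(br)$ between stopping times, while the cap $\tau_n\le ar^3$ (from $\hat{\tau}^a_0$) bounds the total time, so only $O(r^4)$ stopping times can occur.
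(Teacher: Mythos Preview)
Your argument is correct and follows essentially the same route as the paper: a deterministic $O(r^4)$ bound on the number of stopping times preceding $\overline{\tau}_0$, a union bound over the one-step estimates of Lemma~\ref{ftt01}/Corollary~\ref{fttt} for (\ref{cc1})--(\ref{cc2}), and then (\ref{cc5}) from (\ref{cc1}).

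There is one small slip in Step~3: the claim ``Choosing $n\ge ar^{4-\delta}$ gives $\tau_n\ge ar^3$'' does not follow from the \emph{upper} bound $\tau_n\le nr^{-1+\delta}$ you have just derived. What you actually need is a lower bound on $\tau_n$, and that is precisely what Step~1 already provides via $\tau_{k+1}-\tau_k>l/(br)$. The cleanest fix is to bypass the quantitative claim entirely: once your induction shows every $\tau_n$ is finite, the hypothesis $\overline{\tau}_0=\infty$ forces $\tau_n<\overline{\tau}_0$ for \emph{all} $n$, which directly contradicts Step~1's conclusion that such $n$ must satisfy $n<abr^4/l$. This is exactly how the paper closes the argument.
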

\proof For  fixed $n$, the probability ${\mathrm{P}}\left(
\tau_{n+1} - \tau_n > |v_n|^{-1+\delta},~\tau_n <
\overline{\tau}_0 \right)$ is estimated from above by $r^{-N}$ due
to  (\ref{aa1}) (if $n = 0$) and (\ref{bb1}) (if $n \geq 1$). The
number of $n$ for which $ \tau_n < \overline{\tau}_0$ does not
exceed $ abr^{4}$.  Since $N$ was arbitrary, this implies
(\ref{cc1}). In the same way, (\ref{aa2}) and (\ref{bb2}) imply
(\ref{cc2}). Finally, (\ref{cc1}) implies (\ref{cc5}) again due to
the fact that $\tau_n \geq \overline{\tau}_0$ for $n > abr^{4}$.
\qed

As before, by considering $Z(t)$ instead of $Y(t)$, we obtain the
following.
\begin{corollary}
For each $N$, $\delta > 0$, $a > 0$, and $b >1$,  we have
\[
{\mathrm{P}}\left( \tau_{k+1} - \tau_k > |v_k|^{-1+\delta}~{\it
for}~{\it some}~k~{\it such}~{\it that}~\tau_n < \tau_k <
\overline{\tau}_n | \mathcal{G}_n \right) \leq r^{-N},
\]
\[
{\mathrm{P}}\left( \sup_{\tau_k \leq t \leq \tau_{k+1}}
|\dot{Y}(t) - v_k|
> |v_k|^{-1 + \delta}~{\it for}~{\it some}~k~{\it such}~{\it that}~\tau_n \leq \tau_k <
\overline{\tau}_n  | \mathcal{G}_n \right) \leq r^{-N},
\]
\[
{\mathrm{P}}\left(\overline{\tau}_n = \infty | \mathcal{G}_n
\right) \leq r^{-N}
\]
for all sufficiently large $r$ almost surely on the event $|v_n|
\in (r, b r) $.
\end{corollary}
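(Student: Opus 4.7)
The plan is to reduce the statement to the already-proved Lemma \ref{fl} via the strong Markov property (\ref{smp2}) for the auxiliary process $Z(t)$. Since $\tau_n$ is one of the stopping times $\tau_i$ and the event $\{\tau_n \leq \tau_k\}$ is trivially $\mathcal{G}_k$-measurable for every $k$, formula (\ref{smp2}) applies with $\tau = \tau_n$. It tells us that, conditional on $\mathcal{G}_n$ and on $\dot{Y}(\tau_n)=w$, the shifted process $Y(\tau_n+\cdot)-Y(\tau_n)$ has the same distribution as the auxiliary process $Z(\cdot)$ launched with initial velocity $w_0=w$.

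Under this time-shift, the family of stopping times $\{\tau_k-\tau_n\}_{k \geq n}$ is carried to the natural analogous family for $Z$, and consequently the stopping times $\hat{\tau}^a_n-\tau_n$, $\check{\tau}^b_n-\tau_n$, and $\overline{\tau}_n-\tau_n$ become the corresponding quantities $\hat{\tau}^a_0, \check{\tau}^b_0, \overline{\tau}_0$ defined for $Z$. Therefore, on the event $|v_n| \in (r,br)$, each of the three conditional probabilities in the statement equals the unconditional probability of the analogous event for $Z$ starting from initial velocity $v_n \in (r,br)$, where the randomness is over the fresh force fields $F_n, F_{n+1},\dots$.

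These unconditional probabilities for $Z$ are bounded by Lemma \ref{fl} applied to $Z$ in place of $Y$. The proof of Lemma \ref{fl} rests only on Lemma \ref{ftt01} and Corollary \ref{fttt}, both of which, by the Remark following Lemma \ref{ftt01}, hold verbatim for $Z$ (the only distributional difference is that $F_0$ is based on a Poisson field on $\mathbb{R}^d \setminus B_{2R}(0)$ rather than on $\mathbb{R}^d$, and this plays no role in any of the estimates, which are all local in nature and use only the mixing/tail properties of the Poisson field away from the origin). Since the bounds in Lemma \ref{fl} are uniform in the initial velocity $|v_0| \in (r,br)$, combining this with the reduction above yields the three inequalities, almost surely on $\{|v_n| \in (r,br)\}$, for all sufficiently large $r$.

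I do not expect any genuine obstacle here: the corollary is a routine transcription of Lemma \ref{fl} to a Markovian restart at $\tau_n$. The only point that requires a moment's attention is verifying that the compound stopping time $\overline{\tau}_n$ is measurable with respect to the filtration generated by the shifted process, so that (\ref{smp2}) applies cleanly; this is immediate because $\overline{\tau}_n$ is defined from $\dot{Y}(t)$ on $[\tau_n,\infty)$ and from the stopping times $\tau_k$ with $k \geq n$, all of which are functionals of $Y(\tau_n+\cdot)-Y(\tau_n)$ together with $\dot{Y}(\tau_n)$. \qed
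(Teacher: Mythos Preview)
Your proposal is correct and follows exactly the approach intended by the paper, which states the corollary with only the one-line justification ``As before, by considering $Z(t)$ instead of $Y(t)$, we obtain the following.'' You have simply unpacked this remark: use the Markov property (\ref{smp2}) at $\tau_n$ to identify the shifted process with $Z$ started at $v_n$, and then invoke Lemma~\ref{fl} (valid for $Z$ by the Remark after Lemma~\ref{ftt01}) uniformly over $|v_0|\in(r,br)$.
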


\begin{lemma} \label{ole}
Assume
that $v_0 = (|v_0|,0,...,0)$, and $|v_0| \rightarrow \infty$. Then
both families of processes ${  \dot{Y}( |v_0|^3 t)}/{|v_0|}$ and
${ \dot{Z}( |v_0|^3 t)}/{|v_0|}$ converge weakly to the diffusion
process $\overline{V}(t)$ given by (\ref{diff1}) starting at
$(1,0,...,0)$.
\end{lemma}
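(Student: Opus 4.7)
The plan is to prove convergence by the standard diffusion approximation framework applied to the discrete-time Markov chain $\{v_n\}$ obtained by sampling at the stopping times $\tau_n$. The Markov property (\ref{smp2}) together with the isotropy of the force field makes $\{v_n\}$ a time-homogeneous Markov chain whose transition kernel depends only on $|v_n|$ (modulo rotations). I will give the argument for $Y$; the conclusion for $Z$ follows verbatim since the laws of $Y$ and $Z$ with the same initial velocity differ only through the distribution of $F_0$ in a ball of fixed radius $2R$, a discrepancy which becomes invisible under the $r^{-1}$ spatial rescaling as $r\to\infty$.

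First I would compute the one-step conditional mean and covariance of $\Delta v_n = v_{n+1}-v_n$ given $\mathcal{G}_n$ as functions of $v_n$ alone. Using Lemma \ref{nlnl} to replace $\Delta v_n$ by $\int_{\tau_n}^{\eta_{n+1}} \widetilde F_n(z_n(s))\,ds$ up to error $|v_n|^{-3+\delta}$, and then rescaling the arc-length parameter by $|v_n|$, one obtains
\begin{align*}
\mathrm{E}[\Delta v_n \mid v_n] &\approx |v_n|^{-3}\, b(v_n/|v_n|),\\
\mathrm{E}[\Delta v_n \otimes \Delta v_n \mid v_n] &\approx |v_n|^{-2}\, A(v_n/|v_n|),
\end{align*}
with higher moments negligible. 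By isotropy, $A(v/|v|)$ has $v$ as an eigenvector with eigenvalue $\sigma^2$ and all transverse eigenvalues equal to $\lambda^2$ (matching the defining formulas (\ref{sig})), and $b(v/|v|)$ is a scalar multiple of $v/|v|$. The covariance thus matches the diffusion matrix of (\ref{diff1}) directly. The drift $b$ is a second-order Ito-type correction: expanding $y(s)=z(s)+\int_{\tau_n}^s\int_{\tau_n}^u F(z(w))\,dw\,du+\ldots$ inside $\int F(y(s))\,ds$, integrating by parts, and averaging with isotropy isolates the constant $((d-2)\sigma^2-(d-1)\lambda^2)/2$ required by (\ref{diff1}).

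Second, rescale time and space by setting $\widetilde V_r(s)=\dot Y(r^3s)/r$ and $\widetilde v_n=v_n/r$. A single step of $\widetilde v_n$ takes rescaled time $\Delta s=(\tau_{n+1}-\tau_n)/r^3\approx r^{-4}|\widetilde v_n|^{-1}$, so that the one-step mean and covariance divided by $\Delta s$ converge to the drift vector and diffusion matrix of the generator $L$ of (\ref{diff1}). A standard theorem on weak convergence of Markov chains to diffusions (via the martingale problem, e.g.\ Stroock--Varadhan or Ethier--Kurtz) then gives convergence of the embedded discrete process $\widetilde v_{n(s)}$ (with $n(s)=\max\{n:\tau_n\le r^3s\}$) to $\overline V(s)$. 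Estimate (\ref{bb2}) from Corollary \ref{fttt} bridges from the discrete skeleton to the continuous-time process, since $|\dot Y(t)-v_n|\le|v_n|^{-1+\delta}$ on $[\tau_n,\tau_{n+1}]$ with high probability, i.e.\ $o(1)$ after dividing by $r$. Tightness of $\widetilde V_r$ in $C([0,T])$ is routine given the moment bounds of Corollary \ref{fttt}.

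The main obstacle will be the precise identification of the drift constant $((d-2)\sigma^2-(d-1)\lambda^2)/2$. Whereas the covariance matches (\ref{sig}) almost tautologically, the drift emerges as a delicate cancellation between the curvature of $y(s)$ relative to the straight line $z_n(s)$ and the transverse geometric factor $(d-1)$ that arises after rotational averaging. Controlling this cancellation over the $\sim r^4$ steps contained in an original-time window of length $r^3t$ requires the sharpened $L^1$ bound (\ref{bb5}) from Lemma \ref{nlnl} rather than the weaker high-probability bound (\ref{bb4}); the probabilistic bound alone would only control a vanishing fraction of the steps, whereas the expectation bound permits summing the errors to $o(1)$. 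A secondary technical point is the uniform control of the chain near the coordinate singularity $|v|=0$, which is handled using the near-recurrence estimate (\ref{BiasUp}) and the stopping time $\overline\tau_n$ introduced in Section \ref{papr}.
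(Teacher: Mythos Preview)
The paper does not prove this lemma at all: immediately after the statement it says the result is a slight modification of the main theorems of \cite{KP}, \cite{DGL}, \cite{KR}, notes that the only novelty is the renewal of the force field at the times $\tau_n$, and asserts that the cited proofs go through without major changes. Your proposal is therefore not in competition with any argument in the paper; what you have written is, in effect, an outline of the Kesten--Papanicolaou proof itself, reorganized around the genuinely Markov chain $\{v_n\}$ that the renewal structure makes available. That reorganization is natural and arguably cleaner than the original, where the full past of the trajectory has to be carried along.

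One point is worth sharpening. Lemma~\ref{nlnl} (estimate (\ref{bb5})) gives only an $L^1$ bound $|v_n|^{-3+\delta}$ on the residual $\Delta v_n-\int_{\tau_n}^{\eta_{n+1}}\widetilde F_n(z_n(s))\,ds$; it does not identify the leading coefficient, and since the straight-line integral has exactly zero conditional mean by the symmetry assumption (\ref{symmetr}) (this is how (\ref{SVMT}) is shown to be a martingale), the one-step drift of $\{v_n\}$ at order $|v_n|^{-3}$ is not recoverable from (\ref{bb5}) alone. Summed over the $\sim r^4$ steps in a window of length $r^3t$, an unidentified $|v_n|^{-3+\delta}$ mean would contribute $r^{1+\delta}$, which survives the division by $r$. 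Your later paragraph locating the drift in the second-order expansion of $F(y(s))$ about $F(z_n(s))$ is the correct mechanism and is exactly the computation carried out in \cite{KP}; but it is a separate calculation, not a consequence of (\ref{bb5}), and you should present it that way rather than as an application of Lemma~\ref{nlnl}.
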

This lemma is a slight modification of the results of \cite{DGL,
KP, KR} to the case of the processes $Y(t)$ and $Z(t)$, so we omit
the proof. For example, the main theorem of \cite{KP} on page 24
gives the desired result, except the fact that in the setting of
\cite{KP} there is no renewal of the force field. The proof,
however, goes through without major modifications.

\begin{corollary} \label{fs} For each  $a > 0$ and $b >1$ there is $c < 1$
such that
\[
{\mathrm{P}} \left(   \hat{\tau}^a_0 < \check{\tau}^b_0 \right)
\leq c
\]
for all sufficiently large $r$ and all $|v_0| \in (r, b r)$. The
same is true if $\hat{\tau}^a_0$ and  $\check{\tau}^b_0$ are
defined as the stopping times for the process $Z(t)$ with initial
velocity $v_0$.
\end{corollary}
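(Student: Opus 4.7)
The plan is to rescale space and time and pass to the diffusion limit provided by Lemma~\ref{ole}. By isotropy I may assume $v_0 = (|v_0|, 0, \ldots, 0)$, and I write $\rho = |v_0|/r \in (1, b)$. Setting $U_r(t) = \dot{Y}(|v_0|^3 t)/|v_0|$, the process $U_r$ starts at the fixed point $e_1$, and the event $\{\hat{\tau}^a_0 < \check{\tau}^b_0\}$ translates, up to discretization errors of order $r^{-1+\delta}$ controlled by Corollary~\ref{fttt}, into
\[
A_\rho = \bigl\{|U_r(t)| \in (1/\rho,\, b/\rho) \text{ for all } t \in [0,\, a/\rho^3] \bigr\}.
\]

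By Lemma~\ref{ole} the law of $U_r$ converges weakly to that of $\overline{V}$ started at $e_1$. Since $\overline{V}$ is a non-degenerate diffusion which avoids the origin almost surely (see the discussion around (\ref{BiasUp})), the hitting times of the spheres $\{|v|=1/\rho\}$ and $\{|v|=b/\rho\}$ are almost surely continuous functionals of the path, so $\partial A_\rho$ is $\overline{V}$-null. Hence by the portmanteau theorem,
\[
\mathrm{P}(U_r \in A_\rho) \longrightarrow p(\rho) := \mathrm{P}\bigl(|\overline{V}(t)| \in (1/\rho,\, b/\rho) \text{ for all } t \in [0,\, a/\rho^3] \bigr).
\]

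The key step is then to check that $\sup_{\rho \in [1,b]} p(\rho) < 1$. For each fixed $\rho \in (1,b)$ the annulus $(1/\rho, b/\rho)$ is bounded and bounded away from the origin, so the exit time of the non-degenerate diffusion $\overline{V}$ is almost surely finite, giving $p(\rho) < 1$. At the endpoints $\rho = 1$ and $\rho = b$, the starting point $e_1$ lies on the boundary of the annulus and the diffusion leaves it immediately, so $p(\rho)=0$. The map $\rho \mapsto p(\rho)$ is continuous on $[1,b]$ by path continuity of $\overline{V}$ together with the dominated convergence theorem, and therefore its supremum on the compact interval $[1,b]$ is strictly less than $1$. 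Choosing $c$ slightly larger than this supremum finishes the proof; the corresponding statement for $Z(t)$ follows from the same argument since Lemma~\ref{ole} treats both processes identically.

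The main obstacle is ensuring that the convergence $\mathrm{P}(U_r \in A_\rho) \to p(\rho)$ is uniform in $\rho \in [1,b]$, so that the bound $c < 1$ holds simultaneously for all $|v_0| \in (r, br)$ once $r$ is large enough. This can be handled either by a compactness-and-contradiction argument — any violating sequence $(r_n, \rho_n)$ with $r_n \to \infty$ and $\rho_n \to \rho^* \in [1,b]$ would force $p(\rho^*) = 1$, contradicting the previous paragraph — or by covering $[1,b]$ by finitely many small subintervals on which the events $A_\rho$ are nearly constant and applying Lemma~\ref{ole} on each piece.
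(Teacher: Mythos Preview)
Your argument is correct, but the paper takes a shorter route that entirely sidesteps the uniformity issue you flag at the end. Instead of tracking the parameter $\rho=|v_0|/r$ and then arguing that $\sup_{\rho\in[1,b]} p(\rho)<1$ via continuity and compactness, the paper simply enlarges every event $A_\rho$ to a single $\rho$-free event: since $(1/\rho,\,b/\rho)\subset(1/b,\,b)$ and $a/\rho^3\ge a/b^3$ for all $\rho\in[1,b]$, one has
\[
A_\rho \subseteq \Bigl\{\,|U_r(t)|\in(1/b,\,b)\text{ for all }t\in[0,a/b^3]\,\Bigr\},
\]
and weak convergence (Lemma~\ref{ole}) is applied once to this larger set, bounding its limsup by $\mathrm{P}\bigl(1/(2b)\le|\overline{V}(t)|\le 2b$ for all $t\in[0,a/b^3]\bigr)<1$. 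This monotonicity trick buys you the uniform bound in $|v_0|\in(r,br)$ for free, with no need to check nullity of $\partial A_\rho$, continuity of $\rho\mapsto p(\rho)$, or run a subsequence argument. Your approach is sound and arguably more systematic, but it does require those extra verifications; the paper's one-line enlargement is the cleaner way to finish here.
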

 \proof It is sufficient to consider the process
$Y(t)$ since the proof for the process $Z(t)$ is completely
similar. From Lemma~\ref{ole} and the rotation-invariance of the
force field it follows that when $|v_0| \rightarrow \infty$, the
processes ${| \dot{Y}( |v_0|^3 t)|}/{|v_0|}$ converge weakly to
the diffusion process $|\overline{V}(t)|$ with $|\overline{V}(0)|
= 1$. Therefore
\[
\limsup_{|v_0| \rightarrow \infty} {\mathrm{P}}
\left(\frac{|v_0|}{b} < {| \dot{Y}( t)|} < |v_0| b~~{\rm for}~{\rm
all}~0 \leq t \leq a |v_0|^3 \right) =
\]
\[
\limsup_{|v_0| \rightarrow \infty} {\mathrm{P}} \left(\frac{1}{b}
< {| \dot{Y}( |v_0|^3 t)|}/{|v_0|} < b~~{\rm for}~{\rm all}~0 \leq
t \leq a \right) \leq
\]
\[
 {\mathrm{P}} \left(\frac{1}{2b} < |\overline{V}(t)| < 2b~~{\rm
for}~{\rm all}~0 \leq t \leq a \right) < 1,
\]
where the first inequality holds  due to the weak convergence of
$\dot{Y}( |v_0|^3 t)|/{|v_0|}$ to $\overline{V}(t)$ since the
closure of the set  $\{\varphi \in C([0,a],\mathbb{R}): 1/b <
\varphi(t) < b~~{\rm for}~{\rm all}~0 \leq t \leq a\}$ is
contained in the open set  $\{\varphi \in C([0,a],\mathbb{R}):
1/2b < \varphi(t) < 2b~~{\rm for}~{\rm all}~0 \leq t \leq a\}$.
The second inequality is due to the fact that $|\overline{V}(t)|$
is a non-degenerate diffusion process on $(0, \infty)$ starting at
$1$, as follows from (\ref{diff1}) and Lemma~\ref{ole}. The
Corollary now follows from the definitions of $\hat{\tau}^a_0$ and
$\check{\tau}^b_0$. \qed

 Now we can  replace the stopping time $ \overline{\tau}_0$
by $\check{\tau}^b_0$ in Lemma~\ref{fl}.
 \begin{lemma} \label{wweel}
For each $N$, $\delta > 0$,  and $b >1$,  we have
\begin{equation} \label{dd1}
{\mathrm{P}}\left( \tau_{n+1} - \tau_n > |v_n|^{-1+\delta}~{\it
for}~{\it some}~n~{\it such}~{\it that}~\tau_n <\check{\tau}^b_0
\right) \leq r^{-N},
\end{equation}
\begin{equation} \label{dd2}
{\mathrm{P}}\left( \sup_{\tau_n \leq t \leq \tau_{n+1}}
|\dot{Y}(t) - v_n|
> |v_n|^{-1 + \delta}~{\it for}~{\it some}~n~{\it such}~{\it that}~\tau_n <
\check{\tau}^b_0 \right) \leq r^{-N},
\end{equation}
\begin{equation} \label{dd5}
{\mathrm{P}}\left(\check{\tau}^b_0 = \infty \right) \leq r^{-N}
\end{equation}
for all sufficiently large $r$ and all $|v_0| \in (r, b r)$.
\end{lemma}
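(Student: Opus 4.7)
The plan is to iterate Corollary \ref{fs} via the strong Markov property (\ref{smp2}) to show that $\check{\tau}^b_0$ is finite with polynomially high probability, and then to reduce the probability bounds in (\ref{dd1}) and (\ref{dd2}) to the corresponding local bounds (\ref{bb1}) and (\ref{bb2}) of Corollary \ref{fttt} by a union bound over the polynomially many stopping times $\tau_n$ occurring before $\check{\tau}^b_0$.

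For (\ref{dd5}), I would set up a geometric iteration as follows. Let $T_0 = 0$ and, given $T_k = \tau_{n_k}$, set $T_{k+1} = \hat{\tau}^a_{n_k}$, the first $\tau_j$ with $\tau_j - T_k \geq ar^3$; by construction $T_{k+1} - T_k \geq ar^3$. On the event $\{T_k < \check{\tau}^b_0\}$ we have $\dot{Y}(T_k) \in (r, br)$, and since each $T_k$ is one of the $\tau_n$ with $\{T_k \leq \tau_n\} \in \mathcal{G}_n$, the Markov property (\ref{smp2}) applied with $\tau = T_k$ lets us invoke Corollary \ref{fs} for the restarted process, giving
\[
\mathrm{P}\bigl(T_{k+1} < \check{\tau}^b_0 \bigm| \mathcal{G}_{n_k}\bigr)\, \chi_{\{T_k < \check{\tau}^b_0\}} \leq c\, \chi_{\{T_k < \check{\tau}^b_0\}}.
\]
Iterating yields $\mathrm{P}(T_K < \check{\tau}^b_0) \leq c^K$, and choosing $K = \lceil N \log r / \log(1/c)\rceil$ bounds the right side by $r^{-N}$. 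The finiteness of $T_K$ on $\{T_k < \check{\tau}^b_0 \text{ for all } k \leq K\}$ follows from (\ref{bb1}) applied to the at most $abr^4$ indices $j$ between $T_k$ and $T_{k+1}$ (bounded by $abr^4$ because $|v_n|^{-1} \geq (br)^{-1}$ keeps consecutive stopping times at least $l/(br)$ apart while $|\dot{Y}| \in (r, br)$). This proves (\ref{dd5}).

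For (\ref{dd1}) and (\ref{dd2}), the same iteration (with $K$ a constant multiple larger) gives $\mathrm{P}(\check{\tau}^b_0 > M) \leq r^{-N}$ for some $M = O(r^3 \log r)$. On the complementary event the number of stopping times $\tau_n$ with $\tau_n < \check{\tau}^b_0$ is at most $M \cdot br / l = O(r^4 \log r)$, since $\tau_{n+1} - \tau_n \geq l|v_n|^{-1} \geq l/(br)$ throughout. For each such $n$, Corollary \ref{fttt} bounds the conditional probability of the bad event in (\ref{dd1}) or (\ref{dd2}) at index $n$ by $|v_n|^{-N'} \leq r^{-N'}$; a union bound over $O(r^4 \log r)$ indices, with $N'$ chosen large relative to $N$, produces the desired $r^{-N}$ estimate. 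The only point requiring care, and the main (mild) obstacle, is the verification that each iterated stopping time $T_k$ is of the form $\tau_{n_k}$ with $\mathcal{G}_{n_k}$-measurable index, so that (\ref{smp2}) is genuinely applicable — but this is built into the definition of $\hat{\tau}^a_n$ and so is automatic.
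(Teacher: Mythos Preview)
Your overall strategy is correct and matches the paper's: iterate Corollary~\ref{fs} through the Markov property~(\ref{smp2}) to get geometric decay, and pay an $r^{-N}$ correction at each step. The paper carries this out slightly differently. For (\ref{dd1}) and (\ref{dd2}) it does not first bound $\check{\tau}^b_0$ and then take a union bound; instead it sets $q_Y=\sup_{|v_0|\in(r,br)}\mathrm{P}(\text{bad event before }\check{\tau}^b_0)$ (and the analogous $q_Z$), splits at $\overline{\tau}_0=\min(\hat{\tau}^a_0,\check{\tau}^b_0)$, bounds the ``before $\overline{\tau}_0$'' part by Lemma~\ref{fl}, and uses Corollary~\ref{fs} plus (\ref{smp2}) on the rest to get the closed recursion $q_Y\le r^{-N}+c\,q_Z$, $q_Z\le r^{-N}+c\,q_Z$, which solves immediately since $c<1$. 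For (\ref{dd5}) it does the same with $\overline{q}(k)=\sup\mathrm{P}(\check{\tau}^b_0>2kar^3)$, obtaining $\overline{q}(k)\le r^{-N}+c\,\overline{q}(k-1)$ and hence $\overline{q}(k)\le r^{-N}+c^k$ (this is (\ref{iiu})). Your route via an explicit tail bound on $\check{\tau}^b_0$ followed by a union bound also works, but is a detour: once you have (\ref{iiu}) you might as well run the recursion for (\ref{dd1})--(\ref{dd2}) directly.

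There is one genuine loose end. Your sentence ``the finiteness of $T_K$ on $\{T_k<\check{\tau}^b_0\text{ for all }k\le K\}$ follows from (\ref{bb1})\ldots'' is tautological as written (that event already forces $T_K<\infty$), and as a substitute for the missing step it does not do the job. What actually needs to be controlled is the possibility that some $T_{j}-T_{j-1}$ is much larger than $ar^3$ (equivalently, that $\hat{\tau}^a_{n_{j-1}}$ overshoots), since without this the inclusion $\{\check{\tau}^b_0=\infty\}\subseteq\{T_K<\check{\tau}^b_0\}$ can fail and the bound $\mathrm{P}(\check{\tau}^b_0>M)\le r^{-N}$ does not follow from $\mathrm{P}(T_K<\check{\tau}^b_0)\le c^K$ alone. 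The paper handles exactly this overshoot: if $\hat{\tau}^a_0>2ar^3$ while $\check{\tau}^b_0>2ar^3$, then some gap $\tau_{k+1}-\tau_k$ with $\tau_k<\overline{\tau}_0$ exceeds $ar^3$, and this has probability $\le r^{-N}$ by Lemma~\ref{fl} (formula~(\ref{cc1})). Inserting this correction at each step of your iteration (i.e., replacing $c^K$ by $c^K+Kr^{-N}$, or equivalently invoking (\ref{cc5}) for each restart) closes the gap; (\ref{bb1}) alone is not enough because you cannot a priori bound the number of indices over which to union without already knowing $T_K$ is not too large.
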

\proof Let
\[
q_Y = q_Y(r)=\sup_{v_0: |v_0| \in (r, b r) } {\mathrm{P}}\left(
\tau_{n+1} - \tau_n > |v_n|^{-1+\delta}~{\it for}~{\it
some}~n~{\it such}~{\it that}~\tau_n <\check{\tau}^b_0 \right).
\]
Let $q_Z=q_Z(r)$ be defined as $q_Y$, with the only difference that the
stopping times are assumed to correspond to the process $Z(t)$
instead of $Y(t)$. Take an arbitrary $a > 0$. Then, for $ |v_0|
\in (r, b r)$ we have
\[
{\mathrm{P}}\left( \tau_{n+1} - \tau_n > |v_n|^{-1+\delta}~{\it
for}~{\it some}~n~{\it such}~{\it that}~\tau_n <\check{\tau}^b_0
\right) \leq
\]
\[
{\mathrm{P}}\left( \tau_{n+1} - \tau_n > |v_n|^{-1+\delta}~{\it
for}~{\it some}~n~{\it such}~{\it that}~\tau_n <\overline{\tau}_0
\right) +
\]
\[
{\mathrm{P}}\left( \tau_{n+1} - \tau_n > |v_n|^{-1+\delta}~{\it
for}~{\it some}~n~{\it such}~{\it that}~\overline{\tau}_0 \leq
\tau_n <\check{\tau}^b_0 \right).
\]
The first term in the right-hand side does not exceed $r^{-N}$ by
Lemma~\ref{fl}. In order to estimate the second term, we observe
that
\[
{\mathrm{P}}\left( \tau_{n+1} - \tau_n > |v_n|^{-1+\delta}~{\it
for}~{\it some}~n~{\it such}~{\it that}~\overline{\tau}_0 \leq
\tau_n <\check{\tau}^b_0 \right)=
\]
\[
{\mathrm{P}}\left( \hat{\tau}^a_0  <\check{\tau}^b_0~~{\rm and}~~
\tau_{n+1} - \tau_n > |v_n|^{-1+\delta}~{\it for}~{\it
some}~n~{\it such}~{\it that}~\hat{\tau}^a_0 \leq \tau_n
<\check{\tau}^b_0 \right) \leq
\]
\[
 {\mathrm{P}} \left( \hat{\tau}^a_0 < \check{\tau}^b_0 \right)
 q_Z,
\]
where the inequality is due to the Markov property with respect to
the stopping time $\hat{\tau}^a_0$ (see formula~(\ref{smp2})).
Therefore, by Corollary~\ref{fs},
\[
q_Y \leq r^{-N} + c q_Z.
\]
Similarly,
\[
q_Z \leq r^{-N} + c q_Z.
\]
Since $c <1$ and $N$ is arbitrary, these two inequalities imply
(\ref{dd1}). The proof of (\ref{dd2})  is similar. In order to
prove  (\ref{dd5}), define
\[
\overline{q}_Y(k) = \sup_{v_0: |v_0| \in (r, b r) }
{\mathrm{P}}\left( \check{\tau}^b_0 > 2kar^3 \right),~~k \geq 0.
\]
Let $\overline{q}_Z(k)$ be defined as $\overline{q}_Y(k)$, with
the only difference that the stopping times are assumed to
correspond to the process $Z(t)$ instead of $Y(t)$. Note that for
$ |v_0| \in (r, b r)$ we have
\begin{equation} \label{cor1}
{\mathrm{P}}\left( \check{\tau}^b_0 > 2kar^3 \right) \leq
{\mathrm{P}}\left( \hat{\tau}^a_0 > 2ar^3, \check{\tau}^b_0
> 2kar^3  \right) +
{\mathrm{P}}\left( \hat{\tau}^a_0 \leq 2ar^3, \check{\tau}^b_0
> 2kar^3 \right),~k \geq 1.
\end{equation}
 Note that if $ \hat{\tau}^a_0, \check{\tau}^b_0 > 2ar^3 $, then
$ \tau_{k+1} - \tau_k > ar^3$ for some $k$ with $\tau_k <
\overline{\tau}_0$. Therefore, the first term on the right-hand
side can be estimated from above by $r^{-N}$ due to (\ref{cc1}).
By the Markov property with respect to the stopping time
$\hat{\tau}^a_0$ (see formula~(\ref{smp2})),  the second term does
not exceed $ {\mathrm{P}} \left( \hat{\tau}^a_0 < \check{\tau}^b_0
\right) \overline{q}_Z(k-1) \leq  c \overline{q}_Z(k-1)$, where
$c$ is the constant from Corollary~\ref{fs}. Therefore,
\[
\overline{q}_Y(k) \leq r^{-N} + c \overline{q}_Z(k-1).
\]
Similarly,
\[
\overline{q}_Z(k) \leq r^{-N} + c \overline{q}_Z(k-1).
\]
Since $c <1$ and $N$ is arbitrary, these two inequalities imply
that
\begin{equation} \label{iiu}
\max(\overline{q}_Y(k), \overline{q}_Z(k)) \leq r^{-N} + c^k.
\end{equation}
This implies (\ref{dd5}) since an arbitrarily large $k$ can be
taken.
 \qed
\begin{corollary} \label{fie}
For each $N$, $\delta > 0$, and $b >1$,  we have
\[
{\mathrm{P}}\left( \tau_{k+1} - \tau_k > |v_k|^{-1+\delta}~{\it
for}~{\it some}~k~{\it such}~{\it that}~\tau_n < \tau_k <
\check{\tau}^b_n | \mathcal{G}_n \right) \leq r^{-N},
\]
\[
{\mathrm{P}}\left( \sup_{\tau_k \leq t \leq \tau_{k+1}}
|\dot{Y}(t) - v_k|
> |v_k|^{-1 + \delta}~{\it for}~{\it some}~k~{\it such}~{\it that}~\tau_n \leq \tau_k <
\check{\tau}^b_n  | \mathcal{G}_n \right) \leq r^{-N},
\]
\[
{\mathrm{P}}\left(\check{\tau}^b_n = \infty | \mathcal{G}_n
\right) \leq r^{-N}
\]
for all sufficiently large $r$ almost surely on each of the events
$|v_n| \in (r, b r) $.
\end{corollary}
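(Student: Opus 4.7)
The plan is to reduce the conditional bounds in Corollary \ref{fie} to the unconditional bounds already established in Lemma \ref{wweel}, using the strong Markov property (\ref{smp2}) at the stopping time $\tau_n$. All three claims have the same structure: they concern events measurable with respect to the trajectory and stopping times after $\tau_n$, conditioned on $\mathcal{G}_n$, and are asserted to hold on the event $|v_n| \in (r, br)$.

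First I would unpack what (\ref{smp2}) gives us here. Conditional on $\mathcal{G}_n$ and on $\dot{Y}(\tau_n) = w_0$ with $w_0 \in (r,br)$, the shifted process $Y(\tau_n+\cdot)-Y(\tau_n)$ has the same distribution as the process $Z(\cdot)$ with initial velocity $w_0$. Under this identification the stopping times $\tau_k - \tau_n$ for $k \geq n$ correspond to the stopping times for $Z$, the velocities $v_k$ for $k \geq n$ correspond to the velocities of $Z$ at those stopping times, and $\check{\tau}^b_n - \tau_n$ corresponds to $\check{\tau}^b_0$ defined for $Z$ with initial velocity $w_0$. Hence each of the three events in the statement translates, event by event, to the corresponding event for the process $Z$ started from $w_0 \in (r,br)$.

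Second, I would invoke the fact that the conclusions of Lemma \ref{wweel} hold not only for $Y$ but also for $Z$ with any initial velocity $w_0 \in (r,br)$. This is essentially contained in the proof of Lemma \ref{wweel}: the auxiliary quantities $q_Z$ and $\overline{q}_Z(k)$ were set up precisely in terms of the process $Z$, and the pair of coupled inequalities $q_Y \leq r^{-N} + c q_Z$, $q_Z \leq r^{-N} + cq_Z$ together with $c<1$ forces $q_Z \leq r^{-N}$ as well; the same argument (with the renewal of the force field handled by Corollary \ref{fs} applied to $Z$) yields the analogs of (\ref{dd2}) and (\ref{dd5}) for $Z$.

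Putting the two ingredients together gives the corollary: on the event $\{|v_n|\in(r,br)\}$, each conditional probability on the left-hand side equals the unconditional probability of the analogous event for $Z$ started from $w_0 = v_n$, and each of these is at most $r^{-N}$ by the $Z$-versions of (\ref{dd1})--(\ref{dd5}). There is no real obstacle beyond bookkeeping; the only step that requires a small amount of care is checking that the events in question are indeed measurable with respect to the shifted trajectory (so that (\ref{smp2}) applies), which is clear since $\tau_{k+1}-\tau_k$, $\dot{Y}(t)-v_k$ for $\tau_k \leq t \leq \tau_{k+1}$, and $\check{\tau}^b_n$ all depend only on $Y(\tau_n+\cdot)-Y(\tau_n)$.
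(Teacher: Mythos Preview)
Your proposal is correct and matches the paper's intended argument: the corollary is not given an explicit proof in the paper, but the established pattern (Lemma~\ref{ftt01} $\to$ Corollary~\ref{fttt} via the Markov property for $Z$, and Lemma~\ref{fl} $\to$ its corollary ``by considering $Z(t)$ instead of $Y(t)$'') makes clear that Corollary~\ref{fie} is meant to follow from Lemma~\ref{wweel} in exactly the way you describe. The observation that the proof of Lemma~\ref{wweel} already bounds $q_Z$ and $\overline{q}_Z(k)$ (so the $Z$-versions of (\ref{dd1})--(\ref{dd5}) come for free) is precisely the point.
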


\begin{lemma} \label{ccf}
 For each $N$, $ \delta > 0$, and $k >
0$ we have
\[
{\mathrm{P}}\left( \sup_{0 \leq t\leq |v_0|^{3 - \delta}} |
\dot{Y}(t) - v_0| > k |v_0|\right) \leq |v_0|^{-N}
\]
for all sufficiently large $|v_0|$.
\end{lemma}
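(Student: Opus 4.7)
The plan is a moderate-deviation argument on the discrete velocity process $\{v_n\}$. Let $T = |v_0|^{3-\delta}$ and $N_\star \sim |v_0|^{4-\delta}$ be an a-priori upper bound (coming from the spacing $\tau_{n+1}-\tau_n \geq l/|v_n|$) on the number of renewal times in $[0,T]$ when $|v_n|$ stays of order $|v_0|$. It suffices to treat $k \leq 1$, the case $k > 1$ being strictly easier. Introduce the stopping index $\sigma = \min(N_\star,\,\inf\{n : |v_n - v_0| > k|v_0|/4\})$; on $\{n < \sigma\}$ one has $|v_n| \in (|v_0|/2, 2|v_0|)$. A union bound over $n < \sigma$ in Corollary~\ref{fttt} and Lemma~\ref{nlnl}, together with an estimate analogous to~(\ref{pp1aa}) to bound $\|\widetilde F_k\|_{C^0}$ along the segment $z_k$, produces a good event $A$ with $\mathrm{P}(A^c) \leq |v_0|^{-N}$ on which all of the relevant high-probability bounds hold for every $k < \sigma$. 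Combined with the interpolation bound~(\ref{bb2}), it is enough to show that on $A$ we have $\sigma = N_\star$ with probability $\geq 1 - |v_0|^{-N}$.

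Decompose each increment as $v_{k+1} - v_k = I_k + e_k$ where $I_k = \int_{\tau_k}^{\eta_{k+1}} \widetilde F_k(z_k(s))\, ds$. The critical observation is that $\mathrm{E}[I_k \mid \mathcal{G}_k] = 0$: conditioning further on the Poisson set $\{r^k_i\}$ freezes both the interval $[\tau_k, \eta_{k+1}]$ and the straight line $z_k$, and then $I_k$ is a linear functional of the i.i.d.\ mean-zero functions $f^k_i$, which are independent of $\mathcal{G}_k$ and $\{r^k_i\}$. Hence $M_n := \sum_{k<n} I_k$ is a $(\mathcal{G}_n)$-martingale, whose differences on $A$ are bounded by $C\log|v_0| \cdot |v_0|^{-1+\delta'}$. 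The error sum $\sum_{k<n} e_k$ decomposes, on $A$, into a martingale $\widetilde M_n := \sum_{k<n}(e_k\chi_{H_k} - \mathrm{E}[e_k\chi_{H_k}\mid\mathcal{G}_k])$ with differences bounded by $2|v_0|^{-1+\delta'}$, plus the predictable drift $D_n := \sum_{k<n}\mathrm{E}[e_k\chi_{H_k}\mid\mathcal{G}_k]$, which by Lemma~\ref{nlnl} satisfies $|D_{N_\star}| \leq N_\star\cdot|v_0|^{-3+\delta'} = |v_0|^{1-\delta+\delta'} = o(|v_0|)$ once $\delta' < \delta$.

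Applying Azuma--Hoeffding to the stopped martingales $M_{n\wedge\sigma}$ and $\widetilde M_{n\wedge\sigma}$ gives
\[
\mathrm{P}\bigl(\max_n|M_{n\wedge\sigma}| + \max_n|\widetilde M_{n\wedge\sigma}| > k|v_0|/3\bigr) \leq 2\exp\bigl(-c k^2 |v_0|^{\delta-2\delta'}/(\log|v_0|)^2\bigr),
\]
which decays faster than any polynomial in $|v_0|$ once $\delta' < \delta/2$. On $A$ intersected with the complement of this event, $|v_{n\wedge\sigma} - v_0| < k|v_0|/2$ for all $n \leq N_\star$, which by definition of $\sigma$ forces $\sigma = N_\star$. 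The chief technical obstacle is the bookkeeping around $\sigma$: one must formulate the good event $A$ without circular reference to $\sigma$ (each $|v_k|^{-N}$-type bound is phrased as holding on the event $\{|v_k| \geq |v_0|/2\}$), and then invoke optional stopping to promote the martingale concentration on $\{n \leq N_\star\wedge\sigma\}$ into the conclusion $\sigma = N_\star$.
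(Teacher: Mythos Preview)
Your proposal is correct and follows essentially the same route as the paper: the same decomposition $v_{k+1}-v_k = I_k + e_k$ with $I_k=\int_{\tau_k}^{\eta_{k+1}}\widetilde F_k(z_k(s))\,ds$, the same martingale argument for $\sum I_k$ via the symmetry assumption~(\ref{symmetr}) after conditioning on the Poisson points, the same localization through a stopping index $\sigma\leq L\sim|v_0|^{4-\delta}$, and a Doob-type decomposition for the error sum. The only real difference is that the paper bounds the quadratic variations of the two (super)martingales and invokes the martingale moment inequality with Chebyshev, whereas you use Azuma--Hoeffding; both yield super-polynomial decay, but note that Azuma needs increments bounded almost surely rather than merely on $A$, so in practice you must truncate $I_k$ (and re-center) before applying it, exactly the kind of bookkeeping you flag at the end.
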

\proof Let us write
\begin{equation} \label{yy}
\dot{Y}(t) - v_0 = (v_1 - v_0) + (v_2 - v_1)+...+ (v_n - v_{n-1})
+ \dot{Y}(t) - v_n,
\end{equation}
where $n = n(t)$ is the random time such that $\tau_{n-1} \leq t <
\tau_n$. Without loss of generality we may assume that $k \leq
1/2$. Let $L = L(v_0) = [2(|v_0|^{4-\delta} l^{-1} +1)]$, where $l
= 4R+1$ is the constant used in the definition of the stopping
times $\tau_n$.
 Let $\sigma$ be the random time defined by
\[
\sigma = \min\{m: |(v_1 - v_0) + (v_2 - v_1)+...+ (v_m - v_{m-1})|
\geq  k |v_0|/2\} \wedge L.
\]
Observe that if $| \dot{Y}(t)|$ does not exceed $2 |v_0|$ on the
time interval $[0,|v_0|^{3 - \delta}]$, then $\tau_{ L} \geq
|v_0|^{3 - \delta}$ since $\tau_{n+1} - \tau_n \geq l/(2 |v_0|)$
for each $n $ such that $\tau_n < |v_0|^{3 - \delta}$, as follows
from the definition of the stopping times $\tau_n$. Therefore,
\[
\{  \sup_{0 \leq t\leq |v_0|^{3 - \delta}} | \dot{Y}(t) - v_0| > k
|v_0| \} \subseteq
\]
\[
\subseteq \{\sigma < L \}
 \cup\left( \{ \sigma = L \} \cap \{  \sup_{0 \leq t\leq |v_0|^{3 - \delta}} | \dot{Y}(t) - v_0| > k
|v_0| \} \right)
\]
\[
\subseteq \{\sigma < L \}
 \cup\left( \{ \sigma = L \} \cap \bigcup_{m=1}^{L}\{ \sup_{\tau_{m-1} \leq t\leq
\tau_m} | \dot{Y}(t) - v_0| \geq k |v_0| \} \right)
\]
\[
\subseteq  \{\sigma < L \}
 \cup\left( \{ \sigma = L \} \cap \bigcup_{m=1}^{L}\{ \sup_{\tau_{m-1} \leq t\leq
\tau_m} | \dot{Y}(t) - v_{m-1}| \geq k |v_0|/2 \} \right).
\]
Define $c_i$, $i \geq 1$, by
\[
(v_{i} - v_{i-1}) =
\int_{\tau_{i-1}}^{\eta_i}\widetilde{F}_{i-1}(z_{i-1}(s)) d s +
c_{i}.
 \]
By Corollary~\ref{fttt} and Lemma~\ref{nlnl} (formulas
(\ref{bb3}), (\ref{bb4}), and (\ref{bb5})), for each  $N$ and
$\varepsilon > 0$ the estimates
\begin{equation} \label{fc1}
\mathrm{P}(|c_{i}| > |v_0|^{-1+\varepsilon} | \mathcal{G}_{{i-1}}
) \leq |v_0|^{-N},
\end{equation}
\begin{equation} \label{fc2}
\mathrm{E}( |c_i|  \chi_{ \{ |c_i| \leq |v_0|^{-1  + \varepsilon}
\}} | \mathcal{G}_{{i-1}} ) \leq |v_0|^{-3+\varepsilon}
\end{equation}
hold for each $i$ on $\{ \sigma \geq i \}$  if $|v_0|$ is
sufficiently large. Let
\[
C_j = \sum_{i =1}^{j \wedge \sigma} |c_i|,
\]
\begin{equation} \label{formhh}
h_j =  \sum_{i =1}^{j \wedge \sigma} (|c_i| \chi_{ \{ |c_i| \leq
|v_0|^{-1 + \varepsilon} \}} -  |v_0|^{-3+  \varepsilon} ).
\end{equation}
By (\ref{fc1}) and (\ref{fc2}), $h_j$ is a supermartingale. Let
$h_j = \alpha_j + \beta_j$ be the Doob decomposition of $h_j$,
where
\[
\beta_j = \sum_{i =1}^{j \wedge \sigma} \mathrm{E}\left( (|c_i|
\chi_{ \{ |c_i| \leq |v_0|^{-1  + \varepsilon} \}} -  |v_0|^{-3+
\varepsilon} )| \mathcal{G}_{{i-1}} \right)
\]
is a non-increasing process.  Let $\langle \alpha \rangle_j$ be
the quadratic variation of $\alpha_j$. It is equal to
\[
\langle \alpha \rangle_j = \sum_{i=1}^{j} \mathrm{E} \left( ((h_i
- h_{i-1}) - (\beta_i - \beta_{i-1}) )^2 | \mathcal{G}_{{i-1}}
\right),~~ j \geq 1.
\]
From (\ref{formhh}) it follows that $|h_i - h_{i-1}| \leq
2|v_0|^{-1+  \varepsilon}$ for sufficiently large $|v_0|$, and
consequently $|\beta_i - \beta_{i-1}| \leq 2|v_0|^{-1+
\varepsilon}$. Therefore $\langle \alpha \rangle_j \leq 16 j
|v_0|^{-2+  2\varepsilon}$, which implies that for each $p \in
\mathbb{N}$ there is a constant $k_p$ such that
\[
 \langle \alpha \rangle_j^p \leq  k_p (j |v_0|^{-2 + 2
\varepsilon })^p.
\]
Applying this inequality to $j = L$ and noting that $\sigma \leq
L$ and $\langle \alpha \rangle_j^p$ is non-decreasing in $j$, we
 obtain
 \[
 \langle \alpha \rangle_\sigma^p \leq  k'_p ( |v_0|^{2 +
2 \varepsilon  - \delta})^p.
 \]
Take $\varepsilon = \delta/3$. Then, by the Chebyshev Inequality
and the Martingale Moment Inequality, for each $N$ there are $p$
and $K_p$ such that
\[
\mathrm{P}(h_\sigma \geq k|v_0|/8 ) \leq \mathrm{P}(\alpha_\sigma
\geq k|v_0|/8 ) \leq \mathrm{P}(|\alpha_\sigma|^{2p} \geq
(k|v_0|/8)^{2p} ) \leq
\]
\[
\frac{ \mathrm{E} |\alpha_\sigma|^{2p}}{(k|v_0|/8)^{2p}} \leq
\frac{ K_p \mathrm{E} \langle \alpha
\rangle_\sigma^{p}}{(k|v_0|/8)^{2p}} \leq |v_0|^{-N}
\]
if $|v_0|$ is sufficiently large. Note that
\[
\mathrm{P}(C_\sigma \geq k|v_0|/4 ) \leq
\]
\begin{equation} \label{mdl}
\mathrm{P}(h_\sigma \geq k|v_0|/8 ) + \mathrm{P}(C_\sigma -
h_\sigma \geq k|v_0|/8 ) \leq
\end{equation}
\[
|v_0|^{-N} + \mathrm{P}( \sum_{i =1}^{ \sigma} (|c_i| \chi_{ \{
|c_i| > |v_0|^{-1 + \varepsilon} \}} +  |v_0|^{-3+ \varepsilon} )
\geq k|v_0|/8 ).
\]
Since $\sigma \leq L$, and therefore ${ \sigma} |v_0|^{-3+
\varepsilon}  < k|v_0|/8$ for all sufficiently large $|v_0|$, the
second term in the right hand side of (\ref{mdl}) is estimated
from above by
\[
\mathrm{P}(|c_i| > |v_0|^{-1 + \varepsilon}~~{\rm for}~{\rm
some}~1 \leq i \leq \sigma) \leq L |v_0|^{-N},
\]
where the inequality follows after integrating both sides of
(\ref{fc1}) over the event $\{ \sigma \geq i \}$ and recalling
that $\sigma \leq L$. Since $N$ was arbitrary and $L \leq |v_0|^4$
for all sufficiently large $|v_0|$, this implies that
\begin{equation} \label{ccap}
\mathrm{P}(C_\sigma \geq k|v_0|/4 ) \leq  |v_0|^{-N}
\end{equation}
if $|v_0|$ is sufficiently large.
Let
\begin{equation}
\label{SVMT} f_j = \sum_{i=1}^{j \wedge \sigma}
\int_{\tau_{i-1}}^{\eta_i}\widetilde{F}_{i-1}(z_{i-1}(s)) d s,~~ j
\geq 1.
\end{equation}
Notice that $f_j$ is a martingale vector. Indeed,
\[
\mathrm{E}( f_{j} - f_{j-1}|\mathcal{G}_{{i-1}}) = \chi_{\{j \leq
\sigma\}} \mathrm{E}(
\int_{\tau_{j-1}}^{\eta_j}\widetilde{F}_{j-1}(z_{j-1}(s)) d s
|\mathcal{G}_{{i-1}}).
\]
Let $\widetilde{\mathcal{G}}_n$ be the $\sigma$-algebra determined
by the Poisson field $r^n_i$ (see the definition of the random
field $F_n$ in Section~\ref{tdis}). Then the last conditional
expectation can be written as
\[
\mathrm{E}(
\int_{\tau_{j-1}}^{\eta_j}\widetilde{F}_{j-1}(z_{j-1}(s)) d s
|\mathcal{G}_{{i-1}}) =  \mathrm{E}( \mathrm{E}(
\int_{\tau_{j-1}}^{\eta_j}\widetilde{F}_{j-1}(z_{j-1}(s)) d s|
\sigma(\mathcal{G}_{{i-1}} \cup \widetilde{\mathcal{G}}_{{i-1}}
))|\mathcal{G}_{{i-1}} )
\]
The inner conditional expectation is equal to zero since the
functions $f^n_i$ from the definition of the fields
$\widetilde{F}_n$ are independent of the Poisson fields, and are
symmetrically distributed by (\ref{symmetr}).

We shall denote the components of the vector $f_j$ by $f_j^a$, $1
\leq a \leq d$.  The quadratic variation of $f_j$ is
\[
\langle f^a, f^b \rangle_j = \sum_{i=1}^{j \wedge \sigma}
\mathrm{E} \left(
(\int_{\tau_{i-1}}^{\eta_i}\widetilde{F}^a_{i-1}(z_{i-1}(s)) d
s)(\int_{\tau_{i-1}}^{\eta_i}\widetilde{F}^b_{i-1}(z_{i-1}(s)) d
s) | \mathcal{G}_{{i-1}} \right),~~ j \geq 1.
\]
 Using arguments similar to those
in the proof of Lemma~\ref{ftt01} (see the justification of
formula (\ref{pp2a11})), it is easy to show that for
 $p \in \mathbb{N}$  there is a constant $k''_p$ such that for all $j \leq
L$ we have
\[
\mathrm{E} |\langle f \rangle_j|^p \leq k''_p(j |v_0|^{-2 +
\delta/2})^p
\]
%
 if $|v_0|$ is sufficiently large, where $|\langle f \rangle_j|$ stands for
 the norm of the matrix $\langle f^a, f^b \rangle_j$. In particular, for $j = L$ we
 obtain
 \[
\mathrm{E} |\langle f \rangle_\sigma|^p \leq k'''_p(|v_0|^{2 -
\delta/2})^p.
 \]
%
By the Chebyshev Inequality and the Martingale Moment Inequality,
for each $N$ there are $p \in \mathbb{N}$ and $K_p >0$ such that
\[
\mathrm{P}(|f_\sigma| \geq k|v_0|/4 ) = \mathrm{P}(|f_\sigma|^{2p}
\geq (k|v_0|/4)^{2p} ) \leq \frac{ \mathrm{E}
|f_\sigma|^{2p}}{(k|v_0|/4)^{2p}} \leq \frac{ K_p \mathrm{E}
|\langle f \rangle_\sigma|^{p}}{(k|v_0|/4)^{2p}} \leq |v_0|^{-N}
\]
if $|v_0|$ is sufficiently large.

 Together
with (\ref{ccap}), this implies that
\[
\mathrm{P}(\sigma < L )  \leq |v_0|^{-N}.
\]
It easily follows from Corollary~\ref{fttt} that
\[
\mathrm{P}( \{ \sigma = L \} \cap \bigcup_{m=1}^{L}\{
\sup_{\tau_{m-1} \leq t\leq \tau_m} | \dot{Y}(t) - v_{m-1}| \geq k
|v_0|/2 \} )  \leq |v_0|^{-N}.
\]
 \qed

\section{Long time behavior of $Y(t).$} \label{ltbe}
The goal of this section is to show that the paths of $Y(t)$ are
not self-intersecting with probability close to one, and therefore
the distributions of $X$ and $Y$ are close, as claimed. This is
achieved in subsection \ref{SSSI}. In subsection \ref{SSDerGr} we
establish some {\it a priori} bounds on the growth of $|\dot{Y}|$.

\subsection{Behavior of $|\dot{Y}(t)|$ as $t \rightarrow \infty$.}
\label{SSDerGr}
In this section we shall demonstrate that for large $|v_0|$ with
high probability the norm of the velocity vector $|\dot{Y}(t)|$
grows as $t^{1/3}$ when $t \rightarrow \infty$.

The idea of the proof is the following. We consider $\dot{Y}$ at
the moments $s_n$ its modulus crosses $2^l$ (alternating odd and
even $l$). By Lemma \ref{ole} and \eqref{BiasUp},
$\ln|\dot{Y}(s_n)|$ can be well approximated by a simple random
walk biased to the right. It follows that $|\dot{Y}(s_n)|$ grows
exponentially and so $|\dot{Y}(t)|$ spends most of the time near
its maximum. By Lemma \ref{ole}, $s_{n+1}-s_n$ is of order
$|\dot{Y}(s_n)|^3$, which implies the desired result. Let us now
give a detailed proof.

We start by describing a discretized  version of the process $|
\dot{Y}(t)|$. Let $2^{m - \frac{1}{2}} \leq |v_0| <
2^{m+\frac{1}{2}}$ for some $m \in \mathbb{Z}$. Let $0 < \delta <
1 $. Define, inductively, a sequence of events $
\mathcal{E}_n^\delta$ and three processes $s_n, t_n \in
\mathbb{R}^+ \cup \infty$ and $\xi_n \in \mathbb{Z}$ as follows.
Let $\mathcal{E}_0^\delta = {\Omega}$, $s_0 = t_0 = 0$, and $\xi_0
= m$.
%
%
 Assume that $\mathcal{E}_{n-1}^\delta$, $s_{n-1}$,
$t_{n-1}$, and $\xi_{n-1}$ have been defined for some $n \geq 1$.
We then define
\[
s_n = \inf\{t:| \dot{Y}(t)| = 2^{\xi_{n-1}-1} ~{\rm or}~|
\dot{Y}(t)| = 2^{\xi_{n-1}+1} \},
\]
\[
t_n  = \min\{ \tau_k: \tau_k \geq s_n \},~~{\rm and}~~\xi_n =
\log_2| \dot{Y}(s_n)|.
\]
\[
\mathcal{E}_n^\delta = \mathcal{E}_{n-1}^\delta \cap \{ t_n <
\infty \} \cap
 \{ \tau_{k+1} - \tau_k \leq
|v_k|^{-1+\delta}~{\rm for}~{\rm all}~k~{\rm such}~{\rm that}~
t_{n-1} < \tau_k \leq t_n  \} \cap
\]
\[
\cap \{ \sup_{\tau_k \leq t \leq \tau_{k+1}} |\dot{Y}(t) - v_k|
\leq |v_k|^{-1 + \delta}~{\rm for}~{\rm all}~k~{\rm such}~{\rm
that}~t_{n-1} \leq \tau_k \leq t_n \}.
\]

Let $ \mathcal{F}_n$ be the $\sigma$-algebra of events determined
before $t_n$, that is $ \mathcal{F}_n = \sigma (\cup_{m:  \tau_m
\leq t_n} \mathcal{G}_m)$. The process $\xi_n$ can be viewed as a
random walk (with memory and random transition times), while $t_n$
can be viewed as transition times for the random walk. Note that
the process $|\dot{Y}(t)|$ takes values equal to powers of 2 at
times $s_n$. It is more convenient, however, to consider times
$t_n$ (which are close to times $s_n$, but coincide with the
stopping times $\tau_k$), and the $\sigma$-algebras  $
\mathcal{F}_n$ are defined using the times $t_n$.

 The following  lemma describes the one-step transition times
and transition probabilities.
\begin{lemma} \label{qqaax}

{\rm (a)}  $ \mathcal{E}_n^\delta$ is $ \mathcal{F}_n$-measurable.
For each $N
> 0$ there is $M$ such that for $m \geq M$ we
have
\begin{equation} \label{first1}
{\mathrm{P}}(\mathcal{E}_{n}^\delta| \mathcal{F}_{n-1}) \geq 1 -
2^{-Nm} ~~{\it almost}~{\it surely}~{\it on}~ \{\xi_{n-1} =m \}
\cap \mathcal{E}_{n-1}^\delta.
\end{equation}
{\rm (b)} For each $N > 0$
 there  exist $M$ and $0 < c < 1$, such that for $m \geq M$ we
have
\[
{\mathrm{P}}(t_n - t_{n-1} > 2^{3m} k| \mathcal{F}_{n-1}) \leq c^k
+2^{-Nm} ,~~{k \geq 1},~ {\it almost}~{\it surely}~{\it on}~
\{\xi_{n-1} =m \}\cap \mathcal{E}_{n-1}^\delta.
\]
{\rm (c)}
 There  exist $M$ and $0 < c < 1$, such that for $m \geq M$
and $n \geq 2$ we have
\[
{\mathrm{P}}(t_n - t_{n-1} < 2^{3m}| \mathcal{F}_{n-1}) \leq
c~~{\it almost}~{\it surely}~{\it on}~ \{\xi_{n-1} =m \}\cap
\mathcal{E}_{n-1}^\delta.
\]
{\rm (d)} There is $p
> 1/2$ such that
for each $\varepsilon >0$ there exists $M$,  such that for $m \geq
M$ and $n \geq 2$ we have
\[
 |{\mathrm{P}}(\xi_n = \xi_{n-1}+1|
\mathcal{F}_{n-1} ) - p | \leq \varepsilon~~{\it almost}~{\it
surely}~{\it on}~ \{\xi_{n-1} =m \}\cap \mathcal{E}_{n-1}^\delta.
\]
\end{lemma}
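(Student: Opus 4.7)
The plan is to handle all four statements by first reducing, via the strong Markov property \eqref{smp2} applied at the stopping time $t_{n-1}$, to the analogous statement for the auxiliary process $Z(t)$ started from $w_0 := \dot{Y}(t_{n-1})$. Indeed, $t_n$ always lies in $\{\tau_0, \tau_1, \dots\}$ by construction, and for $n \geq 2$ one has $t_{n-1} \geq \tau_1$ together with the $\mathcal{G}_k$-measurability of $\{t_{n-1} \leq \tau_k\}$, so \eqref{smp2} applies; the case $n = 1$ of (a) and (b) requires no Markov reduction since $t_0 = 0$. On $\{\xi_{n-1} = m\} \cap \mathcal{E}_{n-1}^{\delta}$ the clause on $\sup |\dot{Y} - v_k|$ in the definition of $\mathcal{E}_{n-1}^{\delta}$ forces $\bigl| |w_0| - 2^m \bigr| \leq |w_0|^{-1+\delta}$, so $|w_0|/2^m \to 1$ uniformly as $m \to \infty$, and the direction of $w_0$ is irrelevant by isotropy of $F$. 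All subsequent work thus concerns the process $Z$ started at speed close to $2^m$.

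Parts (a) and (b) follow essentially from material already in the paper. For (a), the $\mathcal{F}_n$-measurability of $\mathcal{E}_n^{\delta}$ is immediate by induction, since each new clause added at step $n$ is determined by the trajectory up to $t_n$; the quantitative bound then follows by applying Corollary~\ref{fie} with $r = 2^{m-1}$ and $b = 8$, the enlargement from $4$ to $8$ being used to absorb the small fluctuations between consecutive $\tau_k$ and to guarantee $t_n \leq \check{\tau}^{b}_{k(n-1)}$. For (b), with the same $r$ and $b$, the difference $t_n - t_{n-1}$ is dominated by $\check{\tau}^{b}_{k(n-1)}$ for the $Z$ process, and choosing the parameter $a$ in the estimate \eqref{iiu} so that $2ar^3 \sim 2^{3m}$ delivers the tail bound $c^k + 2^{-Nm}$.

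Parts (c) and (d) are the core. We apply Lemma~\ref{ole} to the process $Z$ started at $w_0$: $\dot{Z}(|w_0|^3 t)/|w_0|$ converges weakly to $\overline{V}(t)$ starting from the unit vector $w_0/|w_0|$. For (c), the event $\{t_n - t_{n-1} < 2^{3m}\}$ translates, modulo the negligible correction from $s_n$ to $t_n$ and from $|w_0|$ to $2^m$, into the event $\{|\overline{V}(t)| \text{ exits } (1/2, 2) \text{ before time } 1\}$, whose probability is strictly less than $1$ because $|\overline{V}|$ is a continuous diffusion that needs positive time to reach either boundary. For (d), the event $\{\xi_n = \xi_{n-1} + 1\}$ translates into $\{|\overline{V}| \text{ reaches } 2 \text{ before } 1/2, \text{ starting from } 1\}$; by self-similarity this probability does not depend on the starting radius, and by \eqref{BiasUp} it equals some universal $p > 1/2$.

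The main obstacle is the passage from weak convergence on the Skorokhod space (Lemma~\ref{ole}) to convergence of the exit-time and exit-location distributions needed in (c) and (d). This is resolved by the fact that $|\overline{V}|$ is a non-degenerate one-dimensional diffusion on $(0, \infty)$: the levels $\{1/2\}$ and $\{2\}$ are regular, so the exit time from $(1/2, 2)$ and the exit location are $\mathrm{P}$-a.s. continuous functionals of the path, and weak convergence therefore upgrades to convergence of their laws. The only remaining point is uniformity in the starting velocity $w_0$ over the tight family $|w_0|/2^m \to 1$; this follows from the continuous dependence of the limit diffusion on its initial condition and from the isotropy of both $F$ and $\overline{V}$, which eliminates dependence on the direction of $w_0$.
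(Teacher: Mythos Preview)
Your proposal is correct and follows essentially the same route as the paper: measurability by inspection, Corollary~\ref{fie} for (a), the tail bound~\eqref{iiu} for (b), and Lemma~\ref{ole} combined with~\eqref{ebel} and~\eqref{BiasUp} for (c) and (d). The paper uses $b=4$ rather than your $b=8$, and is terser about the Markov reduction and about why weak convergence suffices for the exit-time and exit-location functionals; your added remarks on regularity of the levels $1/2$ and $2$ for $|\overline{V}|$ and on uniformity in $w_0$ make those steps explicit but do not change the argument.
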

\proof (a) The fact that $ \mathcal{E}_n^\delta$ is $
\mathcal{F}_n$-measurable follows from the definition of $
\mathcal{E}_n^\delta$ and $ \mathcal{F}_n$.

Next, observe that if $\mathcal{E}_{n-1}^\delta$ happens, then to
ensure that $\mathcal{E}_{n}^\delta$ happens we need to exclude
three events: $\{t_n=\infty\},$ $\{\tau_{k+1}-\tau_k>
|v_k|^{-1+\delta}$ for some $t_{n-1}\leq \tau_k\leq t_n\}$ and
$$\{\sup_{\tau_k\leq t\leq \tau_{k+1}} |\dot{Y}-v_k| >
|v_k|^{-1+\delta} \text{ for some } t_{n-1}\leq \tau_k\leq t_n \}
. $$

Therefore \eqref{first1} follows from
Corollary~\ref{fie} with $b=4$ and $r = 2^{m-1}.$

(b) The statement follows from (\ref{iiu}) once we notice that $
|\dot{Y}(t_{n-1})| \in (2^{m-1}, 2^{m+1})$ on $\{\xi_{n-1} =m
\}\cap \mathcal{E}_{n-1}^\delta$ if $m$ is sufficiently large.

(c) This follows  from Lemma~\ref{ole} once we take into account
that, by the definition of  $ \mathcal{E}_{n-1}^\delta$, for $n
\geq 2$ and all sufficiently large $m$ we have
\begin{equation} \label{ebel}
||\dot{Y}(t_{n-1})| - 2^m| \leq 1~~{\rm on}~ \{\xi_{n-1} =m \}\cap
\mathcal{E}_{n-1}^\delta.
\end{equation}

(d) Consider the limiting process $\overline{V}(t)$ with $
|\overline{V}(0)| = 1$.  Let $p$ be the probability that the
process $ |\overline{V}(t)|$ reaches $2$ before reaching $1/2$.
Notice that $p > 1/2$.  Therefore, the statement follows from
Lemma~\ref{ole} and \eqref{ebel}.
 \qed
\begin{lemma}
\label{ffi}
For  $\delta > 0$ we have
\[
\lim_{|v_0| \rightarrow \infty} {\mathrm{P}}\left( (|v_0| +
t^{1/3})^{1 - \delta} \leq | \dot{Y}(t)| \leq (|v_0| +t^{1/3})^{1
+ \delta} ~~{\it for}~{\it all}~t \geq 0 \right)  = 1.
\]
\end{lemma}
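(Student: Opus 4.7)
The plan is to exploit the discrete-time process $\xi_n$ introduced just above the lemma, together with the one-step estimates in Lemma~\ref{qqaax}. By part~(d) of that lemma, on the event $\mathcal{E}_{n-1}^\delta \cap \{\xi_{n-1} = k\}$ with $k$ large, the conditional probability that $\xi_n = \xi_{n-1} + 1$ exceeds some fixed $p - \varepsilon > 1/2$. This lets me couple $\xi_n - \xi_0$ from below by a biased random walk with drift $2(p-\varepsilon) - 1 > 0$, while the trivial step bound gives $\xi_n - \xi_0 \leq n$. A Chernoff estimate plus a union bound over all $n \geq 1$ produces constants $\alpha \in (0, 1/2)$ and $\beta > 0$ for which, with probability at least $1 - e^{-\beta m_0}$ (where $2^{m_0} \asymp |v_0|$), the bound $\alpha n \leq \xi_n - \xi_0 \leq (1-\alpha)n$ holds for all $n \geq n_0$. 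By part~(a) iterated, $\bigcap_{n \geq 1} \mathcal{E}_n^\delta$ has probability tending to $1$ as $|v_0| \to \infty$, since the one-step failure probability on $\{\xi_{n-1} = k\}$ decays as $2^{-Nk}$ and $\xi_n$ grows linearly.

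Next, parts~(b) and~(c) of Lemma~\ref{qqaax} yield matching two-sided estimates for the transition times $t_n - t_{n-1}$ on the scale $2^{3\xi_{n-1}}$. Part~(b) gives an exponential tail for $(t_n - t_{n-1})/2^{3\xi_{n-1}}$, so summing over $n$ and applying Borel--Cantelli forces $t_n - t_{n-1} \leq n^2 \cdot 2^{3\xi_{n-1}}$ eventually; part~(c), iterated, yields $t_n - t_{n-1} \geq c \cdot 2^{3\xi_{n-1}}$ on a positive-density subset of indices. Because $\xi_{n-1}$ grows linearly in $n$ with rate $\alpha$, the sum $t_n = \sum_{k<n}(t_{k+1} - t_k)$ is essentially geometric and dominated by its last term, so $c_1 \cdot 2^{3\xi_{n-1}} \leq t_n \leq n^2 \cdot 2^{3\xi_{n-1}}$. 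Combining with $\alpha n \leq \xi_n \leq (1-\alpha)n + m_0$ and absorbing the polynomial factor in $n$ into a small exponent correction gives $|\dot{Y}(t_n)| = 2^{\xi_n} \in [t_n^{\,1/3 - \delta/2},\, t_n^{\,1/3 + \delta/2}]$ for all large $n$ with high probability.

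To pass from the discrete times $t_n$ to all $t \geq 0$, note that on $\mathcal{E}_n^\delta$ the process $|\dot{Y}|$ stays in $[2^{\xi_{n-1}-1}, 2^{\xi_{n-1}+1}]$ throughout $[t_{n-1}, s_n]$ by definition of $s_n$, and fluctuates by at most $|v_k|^{-1+\delta}$ on each sub-interval $[\tau_k, \tau_{k+1}] \subseteq [s_n, t_n]$. Since $t_{n-1}$ and $t_n$ are both of order $2^{3\xi_{n-1}}$, the two-sided bound transfers to all $t \in [t_{n-1}, t_n]$. For $0 \leq t \leq t_1$ (of order $|v_0|^3$ with high probability), the definition of $s_1$ forces $|\dot{Y}(t)| \in [2^{m_0-1}, 2^{m_0+1}]$, whose ratio with $|v_0|$ is bounded; since $|v_0| + t^{1/3} \asymp |v_0|$ throughout this range, the envelope $(|v_0| + t^{1/3})^{1 \pm \delta}$ easily accommodates the fluctuation, and Lemma~\ref{ccf} can be invoked for the very short scale $t \leq |v_0|^{3-\delta}$ as a sanity check.

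The main obstacle is making all of the above hold simultaneously, uniformly in $t \geq 0$, on a single event of probability tending to $1$ as $|v_0| \to \infty$. This requires summable failure probabilities at each of the infinitely many steps $n$; fortunately the $|v_0|^{-N}$ and $2^{-Nk}$ tail bounds furnished by Corollary~\ref{fie} and Lemma~\ref{qqaax}(a) decay faster than any polynomial, so union bounds over $n \geq 1$ converge, and all multiplicative and polynomial-in-$n$ correction factors are absorbed into the $\delta$-exponent once $|v_0|$ is taken sufficiently large.
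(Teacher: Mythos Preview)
Your plan follows the same architecture as the paper's proof: control the random walk $\xi_n$ via Lemma~\ref{qqaax}(a),(d), control the transition times $t_n-t_{n-1}$ on the scale $2^{3\xi_{n-1}}$ via (b),(c), combine to get $2^{\xi_n}\asymp t_n^{1/3}$, then interpolate. However, one step does not go through as written.

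The problem is the claimed upper bound $t_n\le n^2\cdot 2^{3\xi_{n-1}}$. You justify it by saying ``the sum $t_n=\sum_{k<n}(t_{k+1}-t_k)$ is essentially geometric and dominated by its last term, because $\xi_{n-1}$ grows linearly in $n$ with rate $\alpha$.'' But your two-sided bound $\alpha n\le \xi_n-\xi_0\le (1-\alpha)n$ with a \emph{fixed} $\alpha\in(0,1/2)$ does not force the sequence $2^{3\xi_k}$ to be dominated by its last term: it only gives $\max_{k\le n}\xi_k\le\xi_0+(1-\alpha)n$ while $\xi_{n-1}\ge\xi_0+\alpha(n-1)$, so the gap $\max_{k\le n}\xi_k-\xi_{n-1}$ can be as large as $(1-2\alpha)n$. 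Plugging this in, $t_n$ is bounded only by $\text{poly}(n)\cdot 2^{3\xi_{n-1}+3(1-2\alpha)n}$, and the extra factor $2^{3(1-2\alpha)n}$ is \emph{not} polynomial in $n$; it cannot be absorbed into a $t_n^{\delta/2}$ correction for small $\delta$. Tracing this through, the lower bound $2^{\xi_n}\ge t_n^{(1-\delta)/3}$ would require $\delta(1-\alpha)\ge(1-\delta)(1-2\alpha)$, which fails when $\alpha$ and $\delta$ are both small.

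The fix is exactly what the paper does: use the full strength of Lemma~\ref{qqaax}(d), which pins the step probability to $p\pm\varepsilon$ on \emph{both} sides, and obtain the sharper concentration $|\xi_n-pn-\xi_0|\le\varepsilon(n+\xi_0)$ for all $n$, with $\varepsilon>0$ chosen small depending on $\delta$. Then $\max_{k\le n}\xi_k-\xi_{n-1}\le 2\varepsilon(n+\xi_0)$, the exponential overshoot becomes $2^{O(\varepsilon)\xi_n}=(2^{\xi_n})^{O(\varepsilon)}$, and this \emph{is} absorbed into the $\delta$-exponent. The paper organizes this via the events $A_{v_0}$, $B_{v_0}$, $C_{v_0}$; in particular the lower bound on $t_n$ is obtained not through a ``positive-density subset'' argument but by observing that $t_n$ small forces $\varepsilon(n+\xi_0)$ \emph{consecutive} increments $t_k-t_{k-1}$ to be small, each of conditional probability at most $c<1$ by part~(c).
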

\proof
Let  $ \overline{\varepsilon}$ be fixed and $\varepsilon$ be a
positive constant, to be specified later. Let $A_{v_0}$ be the
following event
\[
A_{v_0} =(\bigcap_{n=0}^\infty \mathcal{E}_{n}^\varepsilon ) \cap
\{ |\xi_n - p n - \xi_0| \leq \varepsilon (n + \xi_0)~~{\rm
for}~{\rm all}~n \}
\]
($p$ is the constant from Lemma \ref{qqaax}(d)).

From parts (a) and (d) of Lemma~\ref{qqaax} it easily follows that
we can take a large enough $M$ such that
\begin{equation} \label{event}
{\mathrm{P}} \left( A_{v_0}  \right) \geq 1 -
\overline{\varepsilon}/3
\end{equation}
if $v_0$ is such that $\xi_0 > M$. By part (b) of
Lemma~\ref{qqaax},
\[
{\mathrm{P}} \left(A_{v_0}  \cap \{ t_n - t_{n-1} \geq k(n) 2^{3(
p(n-1) + \xi_0 +  \varepsilon (n-1 + \xi_0))} \} \right) \leq
c^{k(n)} + 2^{-N(p(n-1) + \xi_0 -  \varepsilon (n-1 + \xi_0))}
\]
 for
each $n$, where $0 < c < 1$.
 Take $k(n) =
2^{\varepsilon (n + \xi_0)}$. Let
\[
B_{v_0} = \{ t_n - t_{n-1} \geq 2^{\varepsilon (n + \xi_0)}  2^{3(
p(n-1) + \xi_0 +  \varepsilon (n-1 + \xi_0))}~{\rm for}~{\rm
some}~n \}.
\]
Then
\[
{\mathrm{P}}(A_{v_0} \cap B_{v_0}) \leq \sum_{n=1}^\infty \left(
c^{ 2^{\varepsilon (n + \xi_0)}} + 2^{-N(p(n-1) + \xi_0 -
\varepsilon (n-1 + \xi_0))} \right).
\]
The right-hand side of this inequality can be made smaller than $
\overline{\varepsilon}/3$ by taking sufficiently large $M$.

Notice that for each $a(n)$ and $\overline{k}(n)$
\begin{equation} \label{incljj}
 A_{v_0} \cap \{ t_n < a(n) \}
\subseteq  A_{v_0} \cap \{ t_n - t_{n-1} < a(n) \} \cap ... \cap
\{ t_{n - \overline{k}(n)} - t_{n-\overline{k}(n) -1} < a(n) \}.
\end{equation}
Let $\overline{k}(n) = \varepsilon (n + \xi_0)$ and $a(n) =
2^{3(p(n-1-\overline{k}(n)) + \xi_0 -  \varepsilon(n-1 +
\xi_0))}$. By part (c) of Lemma~\ref{qqaax}, the probability of
the event in the right-hand side of (\ref{incljj}) is estimated
from above by $c^{\overline{k}(n) + 1}$, where $0 < c < 1$. Let
\[
C_{v_0} = \{ t_n < 2^{3(p(n-1-\varepsilon (n + \xi_0)) + \xi_0 -
\varepsilon(n-1 + \xi_0))}~{\rm for}~{\rm some}~n \}.
\]
Then
\[
{\mathrm{P}}(A_{v_0} \cap C_{v_0}) \leq \sum_{n=1}^\infty
 c^{\varepsilon (n + \xi_0) + 1}.
 \]
The right-hand side of this inequality can be made smaller than $
\overline{\varepsilon}/3$ by taking sufficiently large $M$. We
have thus obtained that
\[
{\mathrm{P}}(A_{v_0} \setminus ( B_{v_0} \cup C_{v_0}) ) \geq 1 -
\overline{\varepsilon}.
\]
On the event $ A_{v_0} \setminus ( B_{v_0} \cup C_{v_0}) $ we have
\[
|\xi_n - p n - \xi_0| \leq  \varepsilon (n +  \xi_0)~~~{\rm
for}~{\rm all}~n;
\]
\[
t_n - t_{n-1} \leq  2^{3( p(n-1) + \xi_0 +  \varepsilon (n-1 +
\xi_0)) + \varepsilon (n + \xi_0)}~~ {\rm for}~{\rm all}~n;
\]
\[
t_n \geq 2^{3(p(n-1-\varepsilon (n + \xi_0)) + \xi_0 -
\varepsilon(n-1 + \xi_0))} ~~ {\rm for}~{\rm all}~n.
\]
Since $\varepsilon$ can be taken arbitrarily small, these three
inequalities imply that for each $\delta > 0$
\[
(2^{\xi_0} + t_n^{\frac{1}{3}})^{1 - \delta} \leq 2^{\xi_n} \leq
(2^{\xi_0} +t_n^{\frac{1}{3}})^{1 + \delta} ~~{\rm for}~{\rm
all}~n \geq 0
\]
on $ A_{v_0} \setminus ( B_{v_0} \cup C_{v_0}) $, provided that
$M$ is sufficiently large. This implies the statement of the lemma
since $ 2^{\xi_n -2 } \leq | \dot{Y}(t)| \leq 2^{\xi_n+2}$ for $
t_n \leq t \leq t_{n+1}$ on $ A_{v_0} \setminus ( B_{v_0} \cup
C_{v_0}) $ due to (\ref{ebel}). \qed
\begin{corollary}
For $\delta > 0$ we have
\begin{equation} \label{fr1}
\lim_{|v_0| \rightarrow \infty}{\mathrm{P}}\left(|v_n|^{-1} \leq
\tau_{n +1} - \tau_n \leq |v_n|^{-1 + \delta}~ {\it for}~{\it
all}~n \geq 0 \right) = 1,
\end{equation}
\begin{equation} \label{fr2}
\lim_{|v_0| \rightarrow \infty} {\mathrm{P}}\left( (n |v_0|^{-1} +
n^{3/4}) ^{1 - \delta} \leq \tau_n \leq   (n |v_0|^{-1} + n^{3/4})
^{1 + \delta} ~~{\it for}~{\it all}~n \geq 0\right) =1.
\end{equation}
\end{corollary}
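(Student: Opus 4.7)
The plan is to derive \eqref{fr1} from Corollary~\ref{fttt} by a Borel--Cantelli summation, and then \eqref{fr2} from \eqref{fr1} by a discrete comparison argument. Throughout fix a small $\eta>0$ (to be chosen in terms of $\delta$) and work on the event
\[
G_{v_0}=\{(|v_0|+t^{1/3})^{1-\eta}\leq|\dot Y(t)|\leq(|v_0|+t^{1/3})^{1+\eta}\text{ for all }t\geq 0\},
\]
whose probability tends to $1$ as $|v_0|\to\infty$ by Lemma~\ref{ffi}. In particular $|v_n|\geq(|v_0|+\tau_n^{1/3})^{1-\eta}\geq|v_0|^{1-\eta}>r$ for every $n$ once $|v_0|$ is large, so the hypothesis $|v_n|>r$ of Corollary~\ref{fttt} is automatically met on $G_{v_0}$.

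For \eqref{fr1}, the lower bound is deterministic: the construction of $\tau_{n+1}$ forces $\tau_{n+1}-\tau_n\geq l|v_n|^{-1}\geq|v_n|^{-1}$ since $l=4R+1\geq 1$. This deterministic half combined with $G_{v_0}$ gives $\tau_n\geq l\sum_{k=0}^{n-1}(|v_0|+\tau_k^{1/3})^{-(1+\eta)}$, and a routine induction (treating the regimes $\tau_k^{1/3}\leq|v_0|$ and $\tau_k^{1/3}\geq|v_0|$ separately) yields $\tau_n\geq cn|v_0|^{-(1+\eta)}$ for $n\leq|v_0|^{4+\eta}$ and $\tau_n\geq cn^{3/(4+\eta)}$ for $n\geq|v_0|^{4+\eta}$; hence $|v_n|\geq cn^{(1-\eta)/(4+\eta)}\geq cn^{1/5}$ in the second regime when $\eta$ is small. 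Applying Corollary~\ref{fttt} (and Lemma~\ref{ftt01} for $n=0$) with a large integer $N$,
\[
\sum_{n=0}^{\infty}\mathrm{P}(\tau_{n+1}-\tau_n>|v_n|^{-1+\delta},\ G_{v_0})\leq|v_0|^{4+\eta}\cdot|v_0|^{-N(1-\eta)}+c\sum_{n>|v_0|^{4+\eta}}n^{-N/5},
\]
both of which tend to $0$ as $|v_0|\to\infty$ once $N$ is large (say $N=50$); together with $\mathrm{P}(G_{v_0}^c)\to 0$ this proves \eqref{fr1}.

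Assume now that both \eqref{fr1} and $G_{v_0}$ hold. Summing the bounds in \eqref{fr1} and inserting $|v_k|\in[(|v_0|+\tau_k^{1/3})^{1-\eta},(|v_0|+\tau_k^{1/3})^{1+\eta}]$ yields
\[
\sum_{k=0}^{n-1}(|v_0|+\tau_k^{1/3})^{-(1+\eta)}\leq\tau_n\leq\sum_{k=0}^{n-1}(|v_0|+\tau_k^{1/3})^{-(1-\delta)(1-\eta)}.
\]
This is a discrete analogue of $d\tau/dn=(|v_0|+\tau^{1/3})^{-1+O(\delta+\eta)}$, the exact solution of which satisfies $\tau\sim n|v_0|^{-1}$ for $n\leq|v_0|^4$ and $\tau\sim n^{3/4}$ for $n\geq|v_0|^4$. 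Standard comparison then yields $(n|v_0|^{-1}+n^{3/4})^{1-O(\delta+\eta)}\leq\tau_n\leq(n|v_0|^{-1}+n^{3/4})^{1+O(\delta+\eta)}$, and choosing $\eta$ small relative to $\delta$ absorbs the errors into the exponent $1\pm\delta$, establishing \eqref{fr2}.

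The main technical point is the circularity between Corollary~\ref{fttt}, whose estimate is conditional on $|v_n|$ being large, and the desired polynomial lower bound on $|v_n|$, which itself would normally require \eqref{fr1}. The resolution is to use only the free deterministic inequality $\tau_{n+1}-\tau_n\geq l|v_n|^{-1}$ together with Lemma~\ref{ffi} to bootstrap a polynomial lower bound on $|v_n|$, and only then to invoke the probabilistic estimate in Corollary~\ref{fttt}.
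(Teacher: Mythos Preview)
Your approach is correct and coincides with the paper's, which compresses the whole argument into two lines: ``The first statement easily follows from (\ref{bb1}). Then (\ref{fr2}) follows from Lemma~\ref{ffi} and (\ref{fr1}) (considering the cases $\tau_n < |v_0|^3$ and $\tau_n \geq |v_0|^3$ separately).'' You have simply unpacked these lines, and in particular made explicit the point the paper suppresses: that (\ref{bb1}) is conditional on $|v_n|>r$, so one must first secure a polynomial lower bound on $|v_n|$ via the deterministic inequality $\tau_{n+1}-\tau_n\geq l|v_n|^{-1}$ together with the upper half of Lemma~\ref{ffi}, before the Borel--Cantelli summation goes through. Your two regimes $n\lessgtr|v_0|^{4+\eta}$ are exactly the paper's $\tau_n\lessgtr|v_0|^3$.
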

\proof The first statement easily follows from (\ref{bb1}). Then
(\ref{fr2}) follows from Lemma~\ref{ffi}  and (\ref{fr1})
(considering the cases $\tau_n < |v_0|^3$ and $\tau_n \geq |v_0|^3$
separately). \qed

Let $D^\delta_{v_0}$ be the following event
\[
D^\delta_{v_0} = \{ (|v_0| + t^{1/3})^{1 - \delta} \leq |
\dot{Y}(t)| \leq (|v_0| +t^{1/3})^{1 + \delta} ~~{\it for}~{\it
all}~t \geq 0 \} \cap
\]
\[
\cap \{ |v_n|^{-1} \leq \tau_{n +1} - \tau_n \leq |v_n|^{-1 +
\delta}~ {\it for}~{\it all}~n \geq 0 \} \cap
\]
\[
\cap \{  (n |v_0|^{-1} + n^{3/4}) ^{1 - \delta} \leq \tau_n \leq
(n |v_0|^{-1} + n^{3/4}) ^{1 + \delta} ~~{\it for}~{\it all}~n
\geq 0 \}.
\]
As we saw above,
\[
\lim_{|v_0| \rightarrow \infty}
{\mathrm{P}}\left(D^\delta_{v_0}\right)=1.
\]
The next result provides a more precise information about the growth of
$\dot{Y}(t)$ but only for a fixed value of $t.$

\begin{lemma} \label{last}
We have the following limit
\[
\lim_{|v_0| \rightarrow \infty} \liminf_{a \rightarrow \infty}
\liminf_{t \rightarrow \infty} {\mathrm{P}}\left( \frac{1}{a}
t^{1/3} \leq | \dot{Y}(t)| \leq a t^{1/3}  \right)  = 1.
\]
\end{lemma}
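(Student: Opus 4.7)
The plan is to show that $|\dot Y(t)|/t^{1/3}$ is tight in distribution as $t\to\infty$ by combining the Markov property (\ref{smp2}) applied at a carefully chosen intermediate stopping time with Lemma~\ref{ole} and the non-degeneracy of the sample paths of the limit diffusion $\overline V$ on compact time intervals.

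Fix $\varepsilon>0$ and a constant $C>0$. For $t$ large, define the stopping time $\tau_*:=\min\{\tau_n:|v_n|^3\geq t/(C+1)\}$, which satisfies the measurability required by (\ref{smp2}). Using the bound $|v_n-v_{n-1}|\leq|v_{n-1}|^{-1+\delta}$ from Corollary~\ref{fie}, the velocity $w:=\dot Y(\tau_*)$ satisfies $|w|^3=(t/(C+1))(1+o(1))$ on events of high probability as $t\to\infty$, so $|w|/t^{1/3}\to(C+1)^{-1/3}$ in probability and $|w|\to\infty$. Moreover, on the event $\{\tau_*\leq t\}$, the random time $s_*:=(t-\tau_*)/|w|^3$ lies in the deterministic compact interval $[0,C+1]$.

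By the Markov property (\ref{smp2}) at $\tau_*$, conditionally on the past, $\dot Y(t)$ has the same distribution as $\dot Z_w(|w|^3 s_*)$. Applying Lemma~\ref{ole} to $\dot Z$ and using Skorokhod representation, the rescaled process $\dot Z_w(|w|^3 s)/|w|$ converges to $\overline V_{w/|w|}(s)$ uniformly on $s\in[0,C+1]$ as $|w|\to\infty$. By isotropy of the force field, $|\overline V_u(\cdot)|$ has distribution depending only on $|u|$; and since $|\overline V|^{3/2}$ is a Bessel process of dimension $2d/3>2$ (see the Remark after Corollary~\ref{t2}), its sample paths on $[0,C+1]$ are a.s.\ bounded above and bounded below by positive constants. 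Thus, for suitable $M=M(\varepsilon,C)$, we have $P(|\overline V_{e_1}(s)|\in[1/M,M]\text{ for all }s\in[0,C+1])\geq 1-\varepsilon$, whence on $\{\tau_*\leq t\}$ we obtain
\[
|\dot Y(t)|/t^{1/3}\in\bigl[1/(M(C+1)^{1/3}),\,M(C+1)^{1/3}\bigr]
\]
with probability $\geq 1-2\varepsilon$ for all large $t$. Setting $a:=M(C+1)^{1/3}$ yields the desired two-sided bound.

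The final ingredient is to verify $P(\tau_*\leq t)\to 1$ as $t\to\infty$; equivalently, to rule out the ``stuck low'' scenario in which $|\dot Y(s)|<(t/(C+1))^{1/3}$ for all $s\leq t$. This is handled by a symmetric application of the same Markov--Lemma~\ref{ole}--scaling argument at a stopping time corresponding to a smaller threshold, exploiting the non-recurrence of $\overline V$ for $d>3$ that was used in (\ref{BiasUp}). The main technical obstacle is the joint limit $|w|\to\infty$ with $s_*\in[0,C+1]$ random: Lemma~\ref{ole} provides weak convergence at each fixed $s$, but the argument requires uniformity in $s$ on the compact interval $[0,C+1]$, which follows from the weak convergence of the associated processes together with continuity of the limit paths via Skorokhod representation.
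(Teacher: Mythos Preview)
Your overall architecture matches the paper's: both arguments pick a stopping time at which $|\dot Y|$ first reaches order $t^{1/3}$ (your $\tau_*$, the paper's $t_{n_*}$) and then invoke the Markov property together with Lemma~\ref{ole} to control $|\dot Y(t)|$ via the limit diffusion on a compact time interval. The upper bound and the reduction of the lower bound to the statement $\mathrm P(\tau_*\le t)\to 1$ are essentially the same as in the paper.

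The gap is precisely the last step, which you dismiss as ``a symmetric application of the same Markov--Lemma~\ref{ole}--scaling argument at a stopping time corresponding to a smaller threshold.'' This is not symmetric, and the paper explicitly flags it as the harder direction. For the upper bound one only needs to control a \emph{single} excursion of duration $\sim |w|^3$; for the lower bound one must show that the \emph{total} time needed for $|\dot Y|$ to climb from $|v_0|$ up to $(t/(C+1))^{1/3}$ is at most $t$. Applying your argument at a smaller threshold $L$ merely tells you that, once $|\dot Y|$ is near $L$, it will with probability $p>1/2$ double (and with probability $1-p$ halve) in time of order $L^3$; iterating this produces a biased random walk on dyadic levels, but turning that into the quantitative bound $\tau_*\le t$ requires summing the times spent at \emph{all} intermediate levels and controlling the number of returns to each. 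That is exactly what the paper does: it introduces the occupation times $T(p)=\sum_n (t_{n+1}-t_n)\chi_{\{\xi_n=p\}}$, bounds $\mathrm P(T(p)\ge 2^{3p}k)$ via transience of $\xi_n$ (each level is visited $O(1)$ times) combined with Lemma~\ref{qqaax}(b), and then sums $\sum_p T(p)\lesssim 2^{3p_*}\sim t$ geometrically. Without this occupation-time estimate your argument does not close; Lemma~\ref{ffi} alone is too weak, since it only gives $|\dot Y(t)|\ge t^{(1-\delta)/3}$, which does not force $\tau_*\le t$.
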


\begin{proof}

First let us estimate the probability that $|\dot{Y}(t)|$ is too
large. To this end, let $0 \leq \delta \leq 1/4$,  $m$ be the
largest integer such that $2^m\leq {a t^{1/3}}/{4}$, and $n^*$ be
the first time when $\xi_n=m.$ Then $(t_{n^*+1}-t_{n^*})/ta^3$ is
tight by Lemma \ref{ole}, and therefore $\mathrm{P}(D^\delta_{v_0}
\cap \{t_{n^*+1}-t_{n^*}\leq t \} )\to 0$ as $a\to\infty$
uniformly in $t \geq 1$. Since $\max_{t\leq t_{n^*+1}}
|\dot{Y}(t)|\leq at^{1/3}$ on $ D^\delta_{v_0}$ for large $|v_0|$,
we see that $$ \mathrm{P}(|\dot{Y}(t)|\geq a t^{1/3}) $$ can be
made as small as we wish by choosing $a$ and $|v_0|$ large.

Proving that $\dot{Y}(t)$ is unlikely to be small requires a more
sophisticated argument. We show that $\dot{Y}$ can not be small on
whole $[0, t]$ since this would require that $\dot{Y}$ spends a
lot of time near $2^p$ for some small $p$ contradicting the
transience of $\xi_n.$ On the other hand Lemma \ref{ole} allows us
to rule out the possibility that $\dot{Y}(t)$ is small while
$\dot{Y}(s)$ is large for some $s\in [0,t]$ since $0$ is an
inaccessible point for $\overline{V}(t).$

Let us give the precise argument. Let $n_*=n_*(b,t)$ be the first time when
$|\dot{Y}(t_n)|\geq b t^{1/3}.$
To estimate the probability that $|\dot{Y}(t)|$ is too small, it
is enough to show that
\begin{equation}
\label{SpringUp}
\lim_{|v_0| \rightarrow \infty} \liminf_{b
\rightarrow 0} \liminf_{t \rightarrow \infty} {\mathrm{P}} \left(
t_{n_*}\leq t \right)  = 1
\end{equation}
since, by Lemma \ref{ole}, for fixed $b$ $$
\mathrm{P}\left(\min_{s\in[t_{n_*},t_{n_*}+t]} |\dot{Y}(s)|\leq
\frac{t^{1/3}}{a}\right) $$ can be made as small as we wish by
taking $a$ large  uniformly in $t \geq 1$.

Let $$ T(p)=\sum_{n=1}^\infty \left(t_{n+1}-t_n\right)
\chi_{\{\xi_n=p\}}. $$ Let $p_*=\log_2 (b t^{1/3})+2$. Observe
that on $D_{v_0}^\delta$ we have $$ (1-\delta) \log_2 |v_0| \leq
\xi_n\leq p_* \text{ for } n\leq n_*. $$ Let $$ F_{v_0,
K}=\{T(p)\leq K 2^{3p} \times 2^{p_*-p}~~ \text{for all $p$ such
that}~~ (1-\delta) \log_2 |v_0| \leq p \leq p_*\} . $$

We claim that for each $N > 0$ there are $c_0 >0$, $c<1,$ $p_0>0$
such that for $p\geq p_0$, all $k$ and $v_0$ we have
\begin{equation}
\label{Stick} \mathrm{P}\left(T(p)\geq 2^{3p} k\right)\leq c_0
\sqrt{k} \left(c^{\sqrt{k}}+ 2^{-N p}\right).
\end{equation}
Note that \eqref{Stick} implies that for each $\varepsilon>0$
there exist constants $K, r$ such that for $|v_0|\geq r$ we have $
P(F_{v_0, K})\geq 1-\varepsilon$.   Also note that \eqref{Stick}
implies \eqref{SpringUp} since on $D_{v_0}^\delta\bigcap F_{v_0,
K}$ we have for all sufficiently large $|v_0|$ $$ t_{n_*}\leq
\sum_{p=(1-\delta) \log_2 |v_0|}^{p_*} K 2^{3p} \times
2^{p_*-p}.$$

 To establish \eqref{Stick} we note that $$ \mathrm{P}
\left(\#(n: \xi_n=p)\geq \sqrt{k} \right)\leq c^{\sqrt{k}} $$
since every time $\xi_n$ visits $p$ it has a positive probability
of never returning there. On the other hand by Lemma
\ref{qqaax}(b) $$ \mathrm{P} \left( \max_{n: \xi_n=p}
(t_{n+1}-t_n) \geq 2^{3p} \sqrt{k}|\#(n:\xi_n=p)<\sqrt{k} \right)\leq \sqrt{k}
\left(c^{\sqrt{k}}+ 2^{-N p}\right) $$ so \eqref{Stick} follows.
\end{proof}

\subsection{Probability of a Near Self-Intersection for $Y(t)$}
\label{SSSI}
In this section we prove that if $|v_0|$ is large, then with high
probability the `tail' of the the trajectory $Y(t)$ (the part of
the trajectory corresponding to $t \geq \tau_n$) leaves a
neighborhood of $Y_n$ and then never comes close to the part of
the trajectory corresponding to $t \leq \tau_n$. This allows us to
conclude that switching to a new version of the force field at
each of the times $\tau_n$ does not have a major effect on the
distribution of the solution, that is the distributions of $X(t)$
and $Y(t)$ are the same if we throw out events of small measure
from their respective probability spaces.

Let $\gamma_n$, $n \geq 1$,  be the trajectory of the process
$Y(t)$ between times $\tau_{n-1}$ and $\tau_n$, that is
\[
\gamma_n = \{ Y(t), \tau_{n-1} \leq t \leq \tau_n \}.
\]
Let $\Gamma_n$ be the trajectory of the process after time
$\tau_n$, that is
\[
\Gamma_n = \{ Y(t), \tau_{n} \leq t < \infty \}.
\]
Let $\gamma_n^{2R}$ be the $2R$-neighborhood of $\gamma_n$ and
$\Gamma_n^R$ the $R$-neighborhood of $\Gamma_n$. We shall prove
the following lemma.
\begin{lemma} \label{nntt}
There exists
 $0 < \delta <1$  such that
\begin{equation} \label{ede}
{\mathrm{P}}\left( D^\delta_{v_0} \cap \gamma^{2R}_n \cap
\Gamma^R_{n+1} \neq \emptyset  \right) \leq (|v_0| + n^{1/4})^{-4d
+ 12 - \delta}
\end{equation}
for all sufficiently large $|v_0|$ and all $n \geq 1$.
\end{lemma}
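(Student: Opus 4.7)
The strategy is to apply the Markov property at $\tau_{n+1}$ to reduce the question to a hitting probability for the auxiliary process $Z$, and then use the self-similar limit $\overline{V}$ together with its transience for $d\geq 4$ to estimate this probability. Working on $D^\delta_{v_0}$, Lemma \ref{ffi} and the accompanying estimates give $|v_{n+1}| = c\cdot(1+o(1))$ with $c:=|v_0|+n^{1/4}$, together with the geometric picture: the segment $\gamma_n$ has diameter $O(c^\delta)$ and $|Y_{n+1}-Y_n|\in[l,O(c^\delta)]$, so the translated target $A:=\gamma_n-Y_{n+1}$ is contained in a ball of radius $O(c^\delta)$ located at distance between $l-O(c^\delta)$ and $O(c^\delta)$ from the origin. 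By the Markov property \eqref{smp2} applied at $\tau=\tau_{n+1}$, conditionally on $\mathcal G_{n+1}$ the translated tail $Y(\tau_{n+1}+\cdot)-Y_{n+1}$ has the distribution of $Z$ started with velocity $v_{n+1}$. Thus it suffices to bound, uniformly over admissible $A$, the probability that such a $Z$ trajectory enters the $3R$-neighborhood of $A$.

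Next I would split the trajectory of $Z$ into an initial phase $[0,T_0]$ with $T_0=c^{-1+\delta}$ and a main phase $[T_0,\infty)$. For the initial phase, Lemma \ref{ftt01} applied to $Z$ shows that with probability $1-c^{-N}$ the process travels along a near-straight line of length $\sim l$ in the direction of $v_{n+1}$, hence stays at distance $\geq 3R$ from $A$ (which sits on the ``back'' side of $Y_{n+1}$). Outside an event of probability $c^{-N}$, we may therefore restrict attention to $s\geq T_0$, at which time $|\dot Z(s)|$ is within $c^{-1+\delta}$ of $c$ and $Z(s)$ is at distance $\sim l$ from $A$.

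For the main phase I would pass to the rescaled process $\widehat Z_c(t):=Z(c^3 t)/c^4$, which by Lemma \ref{ole} and the rotational symmetry of the force converges in distribution to the limiting position process $\overline{X}(t)=\int_0^t \overline{V}(s)ds$ with $|\overline V(0)|=1$. Under this rescaling the target $A^{3R}$ becomes a set of diameter $O(c^{\delta-4})$ located at distance $O(c^{\delta-4})$ from the origin, and the question becomes: how likely is $\overline X$ to enter a ball of radius $\rho=O(c^{\delta-4})$ near the origin after an initial displacement of the same order? Using the Bessel dimension $2d/3$ description of $|\overline V|^{3/2}$ (see the Remark after \eqref{position}) and the scaling $\overline V(c^3 t)/c\stackrel{d}{=}\overline V(\cdot)$ restarted at $\overline V(0)/c$, I would derive via the potential of $(\overline V,\overline X)$ and the escape estimate \eqref{BiasUp} a hitting-probability bound of the form $P(\overline X\text{ hits }B_\rho(x))\leq C\rho^{d-3}$ for $|x|\asymp\rho$. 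Combined with the initial-phase estimate and the ingredients $c^{\delta(d-3)}\leq c^{c'\delta}$ for a constant $c'$, this yields a bound of order $c^{-4(d-3)+C\delta}=c^{-4d+12+C\delta}$; a final rescaling of $\delta$ then gives \eqref{ede}.

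The main obstacle is obtaining the hitting-probability bound for $\overline X$. The process $\overline X$ is not Markovian on its own, and the exponent $d-3$ (rather than the classical $d-2$) reflects that the position $\overline X$ inherits its transience from the behavior of $|\overline V|$ rather than from a standard elliptic potential theory. I would handle this by working with $(\overline V,\overline X)$ jointly: use \eqref{BiasUp} iteratively to show $|\overline V|$ doubles with probability $>1/2$ each dyadic scale, so once $\overline X$ has moved away from $A$ the velocity $\overline V$ grows geometrically on a polynomial time scale and forces $|\overline X|$ to grow like $t^{4/3}$ with quantitative control on the probability of small-velocity excursions (via Lemma \ref{last} for the discretized process $\xi_n$). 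The conversion of these scaling estimates into a sharp hitting bound in terms of $\rho$ is the technical heart of the argument and will require careful decomposition of $\overline X$-excursions into dyadic velocity shells.
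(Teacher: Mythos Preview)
Your plan diverges from the paper's proof in a fundamental way, and there is a genuine gap.

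The paper does \emph{not} pass to the limiting process $\overline{X}$. Instead it stays with $Y$ and proves a pointwise-in-time density estimate (Lemma~\ref{le1}): for fixed $x$ near $Y(\tau_n)$ and $t\geq 0$,
\[
\mathrm{P}\bigl(D^\delta_{v_0}\cap\{|Y(\tau_n+|v_n|^{3-\delta}+t)-x|\leq R'\}\,\big|\,\mathcal{G}_n\bigr)\leq (|v_n|^{3-\delta}+t)^{-\frac{4}{3}(d-2)-\varepsilon}.
\]
The key input is the \emph{rotation invariance of the force field}: one finds an intermediate time $\tau_{n+k}$ at which $Y$ is far from $x$ and the angle $\angle(Y(\tau_{n+k})-x,\,v_{n+k})$ is bounded away from $0$ and $\pi$; then any rotation fixing $Y(\tau_{n+k})$ and $v_{n+k}$ maps the process to one with the same law, and there are $\asymp r^{d-2}$ disjoint images of $B(x,R')$ on the orbit of $x$, whence the probability of landing in any one of them is $\lesssim r^{-(d-2)}\asymp t^{-\frac{4}{3}(d-2)}$. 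An extra $\varepsilon$ is squeezed out by also controlling the projection on the $v_{n+k}$ direction. The lemma is then proved by covering $\gamma_n$ with an $R'$-net and integrating the pointwise bound in $t$.

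Your route via the weak limit $\overline{X}$ has two problems. First, Lemma~\ref{ole} gives only weak convergence, with no rate; the event ``$\widehat Z_c$ hits $B_\rho(x)$'' with $\rho\to 0$ is not a continuity set, and in fact under your rescaling the target $A/c^4$ collapses to the origin, which is exactly the starting point of $\overline{X}$. The crucial information that $Z$ leaves the $3R$-neighborhood of $\gamma_n$ during the initial phase (the forward-cone geometry) is lost in the limit, so one cannot read off a hitting bound for $Z$ from a hitting bound for $\overline{X}$. You would need to work directly with $Y$ (or $Z$) and use the structure of the force field, as the paper does. Second, your target exponent $\rho^{d-3}$ yields only $c^{-4d+12+C\delta}$, which is the borderline case for $d=4$: the sum $\sum_n(|v_0|+n^{1/4})^{-4}$ diverges, so the $-\delta$ in the statement is essential and cannot be obtained by ``rescaling $\delta$''. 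The paper faces the same borderline issue and resolves it by the additional projection argument mentioned above; any alternative proof must likewise produce a strict gain beyond the scaling exponent.
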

Before we prove Lemma~\ref{nntt}, let us make several remarks
which will, in particular, allow us to deduce parts  (a), (b) and
(c) of Lemma~\ref{lh1} from Lemma~\ref{nntt}. For $x, v \in
\mathbb{R}^d$, let $K^+(x, v)$ and $K^-(x, v)$ be the cones
\[
K^+(x,v)  = \{y \in \mathbb{R}^d: (y-x, v) \geq \frac{3}{4}|y-x|
|v| \},
\]
\[
K^-(x,v)  = \{y \in \mathbb{R}^d: (y-x, -v) \geq \frac{3}{4}|y-x|
|v| \}.
\]
From the definition of $D^\delta_{v_0}$ it easily follows that
\[
{\mathrm{P}}\left( D^\delta_{v_0} \cap \bigcup_n (\{\gamma_n
\nsubseteq K^-(Y_n , v_n) \} \cup \{  \gamma_{n+1} \nsubseteq
K^+(Y_n , v_n) \})\right) \leq |v_0|^{-N}
\]
 if $|v_0|$ is sufficiently large. This
 implies that for each $0 < \delta < 1$
\begin{equation} \label{ede00}
{\mathrm{P}}\left( D^\delta_{v_0} \cap  \bigcup_n  \{\gamma^{2R}_n
\cap \gamma^R_{n+1} \nsubseteq B_{2R}(Y_n) \} \right) \leq
|v_0|^{-N}
\end{equation}
for all sufficiently large $|v_0|$. Take  $0 < \delta < 1$ such
that (\ref{ede})  holds. Let
\begin{equation} \label{omga}
{\Omega}_{v_0} = D^\delta_{v_0} \cap \{ \gamma^{2R}_n \cap
\Gamma^R_{n+1} = \emptyset ~~{\rm for}~{\rm all}~n  \} \cap \{
\gamma^{2R}_n \cap \gamma^R_{n+1} \subseteq B_{2R}(Y_n)~~{\rm
for}~{\rm all}~n  \}.
\end{equation}
In order to see that  parts (a) and (b) of Lemma~\ref{lh1} hold,
it remains to note that
\begin{equation}
\label{FinNSI}
 \lim_{|v_0| \rightarrow \infty} \sum_{n =1}^\infty
(|v_0| + n^{1/4})^{-4d + 12 - \delta} = 0
\end{equation}
if $d \geq 4$.  From the definition of $ D^\delta_{v_0}$ it
immediately follows that $\lim_{t \rightarrow \infty} |\dot{Y}(t)|
= \infty$ on ${\Omega}_{v_0}$. Furthermore, the trajectory $Y(t)$
cannot have limit points in $\mathbb{R}^d,$
as follows from the definition of ${\Omega}_{v_0}$.
Therefore, $\lim_{t \rightarrow \infty} |{Y}(t)| = \infty$, which
proves part (c) of Lemma~\ref{lh1}.

 Let us now return to Lemma \ref{nntt}.
 From the definition of $D^\delta_{v_0}$ it follows that if $\delta' >
0$, then $\gamma_n \subseteq B(Y(\tau_n), v_n^{\delta'}) $
 for all $n \geq
1$  if $\delta
> 0$ is sufficiently small and  $|v_0|$ is sufficiently large.
Let us represent $ \Gamma^R_{n+1} $ as follows
\[
\Gamma^R_{n+1}  = {\overline{\Gamma}}^{R}_{n+1}(\delta) \cup
\overline{\overline{\Gamma}}^R_{n+1} (\delta),
\]
where ${\overline{\Gamma}}^{R}_{n+1}(\delta)$ is the
$R$-neighborhood of ${\overline{\Gamma}}_{n+1}(\delta) = \{ Y(t),
\tau_{n+1} \leq t \leq \tau_n + |v_n|^{3 - \delta} \}$ and $
\overline{\overline{\Gamma}}^R_{n+1} (\delta)$ is the
$R$-neighborhood of $\overline{\overline{\Gamma}}_{n+1} (\delta) =
\{ Y(t),\tau_n + |v_n|^{3 - \delta} \leq t \leq \infty \}$.

Recall that the constant $l$ from the definition of the stopping
time $\tau_n$ is equal to $4R+1$. Since Lemma~\ref{ccf} is
obviously also applicable to the process $Z(t)$,
\[
\mathrm{P}(D^\delta_{v_0} \cap \{ {\rm dist} (K^-(Y_n, v_n),
{\overline{\Gamma}}_{n+1}(\delta) ) \leq 3 R\} ) \leq (|v_0| +
n^{1/4})^{-4d + 12 - \delta}
\]
if $\delta > 0$ is sufficiently small and $|v_0|$ is sufficiently
large. This implies (\ref{ede}) with
${\overline{\Gamma}}^{R}_{n+1}(\delta) $ instead of $
{\Gamma}^{R}_{n+1}$. Thus, Lemma~\ref{nntt} will follow if we
prove that
\begin{equation} \label{ede44}
{\mathrm{P}}\left( D^\delta_{v_0} \cap \gamma^{2R}_n \cap
\overline{\overline{\Gamma}}^R_{n+1} (\delta) \neq \emptyset
\right) \leq (|v_0| + n^{1/4})^{-4d + 12 - \delta}
\end{equation}
\begin{lemma} \label{le1} There exist $0 < \varepsilon < 1$ and $0 < \delta_0 < 1$ such that
for  each $0 < \delta, \delta' < \delta_0$ and $R'$ the inequality
\begin{equation} \label{oop}
{\mathrm{P}}\left(D^\delta_{v_0} \cap \{ |Y(\tau_n + |v_n|^{3 -
\delta} + t) - x| \leq R' \} | \mathcal{G}_{n} \right) \leq
(|v_n|^{3 - \delta} + t)^{-\frac{4}{3}(d-2) -\varepsilon }
\end{equation}
 holds for
all sufficiently large $|v_0|$ uniformly in $n \geq 0$, $x \in
B(Y(\tau_n), v_n^{\delta'})$ and $t \geq 0$.
\end{lemma}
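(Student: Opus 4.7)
The plan is to apply the Markov property (\ref{smp}) at the stopping time $\tau_n$ to reduce the conditional probability to an unconditional estimate for the auxiliary process $Z$ started with velocity $v_n$, and then to combine the diffusive scaling of Lemma~\ref{ole} with a density estimate for the limit process $\overline X^e:=\int_0^{\cdot}\overline V^e$.

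First, I would dominate the conditional probability of interest by $\mathrm{P}(|Y(\tau_n+T)-x|\le R'\mid\mathcal{G}_n)$, simply dropping the indicator $\chi_{D^\delta_{v_0}}$; the information lost is harmless because the definition of $D^\delta_{v_0}$ in Section~\ref{SSDerGr} forces $D^\delta_{v_0}\subset\{|v_n|\ge|v_0|^{1-\delta}\}$, so it suffices to work with $c:=|v_n|$ large. By (\ref{smp}), the dominating probability equals $\mathrm{P}(|Z(T)-y|\le R')$ with $T=c^{3-\delta}+t$, $y=x-Y_n\in B(0,c^{\delta'})$, and $Z$ started at $0$ with velocity $v_n$.

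Next, I would rescale: set $Z_c(s)=Z(c^3 s)/c^4$ and $s=T/c^3$. Lemma~\ref{ole} applied to $\dot Z(c^3\cdot)/c$, integrated in time, gives convergence in distribution $Z_c \to \overline X^e$ as $c\to\infty$ with $e=v_n/c$. I split according to whether $s\le 1$ or $s\ge 1$. In the ballistic range $s\le 1$, Lemma~\ref{ccf} applied to $Z$ in place of $Y$ shows $|Z(T)-v_nT|\le c^{4-3\delta/2}$ off an event of probability $c^{-N}$ for any $N$; since $|v_nT|\ge c^{4-\delta}$ dominates $|y|+c^{4-3\delta/2}$ for small $\delta,\delta'$, the event $\{|Z(T)-y|\le R'\}$ has probability at most $c^{-N}$, which trivially beats $T^{-4(d-2)/3-\varepsilon}$. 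In the diffusive range $s\ge 1$, the event becomes $\{|Z_c(s)-y/c^4|\le R'/c^4\}$, a ball centered near the origin whose center and radius both shrink to $0$; its probability is bounded by $(R'/c^4+c^{\delta'-4})^d$ times a uniform bound on the density of $\overline X^e(s)$ on $B(0,1)$. Translating back through $s=T/c^3$, a density bound of order $s^{-\alpha}$ with $\alpha=4(d-2)/3+\varepsilon$ converts, for $\delta,\delta'$ sufficiently small, to the claimed $T^{-4(d-2)/3-\varepsilon}$.

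The main obstacle is the density estimate for $\overline X^e(s)$ near the origin. To obtain it I would exploit the rotational symmetry of the law of $\overline V^e$ (averaged over $e$) together with the remark after Theorem~\ref{t1}, which records that $|\overline V|^{3/2}$ is a Bessel process of dimension $2d/3>2$, transient for $d>3$. Decomposing $\overline X^e(s)$ into a radial component governed by this Bessel process and an angular component that equidistributes on $S^{d-1}$ on time scale of order one, and accounting for the angular Jacobian, yields a Bessel-type density bound with the weaker exponent $4(d-2)/3$ rather than the $4d/3$ that a naive heat-kernel picture would suggest; this is precisely what the radial Bessel density produces, and via (\ref{FinNSI}) it is already enough to drive Lemma~\ref{nntt} and hence Theorem~\ref{t1}.
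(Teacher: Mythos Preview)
Your reduction to the auxiliary process $Z$ via (\ref{smp}) and your split into a ballistic range and a diffusive range are reasonable, and the ballistic case is indeed handled by Lemma~\ref{ccf} (this is exactly how the paper disposes of $t\le |v_n|^{3-\delta}$). The diffusive case, however, contains a genuine gap. You want to bound $\mathrm{P}(|Z_c(s)-y/c^4|\le R'/c^4)$, the probability that the \emph{pre-limit} process $Z_c$ lands in a ball of radius $R'/c^4\to 0$, and you propose to control this by (volume)$\times$(density of the \emph{limit} $\overline{X}^e(s)$). Lemma~\ref{ole} only gives weak convergence of $Z_c$ to $\overline{X}^e$; weak convergence says nothing whatsoever about the probability of shrinking balls. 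To make your argument work you would need a quantitative local limit theorem for $Z_c(s)$ with an explicit density bound, and nothing of the sort is available in the paper. The density discussion in your last paragraph is also off target: it is $|\overline{V}|^{3/2}$, not $|\overline{X}^e|$, that is a Bessel process, so a radial/angular decomposition of $\overline{X}^e$ is not governed by that Bessel process in any direct way.

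The paper avoids this problem entirely by arguing \emph{directly with the pre-limit process} and never invoking a density. It first locates an intermediate stopping time $\tau_{n+k}$ at which $Y$ is at distance $r\asymp t^{4/3}$ from $x$ and the velocity $v_{n+k}$ makes an angle in $[\pi/4,3\pi/4]$ with $Y(\tau_{n+k})-x$. Then it uses the \emph{rotational invariance of the force field} about the axis through $Y(\tau_{n+k})$ in the direction $v_{n+k}$: one can place $\gtrsim r^{d-2}$ disjoint balls $B(x_j,R')$ on the sphere of radius $r$, each the image of $B(x,R')$ under an isometry fixing $(Y(\tau_{n+k}),v_{n+k})$, and by symmetry the hitting probabilities coincide, so each is at most $r^{-(d-2)}\asymp t^{-\frac{4}{3}(d-2)}$. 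The extra $\varepsilon$ is then squeezed out by a separate one-dimensional estimate on the projection along $v_{n+k}$. This rotational-symmetry pigeonhole is the missing idea in your proposal; your route through the limiting density cannot be completed without a local limit theorem you do not have.
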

\proof In view of Lemma \ref{ccf} we can assume that
$t>|v_n|^{3-\delta}.$ Denote $\Tt=\tau_n + |v_n|^{3 - \delta} +
t.$ Let us first explain the proof of a weaker bound: for each
$\varepsilon > 0$ we have
\begin{equation}
\label{d5} {\mathrm{P}}\left(D^\delta_{v_0} \cap \{ |Y(\Tt) - x|
\leq R' \} | \mathcal{G}_{n} \right) \leq  t^{-\frac{4}{3}(d-2) +
\varepsilon }.
\end{equation}
This suffices for $d>4$ (see the proof of Lemma~\ref{nntt}). Then
we explain how to improve this estimate to get \eqref{oop}. The
proof of \eqref{d5} consists of two steps.

(I)  Fix $\varepsilon_1 > 0$. We show that if the intersection
does take place and $D^\delta_{v_0}$ takes place then with high
probability there exists a number $k$ such that
$\tau_n+t^{1-\varepsilon_1} \leq \tau_{n+k}\leq \Tt$ and the
following conditions are satisfied.

(A) $|Y(\tau_{n+k})-x|\geq t^{4/3-\varepsilon} ,$

(B) $\frac{\pi}{4} \leq \angle((Y(\tau_{n+k})-x), v_{n+k})\leq
\frac{3\pi}{4}. $

(II)  By step (I) it suffices to show that
\begin{equation}
\label{SmDen} \mathrm{P}\left(Y(\Tt)\in B(x, R')\text{ and (A) and
(B) hold} \right)\leq \Const t^{-\frac{4}{3}(d-2) + \varepsilon }
\end{equation}
 To prove \eqref{SmDen}, denote $r=|Y(\tau_{n+k})-x|$, let $\Pi$ be the plane passing through
$x$ orthogonal to $v_0$ and let $Pr$ denote the projection to
$\Pi.$ We can find a set $S=\{x_j\}$ of cardinality at least $c
r^{d-2}$ such that $x_1=x,$ the balls $B(x_j, R')$ are disjoint,
and for each $j$ there is an isometry $\cO_j$ leaving
$Y(\tau_{n+k})$ and $v_{n+k}$ fixed and such that
$\cO_j(x_j)=x_1.$ By the rotation invariance,
\begin{equation} \label{feight}
\mathrm{P}\left(Pr(Y(\Tt))\in B(x_1, R')\right)\leq
\frac{1}{\text{Card}(S)}
\end{equation}
 proving \eqref{SmDen}.

Thus to complete the proof of \eqref{d5} it remains to justify
step I. Observe that on $D_{v_0}^\delta$ we have $$
|Y(\tau_n+t^{1-\varepsilon_1})-Y(\tau_n)|\leq \Const
t^{(\frac{4}{3}-\eps_1)(1+\delta)} $$ $$
|Y(\Tt)-Y(\Tt-t^{1-\varepsilon_1})|\leq \Const
t^{(\frac{4}{3}-\eps_1)(1+\delta)} $$ On the other hand the
inequality  $\angle(\dot{Y}(s), (Y(s)-x))\leq \frac{\pi}{3}$ for
all $s\in [\tau_n+t^{1-\eps_1}, \Tt-t^{1-\eps_1}]$ would imply $$
\left||Y(\Tt-t^{1-\eps_1})-x|-|Y(\tau_n+t^{1-\eps_1})-x|\right|\geq
\Const t^{4/3(1-\delta)} $$ making intersection impossible if
$\eps_1>3\delta.$ Thus there exists $t_1\in [\tau_n+t^{1-\eps_1},
\Tt-t^{1-\eps_1}]$ such that $\angle(\dot{Y}(t_1),
(Y(t_1)-x))=\frac{\pi}{3}.$ Next with high probability the angle
changes less than $\frac{\pi}{12}$ on $[t_1,
t_1+t^{(1-\eps_1)(3-\delta)}].$ Thus the motion on this interval
is well approximated by a straight line and consequently there is
$t_2\in [t_1, t_1+t^{(1-\eps_1)(3-\delta)}]$ such that $$
|Y(t_2)-x|\geq \Const t^{(1-\eps_1)(3-\delta)} t^{1/3-\delta} . $$
Taking $k$ to be the first number such that $\tau_{n+k}>t_2$
establishes our claim.

Now let us now indicate how to prove the lemma in full generality.
We need to prove \eqref{SmDen} with $-\varepsilon$ instead of
$\varepsilon$ in the right-hand side.  In the arguments leading to
\eqref{SmDen} we only used the projection on the plane orthogonal
to $v_{n+k}$. Now we consider the projection of the process onto
the $v_{n+k}$ direction. During the time interval between
$\widetilde{t}- t^{1/10}$ and $\widetilde{t}$ the projection of
$\dot{Y}(s)$ can be well-approximated by a martingale, and as such
by a time-changed Brownian motion. The time-change is almost
linear on this small time interval, and thus the projection of
$Y(s)$ is approximated well by the integral of the Brownian
motion. This allows us to gain an extra factor of $t^{-2
\varepsilon}$. Observe that the derivation of \eqref{feight} only
involved rotation-invariance, and thus \eqref{feight} remains
valid if we replace the probability in the left-hand side by
conditional probability with the condition which involves the
projection of the process on the direction of $v_{n+k}$.
 \qed
\\
\\
{\it Proof of Lemma~\ref{nntt}.}
 Let
\[
 s^n_k(\delta) = \tau_{n-1} + k (|v_{n}| + n^{1/4})^{- 3 \delta},~
 k = 0,...,[(\tau_n - \tau_{n-1})(|v_{n}| + n^{1/4})^{ 3
\delta}].
\]
As follows from the definition of $D^\delta_{v_0}$, for each $R'$
these points form an $R'$-net in $\gamma_n$ if $|v_0|$ is
sufficiently large. By applying (\ref{oop}) to $x^n_k(\delta) =
Y(s^n_k(\delta))$, we obtain that
\[
{\mathrm{P}}\left(D^\delta_{v_0} \cap \{ {\rm dist}(Y(\tau_n +
|v_n|^{3 - \delta} + t), \gamma_n) \leq R' \} | \mathcal{G}_{n}
\right) \leq
\]
\[
\leq (|v_n|^{3 - \delta} + t)^{-\frac{4}{3}(d-2) -\varepsilon}
(|v_{n}| + n^{1/4})^{ 3 \delta}
\]
 holds for
all sufficiently large $|v_0|$ uniformly in $n \geq 0$ and $t \geq
0$.  Since $ | \dot{Y}(t)| \leq (|v_0| +t^{1/3})^{1 + \delta}$ on
$D^\delta_{v_0}$, and $R'$ was arbitrary,
\[
{\mathrm{P}}\left( D^\delta_{v_0} \cap \gamma^{2R}_n \cap
\overline{\overline{\Gamma}}^R_{n+1} (\delta) \neq \emptyset |
\mathcal{G}_{n} \right) \leq {\mathrm{P}}\left(D^\delta_{v_0} \cap
\{ {\rm dist}(\overline{\overline{\Gamma}}_{n+1} (\delta),
\gamma_n) \leq 3R \} | \mathcal{G}_{n} \right) \leq
\]
\begin{equation} \label{integral}
\leq \int_0^\infty (|v_n|^{3 - \delta} + t)^{-\frac{4}{3}(d-2)
-\varepsilon} (|v_{n}| + n^{1/4})^{ 3 \delta} (|v_0| +(\tau_n +
|v_n|^{3 - \delta} +t )^{1/3})^{1 + \delta} d t
\end{equation}
holds for all sufficiently large $|v_0|$ uniformly in $n \geq 0$.
It follows from the definition of $D^\delta_{v_0} $ that
\[
|v_n| \leq ( |v_0| + n^{1/4}) ^{1 + 3\delta}
\]
on  $D^\delta_{v_0} $  for all sufficiently large $|v_0|$. Recall
that
\[
\tau_n \leq (n |v_0|^{-1} + n^{3/4}) ^{1 + \delta}
\]
on  $D^\delta_{v_0} $  for all sufficiently large $|v_0|$. Since
$\varepsilon$ is fixed,  these estimates imply
that right-hand side of (\ref{integral}) can be made smaller than
the right-hand side of (\ref{ede44}) by taking a sufficiently
small $\delta$. \qed

\section{The Convergence in Distribution}
\label{YY}

Here we prove Lemma \ref{lh1}(d). Recall that ${\Omega}_{v_0}$ is
given by (\ref{omga}).

 For
fixed $v_0$,  let us prove that the family of processes
$\dot{Y}(c^3t)/c$ is tight, when restricted to the event
${\Omega}_{v_0}$. By the Arzela-Askoli Theorem, it is sufficient
to show that for each $T, \varepsilon, \eta > 0$ there are $c_0$
and $\varkappa > 0$ such that
\begin{equation}
\label{NUC} {\mathrm{P}} \left( {\Omega}_{v_0} \cap \{\sup_{0 \leq
s \leq t \leq T, t -s \leq \varkappa} |\dot{Y}(c^3t)/c  -
\dot{Y}(c^3s)/c|
> \varepsilon  \} \right) < \eta
\end{equation}
for $c \geq c_0$.


Let $T, \varepsilon, \eta > 0$ be fixed.
Let $n_*=n_*(\kappa,c)$ be the first time  when
$|\dot{Y}(\tau_n)|\geq \kappa c.$ Take $\kappa<{\eps}/{4}$. Define
$U_{\kappa, c}(t)=\dot{Y}(\tau_{n_*}+c^3 t)/c.$ By Lemma
\ref{ole}, there is $\varkappa>0$ such that
$$\mathrm{P}\left({\Omega}_{v_0} \cap \{\sup_{0 \leq s \leq t \leq
T, t -s \leq \varkappa}  |U_{\kappa, c}(t)-U_{\kappa, c}(s)| >
\frac{\eps}{2} \} \right)<\eta $$ for large $c.$ Now \eqref{NUC}
follows easily.

From Lemma \ref{last}, the definition of $D^\delta_{v_0}$, and the
tightness established above it follows that for each $T,
\varepsilon, \eta > 0$ there is $\kappa>0$ such that
\[
 \mathrm{P}\left({\Omega}_{v_0} \cap \{ \sup_{t \in [0,T]}
\left|U_{\kappa, c}(t)-\frac{\dot{Y}(tc^3)}{c}\right|\geq
\varepsilon \} \right) =
\]
\[
\mathrm{P}\left({\Omega}_{v_0} \cap \{ \sup_{t \in [0,T]}
\left|\frac{\dot{Y}(\tau_{n_*}+
tc^3)}{c}-\frac{\dot{Y}(tc^3)}{c}\right|\geq \varepsilon \}
\right) <\eta
\]
 for all sufficiently large $c$.
Likewise, if $\brtau_\kappa$ is the first time when
$|\overline{V}(\brtau)|=\kappa$, define
$\overline{U}_\kappa(t)=\overline{V}(\brtau_\kappa+t).$ Then for
each $T, \varepsilon, \eta > 0$ there is $\kappa>0$ such that $$
\mathrm{P}\left(\sup_{t \in [0,T]}
\left|\overline{U}_{\kappa}(t)-\overline{V}(t)\right|\geq
\varepsilon\right)<\eta. $$ Finally, from Lemma \ref{ole} and the
definition of $ {\Omega}_{v_0}$ it follows that the distribution
of
 $U_{\kappa, c}$, considered over the space ${\Omega}_{v_0}$ with
 the normalized measure,
  is close to the distribution of $\overline{U}_\kappa$  if
$c$ is large enough. This completes the proof of Lemma \ref{lh1}.

\section{Appendix}

Here we sketch the proof of Lemma~\ref{nlnl}. Note that it is
sufficient to prove (\ref{star}), since (\ref{bb5}) follows from
(\ref{star}) in the same way as Corollary~\ref{fttt} follows from
Lemma~\ref{ftt01}. We use the same notations as in the proof of
Lemma~\ref{ftt01}. It is clear that $ \eta_{1} \leq T_0 =
|v_0|^\alpha$ with high probability. Therefore, due to
\eqref{teyy} and \eqref{bb3} it suffices to show that $$
\mathrm{E}\left(\left|\int_{\tau_1}^{\eta_1} \tilde F_0(z(s_0)) ds
\right| \chi_{ \{\max(\tau_1, \eta_1) \leq T_0 \} } \right)\leq
|v_0|^{-3+\delta}$$ for all sufficiently large $|v_0|$. Since
$\tilde F$ is a Poisson field, the problem is reduced to showing
that
for each $\delta > 0$ one can choose $\alpha > 0$ such that
\[
\mathrm{E}\left(\left|\tau_1 - \eta_1\right|
\chi_{ \{\max(\tau_1, \eta_1) \leq T_0 \} } \right)
\leq |v_0|^{-3 + \delta}
\]
for all sufficiently large $|v_0|$.
We shall only prove that
\begin{equation} \label{etau}
\mathrm{E}((\tau_1 - \eta_1)^+ \chi_{ \{\max(\tau_1, \eta_1) \leq
T_0 \} }) \leq |v_0|^{-3 + \delta}
\end{equation}
since the inequality with $\eta_1 - \tau_1$ instead of $\tau_1 -
\eta_1$ can be proved similarly.

Let $\gamma > 0$ and $0 \leq q \leq 2$. We shall specify these
constants later.  For simplicity of notation, assume that $v_0$ is
directed along the $x_1$-axis, in the positive direction.  Let
$S_{q, \gamma}$ and $S^+_{q, \gamma}$ be the following random
sets:
\[
S_{q, \gamma} = \{x \in \mathbb{R}^d: {\rm dist}(x, z_0(\eta_1))
\geq 2R,~ v_0 \eta_1 - 2R \leq x_1 \leq v_0 \eta_1,
\]
\[
2R - |v_0|^{-q + \gamma} \leq \sqrt{x_2^2+...x_d^2} \leq 2R -
|v_0|^{-q} +  |v_0|^{-2} \},
\]
\[
S^+_{q, \gamma} = \{x \in \mathbb{R}^d:  {\rm dist}(x,
z_0(\eta_1)) \geq 2R,~v_0 \eta_1 - 2R \leq x_1 \leq v_0 \eta_1,
\]
\[
2R - |v_0|^{-q} +  |v_0|^{-2} < \sqrt{x_2^2+...x_d^2} \leq 2R\}.
\]

Let $\Gamma_{q, \gamma}$ be the following random set:
\[
\Gamma_{q, \gamma} = \{x \in \mathbb{R}^d: x_1 = v_0 \eta_1 +
|v_0|^{-2 + 2 \gamma + \frac{q}{2}}, \sqrt{x_2^2+...x_d^2} \leq
|v_0|^{-2 + \gamma} \}.
\]
Let $U_{q, \gamma}$ be the following random set:
\[
U_{q, \gamma} = \{x \in \mathbb{R}^d:  {\rm dist}(x, z_0(\eta_1))
\geq 2R,~{\rm dist}(x, \Gamma_{q, \gamma} ) \leq 2R \}.
\]
 \begin{center}
   \includegraphics[angle=0, scale=1.0]{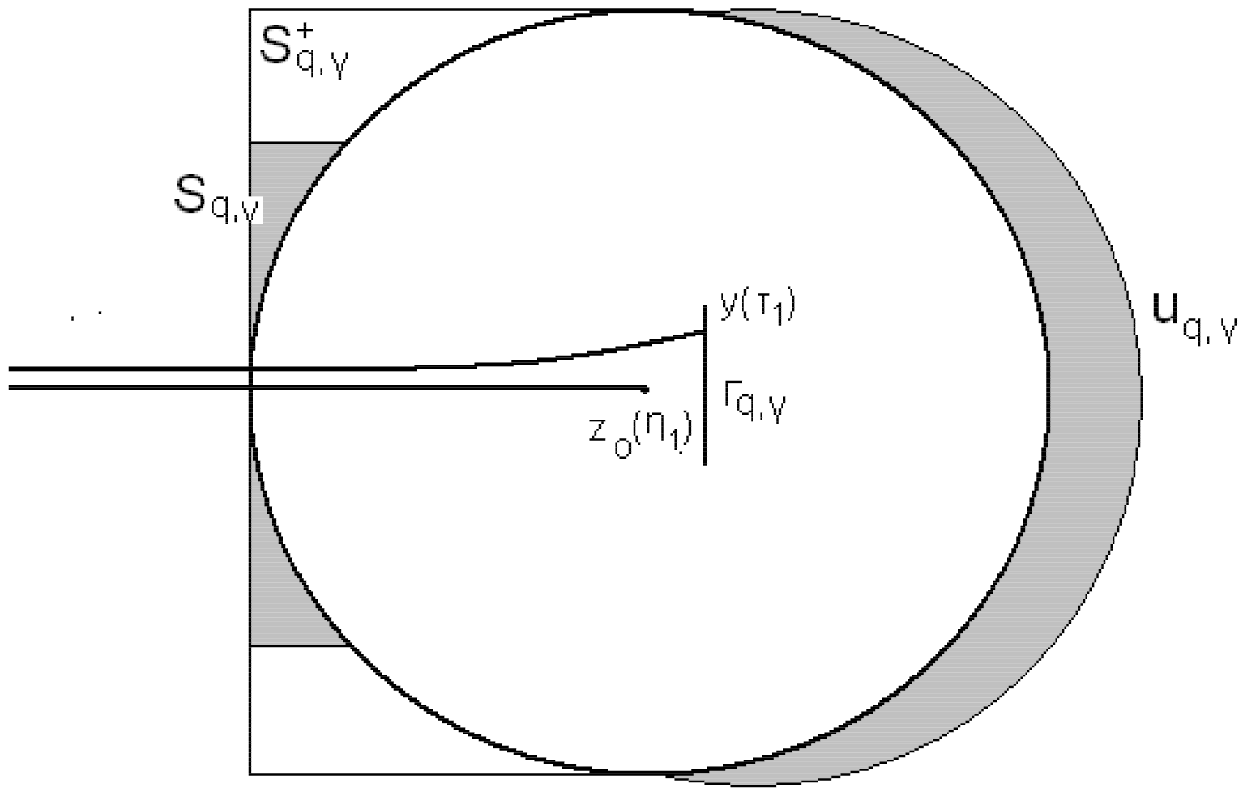}
   \end{center}
Let $ \mathcal{E}^S_{q, \gamma}$ be the event that at least one of
the points $r_1,r_2,...$ belongs to $ S_{q, \gamma}$ but none
belong to $ S^+_{q, \gamma}$. Let $ \mathcal{E}^U_{q, \gamma}$ be
the event that at least one of the points $r_1,r_2,...$ belongs to
$ U_{q, \gamma}$. Let $A$ be a point on the semi-axis $\{x \in
\mathbb{R}^d: x_1 \geq 0, x_2 = ... = x_d = 0 \}$. Note that $
\mathcal{E}^S_{q, \gamma}$ and $ \mathcal{E}^U_{q, \gamma}$ are
independent when conditioned on $\{z_0 (\eta_1) = A \}$. The
respective conditional probabilities can be estimated from above
by $|v_0|^{-q + 2 \gamma}$ and $|v_0|^{-2 + 3 \gamma +
\frac{q}{2}}$ for all sufficiently large $|v_0|$. Therefore, $
\mathrm{P}( \mathcal{E}^S_{q, \gamma} \cap \mathcal{E}^U_{q,
\gamma}) \leq |v_0|^{-2 + 5 \gamma - \frac{q}{2}}$.

Let us examine the contribution to the expectation (\ref{etau})
from the event $ \mathcal{E}^S_{q, \gamma}$. First,
\[
\mathrm{E}(\chi_{\mathcal{E}^S_{q, \gamma} \cap \mathcal{E}^U_{q,
\gamma}} (\tau_1 - \eta_1)^+ \chi_{ \{\max(\tau_1, \eta_1) \leq
T_0 \} }) \leq T_0 \mathrm{P}( \mathcal{E}^S_{q, \gamma} \cap
\mathcal{E}^U_{q, \gamma}) \leq |v_0|^{\alpha -2 + 5 \gamma -
\frac{q}{2}}.
\]
Note that the power $\alpha -2 + 5 \gamma - \frac{q}{2}$ can be
made less than $-3+ \delta$ by selecting small $\gamma$ and
$\alpha$ close to $-1$. Next, note that with high probability the
trajectory $y(t)$ reaches the set $\Gamma_{q, \gamma}$ between
times $\eta_1$ and $\eta_1 + |v_0|^{-3 + 3 \gamma + \frac{q}{2}}$
due to the proximity of $y(t)$ and $z_0(t)$. Note that the
distance between  $\Gamma_{q, \gamma}$  and $S^+_{q, \gamma}$ is
greater than $2R$.  Therefore, on $\mathcal{E}^S_{q, \gamma}
\setminus \mathcal{E}^U_{q, \gamma}$, none of the points
$r_1,r_2,...$ belongs to the $2R$-neighborhood of the point where
$y(t)$ first intersects $\Gamma_{q, \gamma}$. Therefore, for each
$N
> 0$,
\[
\mathrm{E}(\chi_{\mathcal{E}^S_{q, \gamma} \setminus
\mathcal{E}^U_{q, \gamma}} (\tau_1 - \eta_1)^+ \chi_{
\{\max(\tau_1, \eta_1) \leq T_0 \} }) \leq |v_0|^{-3 + 3 \gamma +
\frac{q}{2}} \mathrm{P}( \mathcal{E}^S_{q, \gamma} ) + |v_0|^{-N}
\leq |v_0|^{ -3 + 6 \gamma - \frac{q}{2}}.
\]
Again, the power $ -3 + 6 \gamma - \frac{q}{2}$ can be made less
than $-3+ \delta$ by selecting small $\gamma$. We have thus
obtained that
\begin{equation} \label{eachq}
\mathrm{E}(\chi_{\mathcal{E}^S_{q, \gamma} } (\tau_1 - \eta_1)^+
\chi_{ \{\max(\tau_1, \eta_1) \leq T_0 \} })  \leq |v_0|^{-3 +
\delta}.
\end{equation}
Note that for fixed $\gamma$ one can find finitely many numbers
$q_1,...,q_n \in [0,2]$ such that $ \mathrm{P}(\bigcup_{i=1}^n
\mathcal{E}^S_{q_i, \gamma}) =1$. Therefore, (\ref{eachq}) implies
(\ref{etau}).

\end{document}